\def\mapright#1{\smash{
\mathop{\rg}\limits^{#1}}}
\def\mapdown#1{\bigg\downarrow
\rlap{$\vcenter{\hbox{$\scriptstyle#1$}}$}}
\def\rg{\hbox to 30pt{\rightarrowfill}}
\def\lg{\hbox to 30pt{\leftarrowfill}}
          \newtheorem{theorem}{Theorem}[section]
      \newtheorem{proposition}[theorem]{Proposition}
      \newtheorem{corollary}[theorem]{Corollary}
      \newtheorem{lemma}[theorem]{Lemma}
      \newcommand{\BB}{{\mathbb B}}
      \newcommand{\CC}{{\mathbb C}}
      \newcommand{\NN}{{\mathbb N}}
      \newcommand{\HH}{{\mathbb H}}
      \newcommand{\FF}{{\mathbb F}}
      \newcommand{\TT}{{\mathbb T}}
      \newcommand{\cA}{{\mathcal A}}
      \newcommand{\cC}{{\mathcal C}}
      \newcommand{\cD}{{\mathcal D}}
      \newcommand{\cE}{{\mathcal E}}
      \newcommand{\cF}{{\mathcal F}}
      \newcommand{\cG}{{\mathcal G}}
      \newcommand{\cH}{{\mathcal H}}
      \newcommand{\cI}{{\mathcal I}}
      \newcommand{\cK}{{\mathcal K}}
      \newcommand{\cM}{{\mathcal M}}
      \newcommand{\cN}{{\mathcal N}}
      \newcommand{\cP}{{\mathcal P}}
      \newcommand{\cR}{{\mathcal R}}
      \newcommand{\cS}{{\mathcal S}}
      \newcommand{\cV}{{\mathcal V}}
      \newcommand{\cY}{{\mathcal Y}}
      \newdimen\expt
      \def\boxit#1{\setbox0\hbox{$\displaystyle{#1}$}
            \hbox{\lower.4\expt
       \hbox{\lower3\expt\hbox{\lower\dp0
            \hbox{\vbox{\hrule height.4\expt
       \hbox{\vrule width.4\expt\hskip3\expt
            \vbox{\vskip3\expt\box0\vskip2\expt}%
       \hskip3\expt\vrule width.4\expt}\hrule height.4\expt}}}}}}
\begin{document}
       \pagestyle{myheadings}
      \markboth{ Gelu Popescu}{    Free biholomorphic functions and operator model theory II }

      \title [ Free biholomorphic functions and operator model theory, II ]
{ Free biholomorphic functions and operator model theory, II }
        \author{Gelu Popescu}
\date{November 11, 2011}
      \thanks{Research supported in part by an NSF grant}
      \subjclass[2000]{Primary: 46L52; 47A45   Secondary: 47A20; 46T25; 46L07}
      \keywords{ Formal power series; Free holomorphic
      function;     Model theory; Dilation theory;
       Noncommutative Hardy
      space; Fock space;
             Noncommutative variety; Poisson transform; Characteristic
             function; Invariant subspace;
      Commutant lifting.}

      \address{Department of Mathematics, The University of Texas
      at San Antonio \\ San Antonio, TX 78249, USA}
      \email{\tt gelu.popescu@utsa.edu}

\bigskip

\begin{abstract}

In a companion to this paper, we introduced the class of $n$-tuples
$f=(f_1,\ldots, f_n)$  of formal power series in noncommutative
indeterminates $Z_1,\ldots, Z_n$  with  the {\it model property} and
developed an operator model theory for {\it pure} $n$-tuples of
operators in noncommutative domains $\BB_f(\cH)\subset B(\cH)^n$,
where the associated universal model is an $n$-tuple
$(M_{Z_1},\ldots, M_{Z_n})$ of multiplication operators on a Hilbert
space $\HH^2(f)$ of formal powers series.

In the present paper, several results concerning the noncommutative
multivariable operator theory on
 the unit ball  $[B(\cH)^n]_1^-$  are extended to
 noncommutative
varieties   $\cV_{f,J}(\cH)\subseteq \BB_f(\cH)$ defined by
$$
\cV_{f,J}(\cH):=\left\{ (T_1,\ldots, T_n)\in \BB_f(\cH):\
\psi(T_1,\ldots,T_n)=0 \ \text{ for any } \ \psi\in J\right\},
$$
for an appropriate evaluation $\psi(T_1,\ldots,T_n)$, and associated
with    $n$-tuples $f$ with the model property and  WOT-closed
two-sided ideals $J$ of the Hardy algebra $H^\infty(\BB_f)$, the
WOT-closure of all
  noncommutative polynomials in $M_{Z_1},\ldots, M_{Z_n}$ and the
  identity. We develop an operator model theory and dilation
theory for $\cV_{f,J} (\cH)$,
  where the associated universal model  is  an $n$-tuple $(B_1,\ldots, B_n)$ of
     operators  acting on a Hilbert space $\cN_{f, J}$
  of formal power series. We  study the representations of the algebras
    generated by $B_1,\ldots, B_n$  and the identity: the variety algebra $\cA(\cV_{f,J})$,
    the Hardy algebra $H^\infty(\cV_{f,J}))$, and
    the $C^*$-algebra $C^*(B_1,\ldots, B_n)$. A constrained characteristic function $\Theta_{f,X,J}$,
      associated with each  $n$-tuple
  $X\in \cV_{f,J} (\cH)$,
  is used  to provide an operator
   model  for   the  class of
    completely non-coisometric (c.n.c) elements in the noncommutative variety
    $\cV_{f,J} (\cH)$. As a consequence, we show that $\Theta_{f,X,J}$ is a complete unitary invariant
    for the c.n.c. part of $\cV_{f,J} (\cH)$.  A  Beurling type theorem characterizing the joint invariant
   subspaces  under $B_1,\ldots, B_n$ and a  commutant lifting for  pure  $n$-tuples of operators in
$\cV_{f,J}(\cH)$ is also provided. In particular, when $J$ is the
$WOT$-closed two-sided ideal generated by the commutators $
M_{Z_i}M_{Z_j}-M_{Z_j}M_{Z_i}$, $i,j\in \{1,\ldots, n\}$, we obtain
commutative versions for all the results.

For special  classes of  $n$-tuples of formal power series
  $f=(f_1,\ldots, f_n)$ and $J=\{0\}$, we  obtain several results
  regarding the dilation and model theory for the noncommutative
  domain $\BB_f(\cH)$ or the c.n.c. part of it.

\end{abstract}

      \maketitle

\bigskip

\bigskip
\bigskip





\bigskip

\section*{Introduction}

This paper is  a continuation of \cite{Po-multi} in our  attempt to
to {\it transfer}
   the free analogue of Nagy-Foia\c s theory (see \cite{SzF-book}, \cite{F},
\cite{B}, \cite{Po-isometric}, \cite{Po-charact}, \cite{Po-von},
\cite{Po-funct}, \cite{Po-analytic}, \cite{Po-disc},
\cite{Po-interpo}, \cite{Po-poisson}, \cite{Po-curvature},
 \cite{Po-entropy}, \cite{Po-varieties},
\cite{Po-varieties2}, \cite{Po-unitary}, \cite{Po-domains},
\cite{Po-multi},
 \cite{Arv},  \cite{BT1},   \cite{BES1}) from
the  closed  unit ball
$$
[B(\cH)^n]_1^-:=\{(X_1,\ldots, X_n)\in B(\cH)^n: \ X_1X_1^*+\cdots
+ X_nX_n^*\leq  I\}
$$
 to other
    noncommutative domains in $B(\cH)^n$, where $B(\cH)$ is the algebra
     of bounded linear operators  an a
Hilbert space $\cH$. More precisely, we want to find large classes
$\cG$  of free holomorphic
 functions $g :\Omega\subseteq [B(\cH)^n]_1^-\to B(\cH)^n$ for which  a reasonable
  operator model theory  and dilation theory  can be developed  for
   the   noncommutative domain    $g(\Omega)$.

 Section 1  contains some preliminaries on  the class $\cM$ of $n$-tuples  of formal
power series $f=(f_1,\ldots,f_n)$ with the {\it model property}. An
$n$-tuple  $f$ has the model property if it is either one of the
following: an $n$-tuple of polynomials with  property $(\cA)$,
 an $n$-tuple of  formal power series  with  $f(0)=0$ and    property
 $(\mathcal{S})$, or
  an $n$-tuple of  free holomorphic functions  with property $(\mathcal{F})$.
  We associate with each $f\in \cM$ a Hilbert space
$\HH^2(f)$ of formal power series and the noncommutative domain
$$
\BB_f(\cH):=\{X=(X_1,\ldots, X_n)\in B(\cH)^n: \ g(f(X))=X \text{
and } \|f(X)\|\leq 1\},
$$
where $g=(g_1,\ldots, g_n)$ is the inverse   of $f$ with respect to
the composition of power series, and the evaluations are
well-defined.

 The
characteristic  function  of  an $n$-tuple $T=(T_1,\ldots, T_n)$ in
the noncommutative domain $\BB_f(\cH)$
  is
  the  operator   $ \Theta_{f,T}:\HH^2(f)\otimes \cD_{f,T^*}\to
\HH^2(f)\otimes \cD_{f,T}$ having  the formal Fourier representation
\begin{equation*}
\begin{split}
  -I_{\HH^2(f)}\otimes f(T)+
\left(I_{\HH^2(f)}\otimes \Delta_{f,T}\right)&\left(I_{\HH^2(f)\otimes \cH}-\sum_{i=1}^n \Lambda_i\otimes f_i(T)^*\right)^{-1}\\
&\quad\qquad\qquad\left[\Lambda_1\otimes I_\cH,\ldots, \Lambda_n\otimes I_\cH \right]
\left(I_{\HH^2(f)}\otimes \Delta_{f,T^*}\right),
\end{split}
\end{equation*}
where $\Lambda_1,\ldots, \Lambda_n$ are the right multiplication
operators  by the power series $f_i$  on the Hardy space $\HH^2(f)$
and $\Delta_{f,T}$, $\Delta_{f,T^*}$ are certain   defect operators,
while  $\cD_{f,T}$,   $\cD_{f,T^*}$ are  the corresponding  defect
spaces  associated with $T\in \BB_f(\cH)$. We remark that the
characteristic  function  is a contractive multi-analytic operator
with respect to
  the
universal model $(M_{Z_1},\ldots, M_{Z_n})$ associated with  the
noncommutative domain $\BB_f$.

In Section 2, we   present  operator models
  for  completely non-coisometric (c.n.c.) $n$-tuples of operators $T:=(T_1,\ldots, T_n)$
   in   noncommutative
  domains
  $\BB_f(\cH)$, generated by by an $n$-tuples of formal power series
  $f=(f_1,\ldots, f_n)$  of class $\cM^b$,
in which the characteristic function $\Theta_{f,T}$ occurs explicitly. More
precisely, we show that  $T:=(T_1,\ldots, T_n)$   is unitarily
equivalent to a c.n.c.
 $n$-tuple ${\bf T}:=({\bf T}_1,\ldots, {\bf T}_n)$ of operators in $\BB_f ({\bf H})$
 on the Hilbert space
$$
{\bf H}:=[ (\HH^2(f)\otimes \cD_{f,T
})\oplus\overline{\Delta_{\Theta_{f,T}} (\HH^2(f)\otimes
\cD_{f,T^*})}]\ominus
\{\Theta_{f,T} x\oplus\Delta_{\Theta_{f,T}} x:\ \ x\in
\HH^2(f)\otimes \cD_{f,T^*}\},
$$
where $~\Delta_{\Theta_{f,T}}:=(I-\Theta_{f,T}^*\Theta_{f,T})^{1/2}$
and
  the operator ${\bf T}_i$  is defined by
$$
{\bf T}_{i}^*[x\oplus\Delta_{\Theta_{f,T}}   y]:= (M_{Z_i}^*\otimes
I_{\cD_{f,T}}) x\oplus D_i^*(\Delta_{\Theta_{f,T}}y), \qquad
i=1,\ldots,n,
$$
for   $x\in \HH^2(f)\otimes \cD_{f,T} $,  $y\in \HH^2(f)\otimes
\cD_{f,T^*}$,   where
$D_i(\Delta_{\Theta_{f,T}}y):=\Delta_{\Theta_{f,T}} (M_{Z_i} \otimes
I_{\cD_{f,T^*}}) y$. Moreover, $~T$ is a pure  $n$-tuple of
operators in $\BB_f(\cH)$ if and only  if
 the characteristic function  $~\Theta_{f,T}~$ is an isometry.
  In this  case, the model reduces to
$$
{\bf H}=\left(\HH^2(f)\otimes \cD_{f,T}\right)\ominus
\Theta_{f,T}(\HH^2(f)\otimes \cD_{f,T^*}),\qquad{\bf T}_{i}^*
x=(M_{Z_i}^*\otimes I_{\cD_{f,T}}) x,\qquad
x\in \bf H.
$$
This   result is used to show that the characteristic function is a
complete unitary invariant for the c.n.c. $n$-tuples of operators in
$\BB_f(\cH)$.
 We
also show that
 any contractive multi-analytic operator with respect to
 $M_{Z_1},\ldots, M_{Z_n}$
   generates a c.n.c.
$n$-tuple of operators  in $\BB_f( {\bf H})$, for an appropriate
Hilbert space ${\bf H}$.

In Section 3, under natural conditions on the $n$-tuple
$f=(f_1,\ldots, f_n)$, we study the $*$-representations of the
$C^*$-algebra $C^*(M_{Z_1},\ldots, M_{Z_n})$ and obtain a Wold type
decomposition for the nondegenerate $*$-representations, where
$(M_{Z_1},\ldots, M_{Z_n})$  is  the universal model associated with
the noncommutative domain $\BB_f$.  We  also show that any $n$-tuple
$T=(T_1,\ldots,T_n)$ of operators
 in the noncommutative domain
 $\BB_f(\cH)$  has  a minimal dilation which is unique up to an
 isomorphism, i.e.,
there is an $n$-tuple $V:=(V_1,\ldots, V_n)$ of operators on a
Hilbert space $\cK\supseteq \cH$ such that
\begin{enumerate}
\item[(i)] $(V_1,\ldots, V_n)\in \BB_f(\cK)$;
\item[(ii)] there  is a $*$-representation
$\pi:C^*(M_{Z_1},\ldots, M_{Z_n})\to B(\cK)$ such that
$\pi(M_{Z_i})=V_i$, $i=1,\ldots,n$;
\item[(iii)]$V_i^*|_\cH=T_i^*$, $i=1,\ldots,n$;
\item[(iv)] $\cK=\bigvee_{\alpha\in \FF_n^+} V_\alpha \cH$.
\end{enumerate}
 A commutant lifting theorem for $\BB_f(\cH)$ (see
Theorem \ref{CLT}) is also provided.

If  $f=(f_1,\ldots, f_n)$ has the model property, we introduce the
Hardy algebra $H^\infty(\BB_f)$ to be  the WOT-closure of all
  noncommutative polynomials in $M_{Z_1},\ldots, M_{Z_n}$ and the
  identity, where
 $(M_{Z_1},\ldots, M_{Z_n})$  is the universal model associated with the
noncommutative domain $\BB_f$. In Section 4, we extend the model
theory   to c.n.c. $n$-tuples of operators  in
 noncommutative
varieties    defined by
$$
\cV_{f,J}(\cH):=\left\{ (T_1,\ldots, T_n)\in \BB_f(\cH):\
\psi(T_1,\ldots,T_n)=0 \ \text{ for any } \ \psi\in J\right\},
$$
for an appropriate evaluation $\psi(T_1,\ldots,T_n)$, and associated
with    $n$-tuples $f$ with the model property and  WOT-closed
two-sided ideals $J$ of the Hardy algebra $H^\infty(\BB_f)$. We also
show that the constrained characteristic function $\Theta_{f,T,J}$
is a complete unitary invariant for  the c.n.c. part of $\cV_{f,J}
(\cH)$.

In  Section 5, we develop a dilation theory for $n$-tuples of
operators $(T_1,\ldots,T_n)$ in the noncommutative domain
$\BB_f(\cH)$, subject to constraints such as
$$(q\circ
f)(T_1,\ldots,T_n)=0,\qquad q\in \cP,$$
 where $\cP$ is a set of
homogeneous noncommutative polynomials.  We show that if
$f=(f_1,\ldots, f_n)$ is an $n$-tuple of formal power series with
the radial  approximation property  and let $B=(B_1,\ldots, B_n)$ be
the universal model
  associated with   the WOT-closed two-sided ideal $J_{\cP\circ f}$
   generated by $q(f(M_Z))$, $q\in \cP$, in
$H^\infty(\BB_f)$, then
    the linear map $\Psi_{f,T,\cP}:\overline{\text{\rm
span}}\{{B}_\alpha { B}_\beta:\ \alpha,\beta\in \FF_n^+\}\to B(\cH)
$ defined by
$$\Psi_{f,T,\cP}({ B}_\alpha { B}_\beta):=T_\alpha T_\beta^*,\qquad \alpha,\beta\in \FF_n^+,
$$
is completely contractive. If  $\cH$ is a separable Hilbert space,
we prove that
  there exists a separable Hilbert space $\cK_\pi$ and a
$*$-representation $\pi:C^*(B_1,\ldots, B_n)\to B(\cK_\pi)$ which
annihilates the compact operators and
$$
\sum_{i=1}^n f_i(\pi(B_1),\ldots, \pi(B_n))f_i(\pi(B_1),\ldots,
\pi(B_n))^*= I_{ \cK_\pi},
$$
such that
\begin{enumerate}
\item[(a)]
$\cH$ can be identified with a $*$-cyclic co-invariant subspace of
$\tilde\cK:=(\cN_{f,J_{\cP\circ f}}\otimes
\overline{\Delta_{f,T}\cH})\oplus \cK_\pi$ under the operators
$$
V_i:=\left[\begin{matrix} B_i\otimes
I_{\overline{\Delta_{f,T}\cH}}&0\\0&\pi(B_i)
\end{matrix}\right],\quad i=1,\ldots,n;
$$
\item[(b)]
$ T_i^*=V_i^*|\cH,\  i=1,\ldots, n$;
\item[(c)] $V:=(V_1,\ldots,V_n)\in \BB_f(\widetilde \cK)$ and
$ (q\circ f)(V)=0$, \  $q\in \cP.$
\end{enumerate}

In Section 6, under the conditions  that $f=(f_1,\ldots, f_n)$ is an
$n$-tuple of power series with the model property and
  $J $ is a WOT-closed two-sided ideal of the
  Hardy algebra $H^\infty(\BB_f)$,  we provide  a Beurling \cite{Be}
type theorem characterizing the invariant subspaces under the
universal $n$-tuple  $(B_1,\ldots,B_n)$ associated with a
noncommutative variety $\cV_{f,J}(\cH)$, and a commutant lifting
theorem \cite{SzF-book} for pure $n$-tuples of operators in
$\cV_{f,J} (\cH)$.

 We
remark that all the results  of this paper   have commutative
versions which can be obtained when  $J$ is the $WOT$-closed
two-sided ideal generated by the commutators $
M_{Z_i}M_{Z_j}-M_{Z_j}M_{Z_i}$, $i,j\in \{1,\ldots, n\}$.  In this
case, if $T:=(T_1,\ldots, T_n)\in \BB_f(\cH)$ is  such that
$$
T_iT_j=T_jT_i, \qquad i,j=1,\ldots, n,
$$
then
 the  characteristic
function of $T$   can be identified with  the multiplier
$M_{\Theta_{f,J,T}}:\HH^2(g({\bf B}_n))\otimes \cD_{f,T^*}\to
\HH^2(g({\bf B}_n))\otimes \cD_{f,T} $ defined by    the operator-valued
analytic function
\begin{equation*}
\begin{split}
\Theta_{f,J,T}(z):=
  - f(T)+
 \Delta_{f,T}\left(I-\sum_{i=1}^n f_i(z) f_i(T)^*\right)^{-1}
\left[f_1(z)I_\cH,\ldots, f_n(z) I_\cH \right] \Delta_{f,T^*},\qquad
z\in g(\BB_n),
\end{split}
\end{equation*}
where
$H^2(g({\bf B}_n))$  is a reproducing kernel  Hilbert space of holomorphic functions on $g({\bf B}_n)$,
${\bf B}_n$ is the open unit ball of $\CC^n$,   and $g=(g_1,\ldots, g_n)$ is the inverse of $f$ with
respect to the composition.

It  would be interesting to see to what extent the results of this
paper and \cite{Po-multi} can be extended to the Muhly-Solel setting
of tensor algebras over $C^*$-correspondences (\cite{MuSo1},
\cite{MuSo2}, \cite{MuSo3}).

\bigskip

\section{  Hilbert spaces  of  formal power series and noncommutative domains }

In this section,  we   recall (see \cite{Po-multi}) some basic facts
regarding the Hilbert spaces  $\HH^2(f)$  and the  noncommutative
domains $\BB_f(\cH)$     associated with $n$-tuples  of formal power
series $f=(f_1,\ldots,f_n)$ with the {\it model property}.

Let  $\FF_n^+$ be the free semigroup with $n$ generators
$g_1,\ldots, g_n$ and the identity $g_0$.  The length of $\alpha\in
\FF_n^+$ is defined by $|\alpha|:=0$ if $\alpha=g_0$ and
$|\alpha|:=k$ if
 $\alpha=g_{i_1}\cdots g_{i_k}$, where $i_1,\ldots, i_k\in \{1,\ldots, n\}$.
 Let
$\CC[Z_1,\ldots, Z_n]$ be the algebra of noncommutative polynomials
 with complex coefficients and noncommuting indeterminates
$Z_1,\ldots, Z_n$. We  say that an $n$-tuple $p=(p_1,\ldots, p_n)$
of
   polynomials
 is invertible in $\CC[Z_1,\ldots, Z_n]^n$  with respect to the
composition if
  there exists an
 $n$-tuple $q=(q_1,\ldots, q_n)$ of
  polynomials such that
 \begin{equation*}
      p\circ q=q\circ p=id .
 \end{equation*}
In this case, we say that $p=(p_1,\ldots, p_n)$ has property
$(\cA)$. We introduce an inner product on $\CC[Z_1,\ldots,Z_n]$ by
setting $\left< p_\alpha, p_\beta\right>:=\delta_{\alpha \beta}$,
$\alpha,\beta\in \FF_n^+$, where $p_\alpha:=p_{i_1}\cdots p_{i_k}$
if $\alpha=g_{i_1}\cdots g_{i_k}\in \FF_n^+$, and $p_{g_0}:=1$. Let
$\HH^2(p)$ be the completion of the linear span  of the
noncommutative polynomials $ p_\alpha$, $\alpha\in \FF_n^+$, with
respect to this inner product.

  Denote by  $B(\cH)$   the algebra of all bounded linear operators on
an infinite dimensional  Hilbert space $\cH$ and let
$\Omega_0\subset B(\cH)^n$ be a set containing a ball $[B(\cH)^n]_r$
for some $r>0$, where
 $$
 [B(\cH)^n]_r:=\left\{ (X_1,\ldots, X_n)\in B(\cH)^n: \
 \left\|X_1X_1^*+\cdots + X_nX_n^*\right\|^{1/2}<r\right\}.
 $$
 We say
that $f:\Omega_0\to B(\cH)$ is a free holomorphic function on
$\Omega_0$ if there are some complex numbers $a_\alpha$, $\alpha\in
\FF_n^+$, such that
$$
f(X)=\sum_{k=0}^\infty \sum_{|\alpha|=k} a_\alpha X_\alpha, \qquad
X=(X_1,\ldots,X_n)\in \Omega_0,
$$
where  the convergence is in the operator norm topology. Here, we
denoted   $X_\alpha:=X_{i_1}\cdots X_{i_k}$ if $\alpha=g_{i_1}\cdots
g_{i_k}\in \FF_n^+$, and $X_{g_0}:=I_\cH$. One can show that  any
free holomorphic function on $\Omega_0$ has a unique representation.
 The algebra $H_{{\bf ball}}$~ of free holomorphic
functions on the open operatorial  $n$-ball of radius one is defined
 as the set of all formal power series $f=\sum_{\alpha\in
\FF_n^+}a_\alpha Z_\alpha$ with radius of convergence $r(f)\geq 1$,
i.e.,
 $\{a_\alpha\}_{\alpha\in \FF_n^+}$ are complex numbers  with
$r(f)^{-1}:=\limsup_{k\to\infty} \left(\sum_{|\alpha|=k}
|a_\alpha|^2\right)^{1/2k}\leq 1.$ In this case, the mapping
$$
[B(\cH)^n]_1\ni (X_1,\ldots, X_n)\mapsto f(X_1,\ldots,
X_n):=\sum_{k=0}^\infty \sum_{|\alpha|=k}
 a_\alpha X_\alpha\in
B(\cH)
$$
 is well-defined, where  the convergence is in the operator norm topology. Moreover, the
 series converges absolutely, i.e.,
  $\sum_{k=0}^\infty \left\|\sum_{|\alpha|=k} a_\alpha
  X_\alpha\right\|<\infty$
  and uniformly  on any ball $[B(\cH)^n]_\gamma$ with $0\leq
  \gamma<1$. More on free holomorphic functions on the unit ball
  $[B(\cH)^n]_1$ can be found in \cite{Po-holomorphic}, \cite{Po-automorphism},  and \cite{Po-holomorphic2}.

The  evaluation of $f=\sum_{\alpha\in \FF_n^+}a_\alpha Z_\alpha$  is
also well-defined  if there exists an $n$-tuple
$\rho=(\rho_1,\ldots, \rho_n)$ of strictly positive numbers such
that
$$\limsup_{k\to\infty} \left(\sum_{|\alpha|=k}
|a_\alpha|\rho_\alpha\right)^{1/k}\leq 1.
$$
 In this case, the series
$f(X_1,\ldots, X_n):=\sum_{k=0}^\infty \sum_{|\alpha|=k}a_\alpha
X_\alpha$ converges absolutely and uniformly on any noncommutative
polydisc

 $$P({\bf r}):=\left\{(X_1,\ldots, X_n)\in B(\cH)^n:\ \|X_j\|\leq r_j, j=1,\ldots,n\right\}$$
of multiradius ${\bf{r}}=(r_1,\ldots, r_n)$ with $r_j<\rho_j$, $j=1,\ldots, n$.

We  remark that, when $(X_1,\ldots, X_n)\in B(\cH)^n$ is a nilpotent
$n$-tuple of operators, i.e., there is $m\geq 1$ such that
$X_\alpha=0$ for all $\alpha\in \FF_n^+$ with $|\alpha|=m$, then
$f(X_1,\ldots,X_n)$ makes  sense for any formal power series $f$.

Let $g=\sum_{k=0}^\infty
\sum_{|\alpha|=k} a_\alpha Z_\alpha$ be a formal power series in
indeterminates  $Z_1,\ldots, Z_n$.
 Denote by $\cC_g(\cH)$ (resp.   $\cC_g^{SOT}(\cH)$) the set of all
 $Y:=(Y_1,\ldots, Y_n)\in B(\cH)^n$ such that the series
$$
g(Y_1,\ldots, Y_n):=\sum_{k=0}^\infty \sum_{|\alpha|=k} a_\alpha Y_\alpha
$$
 is norm (resp.   SOT) convergent. These sets are called  sets of norm (resp.   SOT)
 convergence for the power series $g$.
We   introduce the set $\cC_g^{rad}(\cH)$  of all $Y:=(Y_1,\ldots,
Y_n)\in B(\cH)^n$ such that there exists $\delta\in (0,1)$ with the
property that $rY\in \cC_g(\cH)$ for any $r\in (\delta, 1)$ and
$$
\widehat{g}(Y_1,\ldots, Y_n):=\text{\rm SOT-}\lim_{r\to 1}\sum_{k=0}^\infty \sum_{|\alpha|=k} a_\alpha r^{|\alpha|}Y_\alpha
$$
  exists in the strong operator topology.
Note that
$  \cC_g(\cH)\subseteq \cC_g^{SOT}$ and
 $\cC_g^{rad}(\cH)\subseteq \overline{\cC_g(\cH)}^{SOT}$.

Consider    an $n$-tuple of
 formal power series $f=(f_1,\ldots, f_n)$ in indeterminates  $Z_1,\ldots, Z_n$
  with    the Jacobian
$$
\det J_f(0):=\det \left[\left. \frac{\partial f_i} {\partial
Z_j}\right|_{Z=0}\right]_{i,j=1}^n\neq 0.
$$
   As shown in \cite{Po-multi},
the set
 $\{f_\alpha\}_{\alpha\in \FF_n^+}$    is  linearly independent
in ${\bf S}[Z_1,\ldots,Z_n]$, the algebra of all formal powers
series in $Z_1,\ldots,Z_n$. We introduce an inner product on the
linear span of $\{f_\alpha\}_{\alpha\in \FF_n^+}$ by setting
$$
\left< f_\alpha, f_\beta\right>:=\begin{cases} 1\  \ &\text{if } \alpha=\beta\\
0\  \ &\text{if }  \alpha\neq \beta
 \end{cases} \quad \text{ for } \quad \alpha,\beta\in \FF_n^+.
$$
 Let $\HH^2(f)$ be the
completion of the linear span  of   $  \{f_\alpha\}_{\alpha\in
\FF_n^+}$ with respect to this inner product. Assume now that
$f(0)=0$. As seen in \cite{Po-multi},  $f$ is not  a {\it right zero
divisor} with respect to the composition of power series, i.e.,
there is no non-zero formal power series $G\in
\mathbf{S}[Z_1,\ldots, Z_n]$ such that
 $G\circ f = 0$. Consequently,
the elements of  $\HH^2(f)$  can be seen as formal power series in
${\bf S}[Z_1,\ldots,Z_n]$  of  the form $\sum_{\alpha\in \FF_n^+}
a_\alpha f_\alpha$, where $\sum_{\alpha\in \FF_n^+}
|a_\alpha|^2<\infty$.

    Let $f=(f_1,\ldots, f_n)$  be  an $n$-tuple of
 formal power series  in $Z_1,\ldots, Z_n$ such that $f(0)=0$. We  say that $f$ has  property $(\mathcal{S})$
 if the following conditions
  hold.
\begin{enumerate}
\item[($\mathcal{S}_1$)] The $n$-tuple $f$ has nonzero radius of convergence  and
$ \det  J_f(0)\neq 0. $
\item[($\mathcal{S}_2$)]  The indeterminates   $Z_1,\ldots, Z_n$\ are in the Hilbert space
$\HH^2(f)$
 and  each left multiplication operator
  $M_{Z_i}:\HH^2(f)\to \HH^2(f)$  defined by
   $$
   M_{Z_i}\psi:=Z_i\psi, \qquad \psi\in \HH^2(f),
  $$
is a bounded multiplier of $\HH^2(f)$.
\item[($\mathcal{S}_3$)]
The left multiplication operators $M_{f_j}:\HH^2(f)\to \HH^2(f)$,
$M_{f_j}\psi=f_j \psi$,  satisfy the equations
\begin{equation*}
  M_{f_j}=f_j(M_{Z_1},\ldots, M_{Z_n}),\qquad j=1,\ldots,n,
\end{equation*}
where  $(M_{Z_1},\ldots, M_{Z_n})$ is either in the convergence set
$\cC_f^{SOT}(\HH^2(f))$ or $\cC_f^{rad}(\HH^2(f))$.
\end{enumerate}
Note that if $f$ is an $n$-tuple of noncommutative polynomials, then
the condition $(\cS_3)$ is always satisfied. We remark that, when
$(M_{Z_1},\ldots, M_{Z_n})$ is in the set $\cC_f^{rad}(\HH^2(f))$,
then  the condition $(\mathcal{S}_3)$ should be understood as
$$
M_{f_j}=\widehat{f}_j(M_{Z_1},\ldots, M_{Z_n}):=\text{\rm
SOT-}\lim_{r\to 1} f_j(rM_{Z_1},\ldots, rM_{Z_n}),\qquad
j=1,\ldots,n.
$$

Now, we introduce the class of $n$-tuples of  free holomorphic
function with property $(\mathcal{F})$. Let
$\varphi=(\varphi_1,\ldots, \varphi_n)$   be an $n$-tuple of  free
holomorphic functions on $[B(\cH)^n]_\gamma$, $\gamma>0$,  with
range in $[B(\cH)^n]_1$  and such that  $\varphi$ is
 not a   right zero divisor with respect to the  composition
  with free holomorphic functions on $[B(\cH)^n]_1$. Consider
the Hilbert space of free holomorphic functions
  $$
  \HH^2(\varphi):=\{G\circ \varphi: \ G\in  H_{\bf ball}^2\},
  $$
  with the inner product
\begin{equation*}
\label{inner} \left<F\circ \varphi, G\circ
\varphi\right>_{\HH^2(\varphi)}:=  \left<F, G\right>_{H_{\bf
ball}^2}.
\end{equation*}
  We recall that the noncommutative Hardy space
$H^2_{\bf ball}$ is the Hilbert space of all free holomorphic
functions on $ [B(\cH)^n]_1 $
 of the
form
$$f(X_1,\ldots, X_n)=\sum_{k=0}^\infty \sum_{|\alpha|=k}
a_\alpha  X_\alpha, \qquad   \sum_{\alpha\in \FF_n^+}|a_\alpha|^2<\infty,
$$ with the inner product
$ \left< f,g\right>:=\sum_{k=0}^\infty \sum_{|\alpha|=k}a_\alpha
{\overline b}_\alpha,$ where  $g=\sum_{k=0}^\infty \sum_{|\alpha|=k}
b_\alpha  X_\alpha$ is
 another free holomorphic function  in $H^2_{\bf ball}$.
We say that $\varphi$ has property $(\cF)$ if  the following
conditions hold.
\begin{enumerate}
\item[($\mathcal{F}_1$)]  The $n$-tuple  $\varphi=(\varphi_1,\ldots, \varphi_n)$
 has  the range in $[B(\cH)^n]_1$  and it is
 not a   right zero divisor with respect to the  composition
  with free holomorphic functions on $[B(\cH)^n]_1$.
\item[($\mathcal{F}_2$)]  The coordinate functions $X_1,\ldots, X_n$ on $[B(\cH)^n]_\gamma$
 are contained in $\HH^2(\varphi)$ and
  the left multiplication
 by $X_i$
is a bounded multiplier of $\HH^2(\varphi)$.

\item[($\mathcal{F}_3$)] For each $i=1,\ldots,n$,
the left multiplication operator $M_{\varphi_i}:\HH^2(\varphi)\to
\HH^2(\varphi)$ satisfies the equation
$$
M_{\varphi_i}=\varphi_i(M_{Z_1},\ldots, M_{Z_n}),
$$
 where  $(M_{Z_1},\ldots, M_{Z_n})$  is either in the convergence set
$\cC_\varphi^{SOT}(\HH^2(\varphi))$ or
$\cC_\varphi^{rad}(\HH^2(\varphi))$.
\end{enumerate}
We mention that if  $\varphi$ is an $n$-tuple of noncommutative
polynomials, then the condition $(\cF_3)$ is always satisfied.

An $n$-tuple
 $f=(f_1,\ldots, f_n)$ of formal power series is said to have the {\it model
 property} if it
   is either one of the following:
 \begin{enumerate}
 \item[(i)]   an $n$-tuple of polynomials with  property $(\cA)$;

 \item[(ii)] an $n$-tuple of  formal power series  with  $f(0)=0$ and    property
 $(\mathcal{S})$;

 \item[(iii)]  an $n$-tuple of  free holomorphic functions  with property $(\mathcal{F})$.
\end{enumerate}
  We denote by $\cM$ the set
of all $n$-tuples $f$ with the model property. For several examples
of formal power series with the model property we refer the reader
to \cite{Po-multi}.

Let $f=(f_1,\ldots, f_n)$ have the model property and let
$g=(g_1,\ldots, g_n)$ be the $n$-tuple of power series having the
representations
$$
g_i:=\sum_{k=0}^\infty\sum_{\alpha\in \FF_n^+, |\alpha|=k}
a_\alpha^{(i)}Z_\alpha,  \qquad i=1,\ldots,n,
$$
where the  sequence of complex numbers
$\{a_\alpha^{(i)}\}_{\alpha\in \FF_n^+}$ is uniquely defined by the condition
$g\circ f=id$. We say that an $n$-tuple of operators $X=(X_1,\ldots,
X_n)\in B(\cH)^n$ satisfies the equation   $g(f(X))=X$ if
 either
one of the following conditions holds:
\begin{enumerate}
     \item[(a)] $X\in \cC_f^{SOT}(\cH)$  and either
     \begin{equation*}
X_i=\sum_{k=0}^\infty\sum_{\alpha\in \FF_n^+, |\alpha|=k}
a_\alpha^{(i)}[f(X)]_\alpha,  \qquad i=1,\ldots,n,
\end{equation*}
  where the  convergence of the  series  is in the strong operator topology, or
  $$
 X_i= \text{\rm SOT-}\lim_{r\to 1} \sum_{k=0}^\infty\sum_{\alpha\in \FF_n^+, |\alpha|=k}
a_\alpha^{(i)}r^{|\alpha|}[{f}(X)]_\alpha ,\qquad i=1,\ldots,n;
 $$
 \item[(b)] $X\in \cC_f^{rad}(\cH)$  and either
 \begin{equation*}
X_i=\sum_{k=0}^\infty\sum_{\alpha\in \FF_n^+, |\alpha|=k}
a_\alpha^{(i)}[\widehat{f}(X)]_\alpha,  \qquad i=1,\ldots,n,
\end{equation*}
  where the  convergence of the  series  is in the strong operator topology, or
 $$
 X_i= \text{\rm SOT-}\lim_{r\to 1} \sum_{k=0}^\infty\sum_{\alpha\in \FF_n^+, |\alpha|=k}
a_\alpha^{(i)}r^{|\alpha|}[\widehat{f}(X)]_\alpha ,\qquad
i=1,\ldots,n.
 $$
  \end{enumerate}

  We define the noncommutative domain associated with
$f$  by setting
$$
\BB_f(\cH):=\{X=(X_1,\ldots, X_n)\in B(\cH)^n: \ g(f(X))=X \text{
and } \|f(X)\|\leq 1\},
$$
where $g=(g_1,\ldots, g_n)$ is the inverse   of $f$ with respect to
the composition of power series, and the evaluations are
well-defined as above. Note that  the condition $ g(f(X))=X$ is
 automatically satisfied when $f$ is an $n$-tuple of polynomials with  property
 $(\cA)$.

\bigskip

\section{Characteristic functions  and  models for $n$-tuples of operators in  $\BB_f^{cnc}(\cH)$}

In this section, we   present   models
  for  completely non-coisometric (c.n.c.) $n$-tuples of operators in   noncommutative
  domains
  $\BB_f(\cH)$, generated by   $n$-tuples of formal power series
  $f=(f_1,\ldots, f_n)$  of class $\cM^b$,
in which the characteristic function occurs explicitly. This  is
used to show that the characteristic function is a complete  unitary
invariant for the c.n.c. $n$-tuples of operators in $\BB_f(\cH)$. We
also show that
 any contractive multi-analytic operator with respect to
 $M_{Z_1},\ldots, M_{Z_n}$
   generates a c.n.c.
$n$-tuple of operators   ${\bf T}:=({\bf T}_1,\ldots, {\bf T}_n)\in
\BB_f( {\bf H})$.

First, we need a few definitions. Let $f=(f_1,\ldots, f_n)$ be   an
$n$-tuple of formal power series with the model property.
      We say that  $f$ has the radial
approximation property, and write  $f\in \cM_{rad}$,   if there is
$\delta\in (0,1)$ such that $(rf_1,\ldots, rf_n)$ has the model
property  for any $r\in (\delta, 1]$. Denote by  $\cM^{||}$  the set
of all  formal power series $f=(f_1,\ldots, f_n)$ having  the model
property  and such that the universal model $(M_{Z_1},\ldots,
M_{Z_n})$ associated with the noncommutative domain $\BB_f$ is in
the set of norm-convergence (or radial norm-convergence) of $f$. We
also introduce  the class $\cM_{rad}^{||}$ of all formal power
series $f=(f_1,\ldots, f_n)$ with the property that there is
$\delta\in (0,1)$ such that $rf\in \cM^{||}$ for any $r\in (\delta,
1]$. We recall that in all the examples presented in
\cite{Po-multi}, the corresponding  $n$-tuples $f=(f_1,\ldots, f_n)$
are in the class
 $\cM_{rad}^{||}$. Moreover, the  $n$-tuples   of polynomials
with property $(\cA)$ are also in  the class $\cM_{rad}^{||}$.

Let $f=(f_1,\ldots, f_n)$ be   an $n$-tuple of formal power series
with the model property  and assume that $f_i$ has the
representation $f_i(Z_1,\ldots,Z_n)=\sum_{\alpha\in \FF_n^+}
a_\alpha^{(i)} Z_\alpha$. We say that $f$ is in the class $\cM^b$ if
either one of the following conditions holds:
\begin{enumerate}
\item[(i)]   the $n$-tuple $(M_{Z_1},\ldots, M_{Z_n})$ is  in the convergence set
$\cC_f^{SOT}(\HH^2(f))$  and  $$\sup_{m\in
\NN}\left\|\sum_{|\alpha|\leq m} a_\alpha^{(i)}
M_{Z_\alpha}\right\|<\infty,\qquad i=1,\ldots,n;
$$
\item[(ii)]  the $n$-tuple  $(M_{Z_1},\ldots, M_{Z_n})$ is  in the convergence set
  $\cC_f^{rad}(\HH^2(f))$ and $$\sup_{r\in
  [0,1)}\left\|\sum_{k=0}^\infty \sum_{|\alpha|=k} a_\alpha^{(i)}
  r^{|\alpha|}
M_{Z_\alpha}\right\|<\infty, \qquad i=1,\ldots,n.
$$
\end{enumerate}
Note that $\cM^{||}_{rad}\subset\cM^{||}\subset \cM^{b}\subset \cM$.

 We recall that the noncommutative domain
associated with $f\in \cM$  is
$$
\BB_f(\cH):=\{X=(X_1,\ldots, X_n)\in B(\cH)^n: \ g(f(X))=X \text{
and } \|f(X)\|\leq 1\},
$$
where $g$ is the inverse power series of $f$ with respect to the
composition.  We say that $T:=(T_1,\ldots, T_n)\in B(\cH)^n$ is  a
pure $n$-tuple  of operators in  $\BB_f(\cH)$ if
$$
\text{\rm SOT-}\lim_{k\to \infty} \sum_{\alpha\in \FF_n,\,
|\alpha|=k} [f(T)]_\alpha [f(T)]_\alpha^*=0.
$$
The set of all pure elements of $\BB_f(\cH)$ is denoted by
$\BB^{pure}_f(\cH)$. An $n$-tuple $T \in \BB_f(\cH)$ is called {\it
completely  non-coisometric }(c.n.c)  if there is no vector $h\in
\cH$, $h\neq 0$, such that
$$
\left< \Phi_{f,T}^m(I)h,h\right>=\|h\|^2 \quad \text{ for any } \
m=1,2,\ldots,
$$
where the positive linear mapping $\Phi_{f,T}:B(\cH\to B(\cH)$ is
defined  by $ \Phi_{f,T}(Y):=\sum\limits_{i=1}^n f_i(T) Yf_i(T)^*. $
The set of all c.n.c. elements of $\BB_f(\cH)$ is denoted by
$\BB^{cnc}_f(\cH)$. Note that
$$
  \BB^{pure}_f(\cH)\subseteq \BB^{cnc}_f(\cH)\subseteq \BB_f(\cH).
$$

Let $H_n$ be an $n$-dimensional complex  Hilbert space with
orthonormal
      basis
      $e_1$, $e_2$, $\dots,e_n$, where $n\in\{1,2,\dots\}$.
       We consider the full Fock space  of $H_n$ defined by
      $$F^2(H_n):=\CC 1\oplus \bigoplus_{k\geq 1} H_n^{\otimes k},$$
      where   $H_n^{\otimes k}$ is the (Hilbert)
      tensor product of $k$ copies of $H_n$.
Define the left (resp.~right) creation
      operators  $S_i$ (resp.~$R_i$), $i=1,\ldots,n$, acting on $F^2(H_n)$  by
      setting
      $$
       S_i\varphi:=e_i\otimes\varphi, \qquad  \varphi\in F^2(H_n),
      $$
       (resp.~$
       R_i\varphi:=\varphi\otimes e_i$).
The noncommutative disc algebra $\cA_n$ (resp.~$\cR_n$) is the norm
closed algebra generated by the left (resp.~right) creation
operators and the identity. The   noncommutative analytic Toeplitz
algebra $F_n^\infty$ (resp.~$R_n^\infty$)
 is the the weakly
closed version of $\cA_n$ (resp.~$\cR_n$). These algebras were
introduced in \cite{Po-von} in connection with a noncommutative
version of the classical  von Neumann inequality \cite{von}.

 A free holomorphic function $g$ on $[B(\cH)^n]_1$ is
bounded if $
 \|g\|_\infty:=\sup  \|g(X)\|<\infty,
  $
where the supremum is taken over all $X\in [B(\cH)^n]_1$ and $\cH$
is an infinite dimensional Hilbert space. Let $H^\infty_{{\bf
ball}}$ be the set of all bounded free holomorphic functions and let
$A_{{\bf ball}}$ be the set of all elements $f\in H^\infty_{{\bf
ball}}$ such that the mapping
$$[B(\cH)^n]_1\ni (X_1,\ldots, X_n)\mapsto g(X_1,\ldots, X_n)\in B(\cH)$$
 has a continuous extension to the closed  ball $[B(\cH)^n]^-_1$.
We  showed in \cite{Po-holomorphic} that $H^\infty_{{\bf ball}}$ and
$A_{{\bf ball}}$ are Banach algebras under pointwise multiplication
and the norm $\|\cdot \|_\infty$. The noncommutative Hardy space
  $H_{\text{\bf ball}}^\infty $   can be identified to  the noncommutative analytic
Toeplitz algebra $F_n^\infty $ . More precisely, a bounded free
holomorphic function $g$ is uniquely determined by its {\it (model)
boundary function} $\widetilde g\in F_n^\infty$ defined by
$\widetilde g:=\text{\rm SOT-}\lim_{r\to 1} g(rS_1,\ldots, rS_n). $
Moreover, $g$ is  the noncommutative Poisson transform
\cite{Po-poisson} of $\widetilde g$ at $X \in [B(\cH)^n]_1$, i.e., $
g(X )=P_X[\widetilde g\otimes I].$ Similar results hold for bounded free
holomorphic functions on the noncommutative ball
$[B(\cH)^n]_\gamma$, $\gamma>0$.

The next result provides a characterization for  the $n$-tuples of
formal power series $f$ with property $(\cS)$ which are in $\cM^b$.
A similar result holds if $f$ has property  $(\cF)$.

\begin{lemma}
\label{charact-bound} Let  $f=(f_1,\ldots, f_n)$ be   an $n$-tuple
of
 formal power series with  $f(0)=0$  and $\det J_f(0)\neq 0$.  Assume that $f_i$ has the
representation $f_i(Z_1,\ldots,Z_n)=\sum_{\alpha\in \FF_n^+}
a_\alpha^{(i)} Z_\alpha$ and let
 $g=(g_1,\ldots,g_n)$ be
    the  inverse  of $f$   under the composition.  Then  $f$ is  in the class
    $\cM^{b}$ if and only if  each $g_i$ is a bounded free holomorphic function on $[B(\cH)^n]_1$ and
    either one of the following conditions holds:
  \begin{enumerate}
  \item[(i)]

 $
 S_i=\text{\rm SOT-}\lim_{m\to\infty}\sum_{|\alpha|\leq m} a_\alpha^{(i)}
\widetilde{g}_{\alpha}$
 and $$\sup_{m\in
\NN}\left\|\sum_{|\alpha|\leq m} a_\alpha^{(i)}
\widetilde{g}_{\alpha}\right\|<\infty,\qquad i=1,\ldots,n;$$
 \item[(ii)]
  $S_i=\text{\rm SOT-}\lim_{r\to 1}\sum_{k=0}^\infty \sum_{|\alpha|=k} a_\alpha^{(i)}
  r^{|\alpha|}
\widetilde{g}_{\alpha}$  and
 $$\sup_{r\in
  [0,1)}\left\|\sum_{k=0}^\infty \sum_{|\alpha|=k} a_\alpha^{(i)}
  r^{|\alpha|}
\widetilde{g}_{\alpha}\right\|<\infty, \qquad i=1,\ldots,n, $$ where
the series converges in the operator norm topology.
\end{enumerate}
\end{lemma}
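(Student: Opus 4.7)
The strategy is to translate everything through a canonical unitary identification $U:\HH^2(f)\to F^2(H_n)$ defined by $Uf_\alpha:=e_\alpha$, which is indeed unitary since $\det J_f(0)\neq 0$ forces $\{f_\alpha\}_{\alpha\in\FF_n^+}$ to be an orthonormal basis of $\HH^2(f)$. A direct check on basis vectors shows $UM_{f_i}U^*=S_i$, and that right multiplication by $f_j$ on $\HH^2(f)$ (which is an isometry) corresponds to the right creation operator $R_j$ on $F^2(H_n)$.

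\textbf{Identifying $M_{Z_i}$ with $\widetilde g_i$.} Assume first that $M_{Z_i}$ is bounded on $\HH^2(f)$, as required by $(\mathcal{S}_2)$. By associativity of formal power series multiplication, $M_{Z_i}$ commutes with every right multiplication by $f_j$; hence $UM_{Z_i}U^*$ commutes with every $R_j$ and therefore lies in the commutant $(\cR_n^\infty)'=F_n^\infty$. Its action on the vacuum vector is
\[
UM_{Z_i}U^*\cdot 1 = UZ_i = U g_i(f_1,\ldots,f_n)=\sum_\alpha a_\alpha^{(i)}\, e_\alpha,
\]
and since an element of $F_n^\infty$ is uniquely determined by its image of the vacuum (via $Te_\alpha=R_\alpha T\cdot 1$), the identification $H^\infty_{\bf ball}\cong F_n^\infty$ via boundary functions forces $UM_{Z_i}U^*=\widetilde g_i$ and, in particular, $g_i\in H^\infty_{\bf ball}$. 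Conversely, given $g_i\in H^\infty_{\bf ball}$, the boundary function $\widetilde g_i\in F_n^\infty$ is a bounded operator on $F^2(H_n)$ whose Fourier coefficients are $\{a_\alpha^{(i)}\}$, and $U^*\widetilde g_i U$ acts on each $f_\alpha$ as left multiplication by $Z_i$, which recovers $(\mathcal{S}_2)$.

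\textbf{Translating the $\cM^b$ conditions.} Using the identifications $UM_{Z_i}U^*=\widetilde g_i$ and $UM_{f_i}U^*=S_i$, the partial sums transform as
\[
U\!\left(\sum_{|\alpha|\leq m} a_\alpha^{(i)} M_{Z_\alpha}\right)\!U^* =\sum_{|\alpha|\leq m} a_\alpha^{(i)} \widetilde g_\alpha,
\]
and similarly for the radial sums indexed by $r\in[0,1)$. Condition (i) in the definition of $\cM^b$ (which combines the SOT convergence required by $(\mathcal{S}_3)$ with a uniform norm bound on the partial sums) is therefore equivalent, via $U$, to condition (i) of the lemma; likewise condition (ii) translates to (ii). The reverse implication uses the construction of the previous paragraph to first produce $(\mathcal{S}_2)$, and then reverses the translation to recover $(\mathcal{S}_3)$ and the supremum bounds defining $\cM^b$.

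\textbf{Main obstacle.} The technical heart of the argument is the commutation-plus-vacuum-uniqueness step placing $UM_{Z_i}U^*$ inside $F_n^\infty$ and identifying it with the boundary function of $g_i$; this relies on $F_n^\infty=(\cR_n^\infty)'$ and on the bijection $H^\infty_{\bf ball}\leftrightarrow F_n^\infty$ recalled earlier in this section. The radial case (ii) additionally requires an interchange of the limit $r\to 1$ with the unitary conjugation $U(\cdot)U^*$, which is routine once the uniform bound in (ii) is in hand.
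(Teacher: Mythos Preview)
Your argument is correct and follows essentially the same route as the paper: both proofs introduce the canonical unitary $U:\HH^2(f)\to F^2(H_n)$, $Uf_\alpha=e_\alpha$, verify $UM_{f_i}U^*=S_i$ and $UM_{Z_i}U^*=\widetilde g_i$, and then transfer the $\cM^b$ conditions through $U$. Your justification for $UM_{Z_i}U^*\in F_n^\infty$ via the commutant identity $F_n^\infty=(\cR_n^\infty)'$ is slightly more explicit than the paper's, which simply observes that $M_{Z_i}$ being a bounded left multiplier of $\HH^2(f)$ is equivalent (through $U$) to the corresponding element being a bounded left multiplier of $F^2(H_n)$, hence in $F_n^\infty$; but this is the same idea. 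One small slip: in the displayed computation $UM_{Z_i}U^*\cdot 1=\sum_\alpha a_\alpha^{(i)} e_\alpha$, the coefficients appearing are those of $g_i$, not of $f_i$, so they should carry a different symbol than the $a_\alpha^{(i)}$ reserved in the statement for $f_i$.
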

\begin{proof}

Assume that $f$ has the property $(\cS)$ and let $g=(g_1,\ldots,
 g_n)$ be its inverse with respect to the composition.
Let $U:\HH^2(f)\to F^2(H_n)$ be the unitary operator defined by
$U(f_\alpha):=e_\alpha$, $\alpha\in \FF_n^+$. Note that
$Z_i=\sum_{\alpha\in \FF_n^+} b_\alpha^{(i)}
f_\alpha=U^{-1}(\varphi_i)$ for some coefficients $b_\alpha^{(i)}$
such that $\varphi:=\sum_{\alpha\in \FF_n^+}b_\alpha^{(i)}
e_\alpha\in F^2(H_n)$. Note
  that $M_{Z_i}$ is a bounded left multiplier of $\HH^2(f)$ if and only if $\varphi_i$ is a
  bounded left
   multiplier of $F^2(H_n)$.
 Moreover, $M_{Z_i}=U^{-1} \varphi_i(S_1,\ldots,S_n) U$, where
   $\varphi_i(S_1,\ldots,S_n)$ is   in  the noncommutative Hardy algebra $F_n^\infty$ and has the
    Fourier
    representation  $\sum_{\alpha\in \FF_n^+} b_\alpha^{(i)} S_\alpha$.
     According to Theorem 3.1 from \cite{Po-holomorphic}, we deduce that
      $g_i=\sum_{\alpha\in \FF_n^+} a_\alpha^{(i)} Z_\alpha$ is a bounded free
      holomorphic function on the unit ball $[B(\cH)^n]_1$ and  has its model boundary
       function $\widetilde{g_i}=\varphi_i(S_1,\ldots, S_n)$.
On the other hand, note that
  the left multiplication operator
$M_{f_j}:\HH^2(f)\to \HH^2(f)$ defined by
$$
M_{f_j}\left(\sum_{\alpha\in \FF_n^+} c_\alpha
f_\alpha\right)=\sum_{\alpha\in \FF_n^+}
 c_\alpha f_jf_\alpha,\qquad \sum_{\alpha\in \FF_n^+}
 |c_\alpha|^2<\infty,
$$
satisfies the equation
\begin{equation*}
  M_{f_j}=U^{-1} S_j U,\qquad j=1,\ldots,n,
\end{equation*}
where $S_1,\ldots, S_n$ are the left creation operators on
$F^2(H_n)$.   Since   $M_{Z_i}=U^{-1} \widetilde{g}_i U$, where
$\widetilde{g}_i$ is the model boundary function of $g_i\in
H^\infty_{\bf ball}$, it is easy to see that the relation $
M_{f_j}=f_j(M_{Z_1},\ldots, M_{Z_n})$ for  $ j=1,\ldots,n $   is
equivalent to the fact that
 the model boundary  function $\widetilde{g}=(\widetilde{g}_1,\ldots, \widetilde{g}_n)$
  satisfies either one of the following conditions:
     \begin{enumerate}
     \item[(a)] $\widetilde{g}$ is  in $\cC_f^{SOT}(\HH^2(f))$  and
 $
 S_i=f_i(\widetilde{g}_1,\ldots, \widetilde{g}_n)$, $ i=1,\ldots,n;
 $
 \item[(b)] $\widetilde{g}$ is  in $\cC_f^{rad}(\HH^2(f))$  and
 $
 S_i= \text{\rm SOT-}\lim_{r\to 1} f_j(r\widetilde{g}_1,\ldots, r\widetilde{g}_n)$ for
 $i=1,\ldots,n$.
\end{enumerate}
Now, one can easily see that $f\in \cM^{b}$ if and only if the
conditions in  the lemma hold.
\end{proof}

\begin{proposition} \label{ball} Let  $f=(f_1,\ldots, f_n)$ be   an $n$-tuple  of
 formal power series in the class $\cM^{b}$ and let  $g=(g_1,\ldots,g_n)$ be
    its inverse   under the composition.  Then   the set
    $\BB^{cnc}_f(\cH)$
  coincides with the image of all c.n.c.   row contractions under $g$.
  Moreover, $\BB_f^{pure}(\cH)=g([B(\cH)^n]_1^{pure})$.
  \end{proposition}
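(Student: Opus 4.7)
The plan is to show that the evaluation map $T\mapsto f(T)=(f_1(T),\ldots,f_n(T))$ is a bijection between $\BB_f(\cH)$ and the set $g([B(\cH)^n]_1^-)$ of row contractions on which the functional calculus for $g$ makes sense, with inverse $Y\mapsto g(Y)$, and that this bijection preserves both the c.n.c. and pure properties. The key identity that drives everything is
\[
\Phi_{f,T}(Y)=\sum_{i=1}^n f_i(T)\,Y\,f_i(T)^*=\Phi_{f(T)}(Y),
\]
so that iterating gives $\Phi_{f,T}^k(I)=\sum_{|\alpha|=k}[f(T)]_\alpha[f(T)]_\alpha^*$. Consequently, for $T\in\BB_f(\cH)$ with $X:=f(T)$, the c.n.c.\ condition (resp.\ pure condition) for $T$ with respect to $\Phi_{f,T}$ coincides exactly with the c.n.c.\ condition (resp.\ pure condition) for $X$ viewed as a row contraction.

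For the forward inclusion, I take $T\in\BB^{cnc}_f(\cH)$ (resp.\ pure), set $X:=f(T)$, note that $X\in[B(\cH)^n]_1^-$ by the definition of $\BB_f(\cH)$, and invoke the identity above to see that $X$ is c.n.c.\ (resp.\ pure). Since $g(f(T))=T$ by the definition of $\BB_f(\cH)$, we get $T=g(X)$, showing $\BB^{cnc}_f(\cH)\subseteq g([B(\cH)^n]_1^{-,cnc})$ and $\BB^{pure}_f(\cH)\subseteq g([B(\cH)^n]_1^{pure})$.

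For the reverse inclusion, I use the hypothesis $f\in\cM^b$ and Lemma~\ref{charact-bound}, which provide that each $g_i$ is a bounded free holomorphic function on $[B(\cH)^n]_1$ with model boundary function $\widetilde g_i\in F_n^\infty$, and that $S_i=f_i(\widetilde g_1,\ldots,\widetilde g_n)$ in the appropriate SOT or radial-SOT sense. Given a c.n.c.\ row contraction $X\in[B(\cH)^n]_1^-$, the noncommutative Sz.-Nagy--Foias--Popescu $H^\infty$-functional calculus (i.e.\ the noncommutative Poisson transform extended to $F_n^\infty$) produces well-defined operators $T_i:=g_i(X)=P_X[\widetilde g_i]$. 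Applying this calculus to the identity $S_i=f_i(\widetilde g_1,\ldots,\widetilde g_n)$ transfers to $X_i=f_i(T_1,\ldots,T_n)$, so $f(T)=X$ and in particular $\|f(T)\|\le 1$; then $g(f(T))=g(X)=T$ shows $T\in\BB_f(\cH)$. The c.n.c.\ (or pure) property of $T$ in $\BB_f(\cH)$ follows from the $\Phi_{f,T}=\Phi_X$ identity once more. For the pure case, this is simpler: the Poisson transform on pure row contractions is directly multiplicative, so no passage through $F_n^\infty$ boundary functions is even needed.

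The main obstacle is verifying the composition identity $f(g(X))=X$ at the level of operators for c.n.c.\ row contractions $X$ that are not pure, because naive termwise substitution of series need not converge. This is precisely where the class $\cM^b$ and Lemma~\ref{charact-bound} are designed to intervene: they guarantee uniform boundedness of the partial sums (or radial truncations) of $f_i(\widetilde g)$, which allows one to pass from the universal identity $S_i=f_i(\widetilde g)$ on $F^2(H_n)$ to the identity $X_i=f_i(g(X))$ on $\cH$ via SOT-continuity of the noncommutative Poisson transform. Once this identity is established, the two inverse maps $T\leftrightarrow f(T)$ match up and the proposition follows.
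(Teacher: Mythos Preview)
Your proposal is correct and follows essentially the same approach as the paper: the forward inclusion is immediate from $g(f(T))=T$ together with $\Phi_{f,T}=\Phi_{f(T)}$, while the reverse inclusion uses Lemma~\ref{charact-bound} and the SOT-continuity of the noncommutative Poisson transform $P_X$ on bounded sets for c.n.c.\ $X$ to transfer the universal identity $S_i=f_i(\widetilde g)$ to $X_i=f_i(g(X))$. The paper's proof is slightly terser (it states the forward inclusion without comment and deduces the pure case in one line from the c.n.c.\ bijection), but the substance is identical.
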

\begin{proof}
 Set $[B(\cH)^n]_1^{cnc}:=\{ X\in [B(\cH)^n]_1^-:\ X \text{ is  a c.n.c. row contraction}\}$
and note that $\BB^{cnc}_f(\cH)\subseteq \{g(Y): \ Y\in
[B(\cH)^n]_1^{cnc} \}$. To prove the reversed
 inclusion let $W=g(Y)$, where $Y\in [B(\cH)^n]_1^{cnc}$. Assume that $f_i$ has the
representation $f_i(Z_1,\ldots,Z_n)=\sum_{\alpha\in \FF_n^+}
a_\alpha^{(i)} Z_\alpha$.
  Since $f\in \cM^b$,
using  Lemma \ref{charact-bound}  and  the fact that the
noncommutative Poisson transform $P_Y$ is $SOT$-continuous on
bounded sets (since
  $Y$ is a c.n.c. row contraction),   we deduce that either
  \begin{equation*}
  \begin{split}
Y_i&=P_Y[S_i\otimes I]=\text{\rm
SOT-}\lim_{m\to\infty}\sum_{|\alpha|\leq m} a_\alpha^{(i)}
P_Y[\widetilde{g}_{\alpha}\otimes I]\\
&=\text{\rm SOT-}\lim_{m\to\infty}\sum_{|\alpha|\leq m}
a_\alpha^{(i)}  [g(Y)]_{\alpha}
  \end{split}
  \end{equation*}
or
\begin{equation*}
\begin{split}
Y_i&=P_Y[S_i\otimes I]=\text{\rm SOT-}\lim_{r\to
1}\sum_{k=0}^\infty\sum_{|\alpha|= k} a_\alpha^{(i)} r^{|\alpha|}
P_Y[\widetilde{g}_{\alpha}\otimes I]\\
&=\text{\rm SOT-}\lim_{r\to 1}\sum_{k=0}^\infty\sum_{|\alpha|= k}
a_\alpha^{(i)} r^{|\alpha|} [g(Y)]_{\alpha}.
  \end{split}
  \end{equation*}
Therefore, we obtain $Y_i=f_i(g_1(Y),\ldots, g_n(Y))$ for
$i=1,\ldots,n$. Consequently, we have
 $f(g(Y))=Y$ which  implies  that
 $f(W)=f(g(Y))=Y$ and $g(f(W))=g(Y)=W$, and  shows that $W\in \BB_{f}^{cnc}(\cH)$.
Therefore, $\BB_{f}^{cnc}(\cH)= g([B(\cH)^n]_1^{cnc})$, the function
$g$ is one-to-one on $[B(\cH)^n]_1^{cnc}$, and $f$ is its inverse on
$\BB_{f}^{cnc}(\cH)$.  Consequently, since $\BB_{f}^{pure}(\cH)\subset \BB_{f}^{cnc}(\cH)$, we deduce that $\BB_f^{pure}(\cH)=g([B(\cH)^n]_1^{pure})$.
The proof is complete.
\end{proof}

For simplicity, throughout this paper,
 $T:=[T_1,\ldots, T_n]$   denotes either the
 $n$-tuple $(T_1,\ldots, T_n)$  of bounded linear operators on a Hilbert space
  $\cH$ or the row operator matrix $[T_1 ~\cdots ~T_n]$  acting from $\cH^{(n)} $ to $\cH$,
  where $\cH^{(n)}:=\oplus_{i=1}^n \cH$  is the direct sum of $n$ copies of $\cH$.
Assume that $T:=[T_1,\ldots, T_n]$ is a  row contraction, i.e.,
$$
T_1T_1^*+\cdots +T_nT_n^*\leq I.
$$
 The defect operators of $T$ are
$$
\Delta_{T}:=\left(I_\cH-\sum_{i=1}^n T_i T_i^*\right)^{1/2}\in
B(\cH)\quad \text{ and } \quad
\Delta_{T^*}:=(I_{\cH^{(n)}}-T^*T)^{1/2}\in B(\cH^{(n)}),
$$
 and the defect spaces of $T$ are defined by
$$
\cD_T:=\overline{\Delta_{T}\cH}\quad  \text{ and }\quad
\cD_{T^*}:=\overline{\Delta_{T^*} \cH^{(n)}}.
$$
We recall   that the characteristic function of  a row contraction
$T:=[T_1,\ldots, T_n]$   is the multi-analytic operator
$\Theta_T: F^2(H_n)\otimes \cD_{T^*}\to F^2(H_n)\otimes \cD_T$ with
the formal Fourier representation
\begin{equation*}
  -I\otimes T+
\left(I\otimes \Delta_{T}\right)\left(I -\sum_{i=1}^n R_i\otimes
T_i^*\right)^{-1} \left[R_1\otimes I_\cH,\ldots, R_n\otimes I_\cH
\right] \left(I\otimes \Delta_{T^*}\right),
 \end{equation*}
where $R_1,\ldots, R_n$ are the right creation operators on the full
Fock space $F^2(H_n)$. The characteristic  function associated with
an arbitrary row contraction $T:=[T_1,\ldots, T_n]$, \ $T_i\in
B(\cH)$, was introduce in \cite{Po-charact} (see \cite{SzF-book} for
the classical case $n=1$) and it was proved to be  a complete
unitary invariant for completely non-coisometric (c.n.c.) row
contractions.

  Now, let $f=(f_1,\ldots, f_n)$ be an $n$-tuple of formal power series with the model property. The characteristic  function  of  an $n$-tuple $T=(T_1,\ldots, T_n)\in \BB_f(\cH)$
  was introduced in \cite{Po-multi} as
  the multi-analytic operator  with respect to $M_{Z_1},\ldots,
M_{Z_n}$,
$$
\Theta_{f,T}:\HH^2(f)\otimes \cD_{f,T^*}\to \HH^2(f)\otimes
\cD_{f,T},
$$
with the formal Fourier representation
\begin{equation*}
\begin{split}
  -I_{\HH^2(f)}\otimes f(T)+
\left(I_{\HH^2(f)}\otimes \Delta_{f,T}\right)&\left(I_{\HH^2(f)\otimes \cH}-\sum_{i=1}^n \Lambda_i\otimes f_i(T)^*\right)^{-1}\\
&\qquad \qquad \quad \left[\Lambda_1\otimes I_\cH,\ldots,
\Lambda_n\otimes I_\cH \right] \left(I_{\HH^2(f)}\otimes
\Delta_{f,T^*}\right),
\end{split}
\end{equation*}
where $\Lambda_1,\ldots, \Lambda_n$ are the right multiplication
operators  by the power series $f_i$  on the Hardy space $\HH^2(f)$
and the defect operators  associated with  $T:=(T_1,\ldots, T_n)\in
\BB_f(\cH)$ are
\begin{equation*}
\Delta_{f,T}:=\left( I_\cH-\sum_{i=1}^n
f_i(T)f_i(T)^*\right)^{1/2}\in B(\cH) \quad \text{ and }\quad
\Delta_{f,T^*}:=(I-f(T)^*f(T))^{1/2}\in B(\cH^{(n)}),
\end{equation*}
while the defect spaces are $\cD_{f,T}:=\overline{\Delta_{f,T}\cH}$
and $\cD_{f,T^*}:=\overline{\Delta_{f,T^*}\cH^{(n)}}$. We recall
that a  bounded operator  $\Phi: \HH^2(f)\otimes \cK_1\to
\HH^2(f)\otimes \cK_2$  is   multi-analytic   with respect to
$M_{Z_1},\ldots, M_{Z_n}$ if $\Phi(M_{Z_i}\otimes
I_{\cK_1})=(M_{Z_i}\otimes I_{\cK_2})\Phi$ for any $i=1,\ldots,n$.

 In what follows, we   present a model
  for the $n$-tuples of operators in $\BB^{cnc}_f(\cH)$
in which the characteristic function occurs explicitly.
\begin{theorem}\label{funct-model1} Let  $f=(f_1,\ldots, f_n)$ be   an $n$-tuple  of
 formal power series in the class $\cM^b$ and  let $(M_{Z_1},\ldots, M_{Z_n})$  be the universal
model associated with  the noncommutative domain $\BB_f$.  Every
$n$-tuple of operators $T:=(T_1,\ldots, T_n)$ in $\BB_f^{cnc}(\cH)$
is unitarily equivalent to an
 $n$-tuple ${\bf T}:=({\bf T}_1,\ldots, {\bf T}_n)$ in $\BB_f^{cnc}({\bf H})$
 on the Hilbert space
$$
{\bf H}:=[ (\HH^2(f)\otimes \cD_{f,T
})\oplus\overline{\Delta_{\Theta_{f,T}} (\HH^2(f)\otimes
\cD_{f,T^*})}]\ominus
\{\Theta_{f,T} x\oplus\Delta_{\Theta_{f,T}} x:\ \ x\in
\HH^2(f)\otimes \cD_{f,T^*}\},
$$
where $~\Delta_{\Theta_{f,T}}:=(I-\Theta_{f,T}^*\Theta_{f,T})^{1/2}$
and
  the operator ${\bf T}_i$  is defined by
$$
{\bf T}_{i}^*[x\oplus\Delta_{\Theta_{f,T}}   y]:= (M_{Z_i}^*\otimes
I_{\cD_{f,T}}) x\oplus D_i^*(\Delta_{\Theta_{f,T}}y), \qquad
i=1,\ldots,n,
$$
for   $x\in \HH^2(f)\otimes \cD_{f,T} $,  $y\in \HH^2(f)\otimes
\cD_{f,T^*}$,   where
$D_i(\Delta_{\Theta_{f,T}}y):=\Delta_{\Theta_{f,T}} (M_{Z_i} \otimes
I_{\cD_{f,T^*}}) y$.

 Moreover, $~T$ is a pure  $n$-tuple of operators in $\BB_f(\cH)$ if and only  if
 the characteristic function  $~\Theta_{f,T}~$ is an isometry.
  In this  case the model reduces to
$$
{\bf H}=\left(\HH^2(f)\otimes \cD_{f,T}\right)\ominus
\Theta_{f,T}(\HH^2(f)\otimes \cD_{f,T^*}),\qquad{\bf T}_{i}^*
x=(M_{Z_i}^*\otimes I_{\cD_{f,T}}) x,\qquad
x\in \bf H.
$$
\end{theorem}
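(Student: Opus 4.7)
My strategy is to reduce the theorem to the noncommutative Sz.-Nagy--Foia\c s functional model for completely non-coisometric row contractions on $[B(\cH)^n]_1^-$ of \cite{Po-charact}, via the substitution $Y:=f(T)$, and then to transport the resulting model back to $\BB_f$ using the Fock-space identification from Lemma~\ref{charact-bound}. Since $f\in\cM^b$ and $T\in\BB_f^{cnc}(\cH)$, Proposition~\ref{ball} yields $T=g(Y)$ with $Y:=f(T)\in[B(\cH)^n]_1^{cnc}$. The c.n.c.\ model of \cite{Po-charact} applied to $Y$ produces an $n$-tuple $\mathbf{Y}=(\mathbf{Y}_1,\ldots,\mathbf{Y}_n)$ acting on
$$
\mathbf{K}:=\bigl[(F^2(H_n)\otimes\cD_Y)\oplus\overline{\Delta_{\Theta_Y}(F^2(H_n)\otimes\cD_{Y^*})}\bigr]\ominus\bigl\{\Theta_Y x\oplus\Delta_{\Theta_Y} x:x\in F^2(H_n)\otimes\cD_{Y^*}\bigr\},
$$
with $\mathbf{Y}_i^*[x\oplus\Delta_{\Theta_Y}y]=(S_i^*\otimes I_{\cD_Y})x\oplus \mathbf{D}_i^*(\Delta_{\Theta_Y}y)$, unitarily equivalent to $Y$.

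Next I identify the two characteristic functions. Let $U:\HH^2(f)\to F^2(H_n)$ be the unitary $Uf_\alpha=e_\alpha$ of Lemma~\ref{charact-bound}; a direct computation gives $UM_{f_j}=S_jU$, and from $\Lambda_if_\alpha=f_\alpha f_i=f_{\alpha g_i}$ also $U\Lambda_i=R_iU$. Since $Y=f(T)$, the defect operators and spaces coincide: $\Delta_{f,T}=\Delta_Y$, $\Delta_{f,T^*}=\Delta_{Y^*}$, $\cD_{f,T}=\cD_Y$, $\cD_{f,T^*}=\cD_{Y^*}$. Comparing the formal Fourier expansions term by term using $U\Lambda_i=R_iU$ gives
$$
(U\otimes I_{\cD_{f,T}})\Theta_{f,T}=\Theta_Y(U\otimes I_{\cD_{f,T^*}}),
$$
so $\Theta_{f,T}$ and $\Theta_Y$ are unitarily equivalent and $\Delta_{\Theta_{f,T}}=(U^*\otimes I)\Delta_{\Theta_Y}(U\otimes I)$.

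I then transport the model and invert $f$. Define $V$ to be the unitary acting as $U^*\otimes I_{\cD_{f,T}}$ on the first summand and as $U^*\otimes I_{\cD_{f,T^*}}$ (restricted to the corresponding closure) on the second; by the previous paragraph $V$ maps $\mathbf{K}$ unitarily onto the space ${\bf H}$ of the theorem. Set $\mathbf{T}'_i:=V\mathbf{Y}_iV^*$; then $(\mathbf{T}'_j)^*$ acts on the first component of ${\bf H}$ as $M_{f_j}^*\otimes I_{\cD_{f,T}}$. Lemma~\ref{charact-bound} shows that $g_i(S_1,\ldots,S_n)=\widetilde g_i=UM_{Z_i}U^{-1}$ in the relevant SOT (or radial) sense, and therefore $g_i(M_{f_1},\ldots,M_{f_n})=M_{Z_i}$ on $\HH^2(f)$. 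Setting ${\bf T}:=g(\mathbf{T}')$, the first-component action of ${\bf T}_i^*$ becomes $M_{Z_i}^*\otimes I_{\cD_{f,T}}$, and the parallel computation on the second summand recovers precisely the operator $D_i$ of the statement. Since $T=g(Y)$ and $Y\cong\mathbf{Y}\cong\mathbf{T}'$, we conclude $T\cong{\bf T}\in\BB_f^{cnc}({\bf H})$.

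Finally, the purity equivalence follows from $\Phi_{f,T}^k(I)=\sum_{|\alpha|=k}Y_\alpha Y_\alpha^*$, which gives $T\in\BB_f^{pure}(\cH)\iff Y\in[B(\cH)^n]_1^{pure}$; by \cite{Po-charact} this is equivalent to $\Theta_Y$, hence $\Theta_{f,T}$, being an isometry, in which case $\Delta_{\Theta_{f,T}}=0$ and ${\bf H}$ reduces to the stated subspace of $\HH^2(f)\otimes\cD_{f,T}$. The main technical difficulty lies in the transport step: one must justify that $g(\mathbf{T}')$ is a well-defined $n$-tuple on all of ${\bf H}$ and that the $M_{Z_i}^*\otimes I$-formula extends from dense subspaces to the full model space. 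This is where the $\cM^b$ hypothesis is indispensable, as Lemma~\ref{charact-bound} supplies the uniform bounds $\sup_m\|\sum_{|\alpha|\le m}a_\alpha^{(i)}\widetilde g_\alpha\|<\infty$ (and the radial analogue) that permit the defining SOT/radial limits to be taken and then carried over to $\mathbf{T}'$ via the noncommutative Poisson transform.
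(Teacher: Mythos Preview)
Your proposal is correct and follows essentially the same route as the paper: reduce to the c.n.c.\ model of \cite{Po-charact} for the row contraction $Y=f(T)$, intertwine the two characteristic functions via the canonical unitary $U:\HH^2(f)\to F^2(H_n)$, transport the Fock-space model to ${\bf H}$, and recover ${\bf T}_i$ as $g_i$ of the transported tuple using $g_i(M_{f_1},\ldots,M_{f_n})=M_{Z_i}$. The paper's only additional content is the explicit inner-product computation verifying that the functional-calculus operator $g_i(\mathbf{T}')$ agrees with the direct-sum formula $(M_{Z_i}^*\otimes I)\oplus D_i^*$ restricted to ${\bf H}$, carried out via the SOT relation $M_{Z_i}=\text{SOT-}\lim_{r\to 1}g_i(rM_{f_1},\ldots,rM_{f_n})$; you correctly flag this as the main technical point.
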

\begin{proof} If  $T:=(T_1,\ldots, T_n)$ is in $\BB_f(\cH)^{cnc}$, then
the $n$-tuple $f(T):=(f_1(T),\ldots, f_n(T))$ is a  c.n.c row
contraction. According to
 \cite{Po-charact},  $f(T)$ is unitarily equivalent to a row
 contraction  ${\bf A}:=({\bf A}_1,\ldots, {\bf A}_n)$ on  the Hilbert
 space
 $$
\widetilde{\cH}:=[ (F^2(H_n)\otimes
\cD_{f(T)})\oplus\overline{\Delta_{\Theta_{f(T)}} (F^2(H_n)\otimes
\cD_{f(T)^*})}]\ominus
\{\Theta_{f(T)} z\oplus\Delta_{\Theta_{f(T)}} z:\ \ z\in
F^2(H_n)\otimes \cD_{f(T)^*}\},
$$
where $\Theta_{f(T)}$ is the characteristic function of the row
contraction  $f(T):=(f_1(T),\ldots, f_n(T))$, the defect operator
$~\Delta_{\Theta_{f(T)}}:=(I-\Theta_{f(T)}^*\Theta_{f(T)})^{1/2}$,
and
  the operator ${\bf A}_i$  is defined on $\widetilde \cH$
  by setting
\begin{equation}
\label{A} {\bf A}_{i}^*[\omega\oplus\Delta_{\Theta_{f(T)}}   z]:=
(S_i^*\otimes I_{\cD_{f(T)}}) \omega\oplus
C_i^*(\Delta_{\Theta_{f(T)}}z), \qquad i=1,\ldots,n,
\end{equation}
for   $\omega\in F^2(H_n)\otimes \cD_{f(T)} $,  $z\in
F^2(H_n)\otimes \cD_{f(T)^*}$,   where $C_i$ is  defined on
$\overline{\Delta_{\Theta_{f(T)}} (F^2(H_n)\otimes \cD_{f(T)^*})}$
by
$$
C_i(\Delta_{\Theta_{f(T)}}z):=\Delta_{\Theta_{f(T)}} (S_i \otimes
I_{\cD_{f(T)^*}}) z, \qquad i =1,\ldots,n,
$$
 and $S_1,\ldots, S_n$
are the left creation operators on the full Fock space $F^2(H_n)$.
Since $f(T)$ and ${\bf A}$ are completely non-coisometric row
contractions and $g$, the inverse of $f$ with respect to the
composition, is a bounded free holomorphic function on the unit ball
$[B(\cH)^n]_1$, then,  using the functional calculus for c.n.c. row
contractions (see \cite{Po-funct}), it makes sense to talk about
$g({\bf A}):=(g_1({\bf A}),\ldots, g_n({\bf A}))$  and
$g(f(T)):=(g_1(f(T)),\ldots, g_n(f(T)))$. Consequently, since
$g(f(T))=T$ and  $f(T)$ is unitarily equivalent to ${\bf A}$, we
deduce that $T=(T_1,\ldots, T_n)$ is unitarily equivalent to
$\TT:=(\TT_1,\ldots, \TT_n)$, where $\TT_i=g_i({\bf A})$. Since
$f\in \cM^b$, we use  Proposition \ref{ball} to conclude that
$\TT\in \BB_f^{cnc}(\widetilde \cH)$.

Consider
 the canonical unitary operator $U:\HH^2(f)\to F^2(H_n)$ defined by
 $Uf_\alpha=e_\alpha$, $\alpha\in \FF_n^+$ and
note that
\begin{equation}
\label{2charact}
\Theta_{f,T}=(U^*\otimes I_{\cD_{f,T}})\Theta_{f(T)}(U\otimes
I_{\cD_{f,T^*}}),
\end{equation}
where $\Theta_{f(T)}$ is the characteristic function of the row
contraction
 $f(T)=[f_1(T),\ldots, f_n(T)]$. Hence, we deduce that
 \begin{equation}\label{DeTe}
 \Delta_{\Theta_{f,T}}=(U^*\otimes
 I_{\cD_{f,T^*}})\Delta_{\Theta_{f(T)}}(U\otimes I_{\cD_{f,T^*}}).
\end{equation}
Define the subspaces
$$
{\bf G}:=\{\Theta_{f,T} x\oplus\Delta_{\Theta_{f,T}} x:\ \ x\in
\HH^2(f)\otimes \cD_{f,T^*}\}
$$
and
$$
{\widetilde G}:=\{\Theta_{f(T)} z\oplus\Delta_{\Theta_{f(T)}} z:\ \
z\in F^2(H_n)\otimes \cD_{f(T)^*}\},
$$
and the unitary operator $\Gamma$ acting  from  the Hilbert space
$(\HH^2(f)\otimes \cD_{f,T })\oplus\overline{\Delta_{\Theta_{f,T}}
(\HH^2(f)\otimes \cD_{f,T^*})}$ to $(F^2(H_n)\otimes \cD_{f(T)
})\oplus\overline{\Delta_{\Theta_{f(T)}} (F^2(H_n)\otimes
\cD_{f(T)^*})}$  and defined by
$$
\Gamma:=(U\otimes I_{\cD_{f,T}})\oplus (U\otimes I_{\cD_{f,T^*}}).
$$
Since $\cD_{f,T}=\cD_{f(T)}$ and $\cD_{f,T^*}=\cD_{f(T)^*}$, it is
easy to see that  that $\Gamma({\bf G})={\widetilde \cG}$ and
$\Gamma({\bf H})={\widetilde \cH}$. Therefore, $\Gamma|_{\bf H}:{\bf
H}\to \widetilde \cH$ is a unitary operator.

We introduce the operators ${\bf B}_i:=(\Gamma|_{\bf H})^{-1} \TT_i
(\Gamma|_{\bf H})$, $i=1,\ldots, n$. Since  the $n$-tuple
$(\TT_1,\ldots, \TT_n)$ is in $\BB_f^{cnc}(\widetilde \cH)$, we
deduce that ${\bf B}:=({\bf B}_1,\ldots, {\bf B}_n)$  is in
$\BB_f^{cnc}({\bf H})$.

Now, we show that the operators ${\bf T}_i$, $i=1,\ldots,n$, defined
in the theorem are well-defined and bounded on the Hilbert space
${\bf H}$. Note that since $\Theta_{f,T}$ is a multi-analytic
operator with respect to $M_{Z_1},\ldots, M_{Z_n}$, we have
\begin{equation*}
\begin{split}
\left\|D_i(\Delta_{\Theta_{f,T}}y)\right\|&=\left\|\Delta_{\Theta_{f,T}}(M_{Z_i}\otimes
I_{\cD_{f,T^*}})y\right\| \\
&=\left<M_{Z_i}^* M_{Z_i}\otimes I_{\cD_{f,T^*}}-\Theta_{f,T}^*
M_{Z_i}^*M_{Z_i}\Theta_{f,T})y,y\right>\\
&=\|Z_i\|^2_{\HH^2(f)} \left<
(I-\Theta_{f,T}^*\Theta_{f,T})y,y\right>=\|Z_i\|^2_{\HH^2(f)}\left\|\Delta_{\Theta_{f,T}}y\right\|^2
\end{split}
\end{equation*}
for any $y\in \HH^2(f)\otimes \cD_{f,T^*}$.  Consequently, $D_i$
extends to a unique bounded operator on the Hilbert space
$\overline{\Delta_{\Theta_{f,T}}
(\HH^2(f)\otimes \cD_{f,T^*})}$.
Note also that due to the fact that the subspace ${\bf G}$ is
invariant under $(M_{Z_i}\otimes I_{\cD_{f,T^*}})\oplus D_i$, we
have $\left[(M_{Z_i}^*\otimes I_{\cD_{f,T^*}})\oplus
D_i^*\right]({\bf H})\subset {\bf H}$, which proves our assertion.

Our next step is to show   that ${\bf B}_i={\bf T}_i$ for
$i=1,\ldots, n$. First, note that  due to relation \eqref{A} and the
functional calculus for c.n.c. row contractions, we have
\begin{equation*}\begin{split}
&\left(\omega\oplus\Delta_{\Theta_{f(T)}} z,
\TT_i(\omega'\oplus\Delta_{\Theta_{f(T)}}   z')\right>
\\
&\quad =\left(\omega\oplus\Delta_{\Theta_{f(T)}} z, g_i({\bf
A}_1,\ldots,
{\bf A}_n)(\omega'\oplus\Delta_{\Theta_{f(T)}}   z')\right>\\
 &\quad
=\lim_{r\to 1} \left<\omega\oplus\Delta_{\Theta_{f(T)}} z,\
g_i\left(r[(S_1\otimes I_{\cD_{f(T)}})\oplus
C_1],\ldots,r[(S_n\otimes I_{\cD_{f(T)}})\oplus C_n)]\right)
(\omega' \oplus
 \Delta_{\Theta_{f(T)}}z')\right>\\
 &
 \quad =\lim_{r\to 1} \left<\omega\oplus\Delta_{\Theta_{f(T)}} z,\
\left[(g_i(rS_1, \ldots, rS_n)\otimes I_{\cD_{f(T)}})\oplus
g_i(rC_1,\ldots, rC_n) \right] (\omega' \oplus
 \Delta_{\Theta_{f(T)}}z')\right>\\
 &\quad =
\lim_{r\to 1} \left<\omega\oplus\Delta_{\Theta_{f(T)}} z,\
[g_i(rS_1, \ldots, rS_n)\otimes I_{\cD_{f(T)}}]\omega'\oplus
[\Delta_{\Theta_{f(T)}}(g_i(rS_1,\ldots, rS_n)\otimes
I_{\cD_{f(T)^*}}) ]
  z'\right>
 \end{split}
\end{equation*}
for any $\omega, \omega'\in F^2(H_n)\otimes \cD_{f(T)}$ and $z,
z'\in F^2(H_n)\otimes \cD_{f(T)^*}$.

\smallskip
Now,  using relation \eqref{DeTe},  for any $x, x'\in
\HH^2(f)\otimes \cD_{f,T}$ and $y,y'\in \HH^2(f)\otimes
\cD_{f,T^*}$, we have
\begin{equation*}
\begin{split}
&\left<x\oplus\Delta_{\Theta_{f,T}}y, {\bf B}_i
(x'\oplus\Delta_{\Theta_{f,T}}y')\right> \\
&\quad = \left<x\oplus\Delta_{\Theta_{f,T}}y, (\Gamma|_{\bf H})^{-1}
\TT_i
(\Gamma|_{\bf H})(x'\oplus\Delta_{\Theta_{f,T}}y')\right>\\
&\quad =\left<[(U\otimes
I_{\cD_{f,T}})x\oplus\Delta_{\Theta_{f,T}}(U\otimes
I_{\cD_{f,T}})y], \TT_i[(U\otimes
I_{\cD_{f,T}})x'\oplus\Delta_{\Theta_{f,T}}(U\otimes
I_{\cD_{f,T}})y']\right>.
\end{split}
\end{equation*}
Setting $\omega=(U\otimes I_{\cD_{f,T}})x$, $z=(U\otimes
I_{\cD_{f,T}})y$, $\omega'=(U\otimes I_{\cD_{f,T}})x'$,
$z'=(U\otimes I_{\cD_{f,T}})y'$, and combining  the results above,
we obtain
\begin{equation*}
\begin{split}
&\left<x\oplus\Delta_{\Theta_{f,T}}y, {\bf B}_i
(x'\oplus\Delta_{\Theta_{f,T}}y')\right> \\
&\quad = \lim_{r\to 1}\left<[(U\otimes
I_{\cD_{f,T}})x\oplus\Delta_{\Theta_{f,T}}(U\otimes
I_{\cD_{f,T}})y],\right.\\
&\qquad \left.[g_i(rS_1, \ldots, rS_n)\otimes
I_{\cD_{f(T)}}](U\otimes I_{\cD_{f,T}})x'\oplus
[\Delta_{\Theta_{f(T)}}(g_i(rS_1,\ldots, rS_n)\otimes
I_{\cD_{f(T)^*}}) ](U^*\otimes I_{\cD_{f,T}})y'\right>\\
&\quad =\lim_{r\to 1}\left<x\oplus\Delta_{\Theta_{f,T}}y,
[(U^*g_i(rS_1,\ldots, rS_n)U)\otimes I_{\cD_{f,T}}]x'\oplus
[\Delta_{\Theta_{f(T)}}(U^*g_i(rS_1,\ldots, rS_n)U\otimes
I_{\cD_{f(T)^*}})] y'\right>\\
&\quad =\lim_{r\to 1}\left<x\oplus\Delta_{\Theta_{f,T}}y,
[g_i(rM_{f_1},\ldots, rM_{f_n})\otimes I_{\cD_{f,T}}]x'\oplus
[\Delta_{\Theta_{f(T)}}(g_i(rM_{f_1},\ldots, rM_{f_n})\otimes
I_{\cD_{f(T)^*}})] y'.
 \right>
\end{split}
\end{equation*}
On the other hand, since $f$ has the model property, we have
$$
M_{Z_i}=g_i(M_{f_1},\ldots, M_{f_n})=\text{\rm SOT-}\lim_{r\to
1}g_i(rM_{f_1},\ldots, rM_{f_n}).
$$
Now, we can  deduce that
\begin{equation*}
\begin{split}
&\left<x\oplus\Delta_{\Theta_{f,T}}y, {\bf B}_i
(x'\oplus\Delta_{\Theta_{f,T}}y')\right> \\
&\quad =\left<x\oplus\Delta_{\Theta_{f,T}}y, (M_{Z_i}\otimes
I_{\cD_{f,T}})x'\oplus \Delta_{\Theta_{f(T)}} (M_{Z_i}\otimes
I_{\cD_{f,T}})y'\right>\\
&\quad =\left<x\oplus\Delta_{\Theta_{f,T}}y, {\bf
T}_i(x\oplus\Delta_{\Theta_{f,T}}y')\right>
\end{split}
\end{equation*}
for any $x, x'\in \HH^2(f)\otimes \cD_{f,T}$ and $y,y'\in
\HH^2(f)\otimes \cD_{f,T^*}$. Hence, we obtain ${\bf B}_i={\bf T}_i$
for any $i=1,\ldots,n$, which completes the first part of the proof.

Since the $n$-tuples of operators $T:=(T_1,\ldots, T_n)$ and $\TT:=(\TT_1,\ldots, \TT_n)=(g_1({\bf A}),\ldots, g_n({\bf A}))$ are unitarily equivalent, we deduce that
$T\in \BB_f^{pure}(\cH)$ if and only if $\TT\in \BB_f^{pure}(\widetilde\cH)$. On the other hand, due to Proposition \ref{ball},  $\TT\in \BB_f^{pure}(\widetilde\cH)$ if and only if ${\bf A}\in [B(\cH)^n]_1^{pure}$. Since the row contraction $f(T):=(f_1(T),\ldots, f_n(T))$  is unitarily equivalent to ${\bf A}$, Theorem 4.1 from \cite{Po-charact} shows that ${\bf A}$ is pure if and only if the characteristic function $\Theta_{f(T)}$ is an isometry which, due to relation \eqref{2charact}, is equivalent to  $\Theta_{f,T}$ being an isometry. This completes the proof.
\end{proof}

Let  $\Phi: \HH^2(f)\otimes \cK_1\to \HH^2(f)\otimes \cK_2$ and
$\Phi':\HH^2(f)\otimes\cK_1'\to \HH^2(f)\otimes  \cK_2'$ be two
multi-analytic operators with respect to $M_{Z_1},\ldots, M_{Z_n}$. We say
that $\Phi$ and $\Phi'$ coincide if there are two unitary operators
$\tau_j\in B(\cK_j, \cK_j')$, $j=1,2$,  such that
$$
\Phi'(I_{\HH^2(f)}\otimes \tau_1)=(I_{\HH^2(f)}\otimes \tau_2) \Phi.
$$
The next result shows that the   characteristic function is a
complete unitary invariant for  the $n$-tuples of operators in the c.n.c. part of the
 noncommutative domain  $\BB_f(\cH)$.

\begin{theorem}\label{u-inv}
 Let  $f=(f_1,\ldots, f_n)$ be   an $n$-tuple  of
 formal power series in the class $\cM^b$ and let $T:=(T_1,\ldots, T_n)\in \BB_f^{cnc}(\cH)$
   and
$T':=(T_1',\ldots, T_n')\in \BB_f^{cnc}(\cH')$. Then $T$ and $T'$
are unitarily equivalent if and only if their characteristic
functions $\Theta_{f,T}$  and $\Theta_{f,T'}$ coincide.
\end{theorem}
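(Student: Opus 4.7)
The plan is to prove both directions using the functional model of Theorem \ref{funct-model1}.

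For the ``only if'' direction, suppose $T$ and $T'$ are unitarily equivalent via a unitary $W:\cH\to\cH'$ with $WT_i=T_i'W$ for $i=1,\ldots,n$. Since $f$ has the model property, $W$ intertwines the evaluations: $Wf_i(T)=f_i(T')W$. Consequently $W$ conjugates the defect operators $\Delta_{f,T}$ to $\Delta_{f,T'}$ (as operators on $\cH,\cH'$) and $W^{(n)}$ conjugates $\Delta_{f,T^*}$ to $\Delta_{f,T'^*}$. This induces unitaries $\tau_2:\cD_{f,T}\to\cD_{f,T'}$ and $\tau_1:\cD_{f,T^*}\to\cD_{f,T'^*}$ satisfying $\tau_2\Delta_{f,T}=\Delta_{f,T'}W$ and $\tau_1\Delta_{f,T^*}=\Delta_{f,T'^*}W^{(n)}$. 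Substituting these intertwining relations term-by-term into the Fourier representation of the characteristic functions, one verifies
\[
\Theta_{f,T'}(I_{\HH^2(f)}\otimes\tau_1)=(I_{\HH^2(f)}\otimes\tau_2)\,\Theta_{f,T},
\]
i.e., $\Theta_{f,T}$ and $\Theta_{f,T'}$ coincide.

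For the ``if'' direction, assume $\Theta_{f,T}$ and $\Theta_{f,T'}$ coincide via unitaries $\tau_1,\tau_2$ as in the definition of coincidence. Taking adjoints in the coincidence relation and multiplying, one obtains
\[
\Theta_{f,T'}^*\Theta_{f,T'}(I\otimes\tau_1)=(I\otimes\tau_1)\Theta_{f,T}^*\Theta_{f,T},
\]
and hence, by uniqueness of positive square roots, $\Delta_{\Theta_{f,T'}}(I\otimes\tau_1)=(I\otimes\tau_1)\Delta_{\Theta_{f,T}}$. Therefore the map $\Delta_{\Theta_{f,T}}y\mapsto \Delta_{\Theta_{f,T'}}(I\otimes\tau_1)y$ extends by continuity to a well-defined unitary $\Gamma_2$ between the closures $\overline{\Delta_{\Theta_{f,T}}(\HH^2(f)\otimes\cD_{f,T^*})}$ and $\overline{\Delta_{\Theta_{f,T'}}(\HH^2(f)\otimes\cD_{f,T'^*})}$. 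Setting $\Gamma_1:=I_{\HH^2(f)}\otimes\tau_2$ on the first summand and $\Gamma:=\Gamma_1\oplus\Gamma_2$ on the ambient direct sum, the coincidence relation shows $\Gamma$ carries the graph subspace $\{\Theta_{f,T}x\oplus\Delta_{\Theta_{f,T}}x\}$ onto the analogous graph for $T'$. Hence $\Gamma$ restricts to a unitary ${\bf H}\to{\bf H}'$ between the model spaces of Theorem \ref{funct-model1}.

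It remains to check $\Gamma{\bf T}_i^*=({\bf T}_i')^*\Gamma$. Since $\tau_2$ acts on the coefficient space, $I_{\HH^2(f)}\otimes\tau_2$ commutes with $M_{Z_i}^*\otimes I$, handling the first summand; on the second, the intertwining of $D_i$ and $D_i'$ reduces to $\Gamma_2 D_i=D_i'\Gamma_2$, which follows from the identity $\Delta_{\Theta_{f,T'}}(I\otimes\tau_1)=(I\otimes\tau_1)\Delta_{\Theta_{f,T}}$ together with the commutation of $I\otimes\tau_1$ with $M_{Z_i}\otimes I_{\cD_{f,T^*}}$. Combining with Theorem \ref{funct-model1}, the models ${\bf T}$ and ${\bf T}'$ are unitarily equivalent, and hence so are $T$ and $T'$. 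The main obstacle is this last verification that $\Gamma$ simultaneously respects the graph subspace cut-out of ${\bf H}$ and conjugates the operators $D_i$, both of which are encoded precisely in the coincidence relation.
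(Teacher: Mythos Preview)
Your proof is correct and follows essentially the same approach as the paper: both directions are handled identically, with the ``if'' direction building the unitary $\Gamma=(I\otimes\tau_2)\oplus\Gamma_2$ on the ambient model space, checking it preserves the graph subspace, and then verifying it intertwines the model operators ${\bf T}_i$ and ${\bf T}_i'$ via the commutation of $I\otimes\tau_1$ with both $\Delta_{\Theta_{f,T}}$ and $M_{Z_i}\otimes I$. Your $\Gamma_2$, defined by $\Delta_{\Theta_{f,T}}y\mapsto\Delta_{\Theta_{f,T'}}(I\otimes\tau_1)y$, is (via the intertwining you derived) exactly the restriction of $I\otimes\tau_1$ to the range closure of $\Delta_{\Theta_{f,T}}$, which is how the paper constructs the second summand of $\Gamma$.
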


\begin{proof}

 Assume that $T$ and $T'$ are unitarily equivalent and let $W:\cH\to
\cH'$ be a unitary operator such that $T_i=W^*T_i'W$ for any
$i=1,\ldots, n$.  Since $T\in \cC_f^{SOT}(\cH)$ or $T\in
\cC_f^{rad}(\cH)$ and similar relations hold for $T'$, it is  easy
to see that
$$
W\Delta_{f,T}=\Delta_{f,T'}W \quad \text{ and }\quad (\oplus_{i=1}^n
W)\Delta_{f,T^*}=\Delta_{f,T'^*}(\oplus_{i=1}^n W).
$$
Define the unitary operators $\tau$ and $\tau'$ by setting
$$\tau:=W|_{\cD_{f,T}}:\cD_{f,T}\to \cD_{f,T'} \quad \text{ and }\quad
\tau':=(\oplus_{i=1}^n W)|_{\cD_{f,T^*}}:\cD_{f,T^*}\to
\cD_{f,T'^*}.
$$
Using the definition of the  characteristic function,
we deduce that that
$$
(I_{\HH^2(f)}\otimes
\tau)\Theta_{f,T}=\Theta_{f,T'}(I_{\HH^2(f)}\otimes \tau').
$$

Conversely, assume that the   characteristic functions  of $T$ and
$T'$ coincide.  Then  there exist unitary operators
$\tau:\cD_{f,T}\to \cD_{f,T'}$ and $\tau_*:\cD_{f,T^*}\to
\cD_{f,{T'}^*}$ such that
\begin{equation}\label{com}
(I_{\HH^2(f)}\otimes
\tau)\Theta_{f,T}=\Theta_{f,T'}(I_{\HH^2(f)}\otimes \tau_*).
\end{equation}
Hence, we obtain
\begin{equation}
\label{DETh}
\Delta_{\Theta_{f,T}}=\left(I_{\HH^2(f)}\otimes \tau_*\right)^*
\Delta_{\Theta_{f,T'}}\left(I_{\HH^2(f)}\otimes \tau_*\right)
\end{equation}
and
$$
\left(I_{\HH^2(f)}\otimes
\tau_*\right)\overline{\Delta_{f,T}(\HH^2(f)\otimes \cD_{f,T^*})}=
\overline{\Delta_{f,T'}(\HH^2(f)\otimes \cD_{f,{T'}^*})}.
$$
Consider the Hilbert spaces
$$
{\bf K}_{f,T}:=[ (\HH^2(f)\otimes \cD_{f,T
})\oplus\overline{\Delta_{\Theta_{f,T}} (\HH^2(f)\otimes
\cD_{f,T^*})}],
$$
$${\bf G}_{f,T}:= \{\Theta_{f,T} x\oplus\Delta_{\Theta_{f,T}} x:\ \
x\in \HH^2(f)\otimes \cD_{f,T^*}\},
$$
and  ${\bf H}_{f,T}:={\bf K}_{f,T}\ominus {\bf G}_{f,T}$.
 We define
the unitary operator $\Gamma: {\bf K}_{f,T}\to {\bf K}_{f,T'}$ by
setting
$$\Gamma:=(I_{\HH^2(f)}\otimes \tau)\oplus (I_{\HH^2(f)}\otimes \tau_*).
$$
Due to relations \eqref{com} and  \eqref{DETh}, we have $\Gamma({\bf G}_{f,T})={\bf G}_{f,T'}$ and
$\Gamma({\bf H}_{f,T})={\bf H}_{f,T'}$. Therefore, the operator
$\Gamma|_{{\bf H}_{f,T}}:{\bf H}_{f,T}\to {\bf H}_{f, T'}$ is
unitary. Now, let ${\bf T}:=[{\bf T}_1,\ldots {\bf T}_n]$ and ${\bf
T}':=[{\bf T}_1',\ldots {\bf T}_n']$ be the models provided by
Theorem \ref{funct-model1}  for the $n$-tuples $T$ and $T'$,
respectively.

We recall that  the operator $D_i$  is defined by
$D_i(\Delta_{\Theta_{f,T}}y):=\Delta_{\Theta_{f,T}} (M_{Z_i} \otimes
I_{\cD_{f,T^*}}) y$ \, for  all  $y\in \HH^2(f)\otimes \cD_{f,T^*}$. Using
relation \eqref{DETh}, we deduce that
\begin{equation*}
\begin{split}
D_i'((I\otimes \tau_*)\Delta_{\Theta_{f,T}}y)&=
D_i'(\Delta_{\Theta_{f,T'}}(I\otimes \tau_*)y)\\
&=\Delta_{\Theta_{f,T'}}(M_{Z_i}\otimes I_{\cD_{f,{T'}^*}})(I\otimes
\tau_*)y)\\
&=(I\otimes \tau_*)\Delta_{\Theta_{f,T}}(M_{Z_i}\otimes
I_{\cD_{f,{T}^*}})y\\
&=(I\otimes \tau_*)D_i(\Delta_{\Theta_{f,T}}y).
\end{split}
\end{equation*}
Consequently, we obtain that ${D_i'}^*(I\otimes \tau_*)=(I\otimes
\tau_*)D_i^*$. On the other hand, we have
\begin{equation*}
 (M_{Z_i}^*\otimes
I_{\cD_{f,T'}})(I_{\HH^2(f)}\otimes \tau)= (I_{\HH^2(f)}\otimes
\tau)(M_{Z_i}^*\otimes I_{\cD_{f,T}}).
\end{equation*}
Combining these relations and using the definition of the unitary operator $\Gamma$,  we deduce  that
\begin{equation*}
\begin{split}
{{\bf T}_i'}^* \Gamma(x+\Delta_{\Theta_{f,T}}y)&={{\bf
T}_i'}^*\left((I_{\HH^2(f)}\otimes \tau)x\oplus (I_{\HH^2(f)}\otimes
\tau_*)\Delta_{\Theta_{f,T}}y\right)\\
&= {{\bf T}_i'}^*\left((I_{\HH^2(f)}\otimes \tau)x\oplus
\Delta_{\Theta_{f,T'}}(I_{\HH^2(f)}\otimes
\tau_*)y\right)\\
&=(M_{Z_i}^*\otimes I_{\cD_{f,T'}})(I_{\HH^2(f)}\otimes \tau)x\oplus
{D_i'}^*\left(\Delta_{\Theta_{f,T'}}(I_{\HH^2(f)}\otimes
\tau_*)y\right)\\
&= (M_{Z_i}^*\otimes I_{\cD_{f,T'}})(I_{\HH^2(f)}\otimes
\tau)x\oplus {D_i'}^*\left((I_{\HH^2(f)}\otimes
\tau_*)\Delta_{\Theta_{f,T}}y\right)\\
&= (I_{\HH^2(f)}\otimes \tau)(M_{Z_i}^*\otimes I_{\cD_{f,T}})x\oplus
(I_{\HH^2(f)}\otimes
\tau_*){D_i'}^*\left(\Delta_{\Theta_{f,T}}y\right)\\
&=\Gamma{\bf T}_i^*(x\oplus \Delta_{\Theta_{f,T}}y)
\end{split}
\end{equation*}
for any $x\oplus \Delta_{\Theta_{f,T}}y\in {\bf H}_{f,T}$ and
$i=1,\ldots, n$. Consequently,   we obtain ${{\bf T}_i'}^*
(\Gamma|_{{\bf H}_{f,T}})=(\Gamma|_{{\bf H}_{f,T}}){\bf T}_i^*$ for
  $i=1,\ldots,n$.
 Now, using Theorem \ref{funct-model1}, we conclude that $T$ and $T'$ are unitarily equivalent.
  The proof is complete.
\end{proof}

 In what follows we prove that any contractive multi-analytic operator
$\Theta:\HH^2(f)\otimes \cE_*\to \HH^2(f)\otimes
{\cE}\quad(\cE,\cE_*~$ are Hilbert spaces)  with respect to $M_{Z_1},\ldots, M_{Z_n}$ generates a c.n.c.
$n$-tuple of operators
${\bf T}:=({\bf T}_1,\ldots, {\bf T}_n)\in \BB_f({\bf H})$.
We mention that $~\Theta~$ is  called purely contractive if  $\|P_{\cE} \Theta (1\otimes
x)\|<\|x\|$ for any  $x\in \cE_*$.
\begin{theorem}\label{funct-model2} Let  $f=(f_1,\ldots, f_n)$ be   an $n$-tuple  of
 formal power series in the class $\cM^b$ and  let $(M_{Z_1},\ldots, M_{Z_n})$  be the universal
model associated with  the noncommutative domain $\BB_f$.
 Let $~\Theta:\HH^2(f)\otimes \cE_*\to \HH^2(f)\otimes \cE$ be a contractive multi-analytic operator
  and  set
 $~\Delta_\Theta:=(I-\Theta^*\Theta)^{1/2}~$.
Then the  $n$-tuple  ${\bf T}:=({\bf T}_1,\ldots, {\bf T}_n)$
defined on the Hilbert space
$$
{\bf H}:=[ (\HH^2(f)\otimes {\cE})\oplus\overline{\Delta_\Theta
(\HH^2(f)\otimes \cE_*)}]\ominus\{\Theta y\oplus\Delta_\Theta
y:\quad
y\in \HH^2(f)\otimes \cE_*\}
$$
by
$$
{\bf T}_i^*(x\oplus \Delta_\Theta y):=(M_{Z_i}^*\otimes I_{\cE_*}) x
\oplus D_i^*(\Delta_\Theta y),\qquad  i=1,\ldots,n,
$$
where each operator $~D_i~$ is defined by
$$D_i(\Delta_\Theta y):=\Delta_\Theta (M_{Z_i}\otimes I_\cE) y, \quad y\in \HH^2(f)\otimes \cE_*,
$$
is in  $\BB_f^{cnc}({\bf H})$.

Moreover, if  $~\Theta~$ is purely
contractive and
\begin{equation*}
\overline{\Delta_\Theta(\HH^2(f)\otimes
\cE_*)}=\overline{\Delta_\Theta(\vee_{i=1}^n M_{f_i}(\HH^2(f)\otimes
\cE_*))},
\end{equation*}
then $~\Theta~$ coincides with the characteristic function of the
$n$-tuple  ${\bf T}:=({\bf T}_1,\ldots, {\bf T}_n)$.
\end{theorem}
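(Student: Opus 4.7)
The plan is to reduce to the classical theorem for contractive multi-analytic operators on the full Fock space with respect to $S_1,\ldots,S_n$ from \cite{Po-charact}, and then transfer the resulting c.n.c.\ row contraction into $\BB_f^{cnc}({\bf H})$ via the functional calculus with the inverse $g$. The strategy mirrors the proof of Theorem \ref{funct-model1} run in reverse: instead of starting from $T\in\BB_f^{cnc}(\cH)$ and passing to the c.n.c.\ row contraction $f(T)$, I start with an abstract $\Theta$, pass to the associated row contraction ${\bf A}$, and then act by $g$ to obtain ${\bf T}=g({\bf A})$.

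First I transport $\Theta$ via the canonical unitary $U:\HH^2(f)\to F^2(H_n)$, $Uf_\alpha=e_\alpha$, by setting $\widetilde\Theta:=(U\otimes I_\cE)\Theta(U^*\otimes I_{\cE_*})$. Since $M_{Z_i}=U^*\widetilde g_i(S_1,\ldots,S_n)U$ by Lemma \ref{charact-bound}, $\widetilde\Theta$ is a contractive multi-analytic operator with respect to $S_1,\ldots,S_n$, with defect operator satisfying $\Delta_{\widetilde\Theta}=(U\otimes I_{\cE_*})\Delta_\Theta(U^*\otimes I_{\cE_*})$. By the row-contraction version of the theorem (cf.\ \cite{Po-charact}), $\widetilde\Theta$ generates a c.n.c.\ row contraction ${\bf A}=({\bf A}_1,\ldots,{\bf A}_n)$ on the standard Hilbert space $\widetilde\cH$ defined from $\widetilde\Theta$. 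Using Lemma \ref{charact-bound} again, each $g_i$ lies in $H^\infty_{\bf ball}$, so the functional calculus of \cite{Po-funct} produces $\TT_i:=g_i({\bf A})$, and Proposition \ref{ball} gives $\TT\in\BB_f^{cnc}(\widetilde\cH)$. The unitary $\Gamma:=(U^*\otimes I_\cE)\oplus(U^*\otimes I_{\cE_*})$ carries the graph subspace defining $\widetilde\cH$ onto the one defining ${\bf H}$, and hence restricts to a unitary $\widetilde\cH\to{\bf H}$. Setting ${\bf B}_i:=\Gamma\,\TT_i\,\Gamma^{-1}$ produces an $n$-tuple in $\BB_f^{cnc}({\bf H})$.

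The main obstacle is to verify that ${\bf B}_i$ coincides with the operator ${\bf T}_i$ specified in the statement. This should go exactly as in the proof of Theorem \ref{funct-model1}: expand $\langle{\bf B}_i(x\oplus\Delta_\Theta y),x'\oplus\Delta_\Theta y'\rangle$ using the $SOT$-continuous functional calculus for the c.n.c.\ row contraction ${\bf A}$ and invoke $M_{Z_i}=\text{\rm SOT-}\lim_{r\to 1}g_i(rM_{f_1},\ldots,rM_{f_n})$, which holds because $f$ has the model property. For the ``moreover'' assertion, the purely contractive hypothesis and the closure condition on $\Theta$ transfer through $U$ to the analogous hypotheses on $\widetilde\Theta$; the classical coincidence theorem of \cite{Po-charact} then gives that $\widetilde\Theta$ coincides with the characteristic function $\Theta_{\bf A}$ of the row contraction ${\bf A}$. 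Since $f({\bf T})={\bf A}$ by Proposition \ref{ball} and $\Theta_{f,{\bf T}}$ is related to $\Theta_{f({\bf T})}$ by the identity \eqref{2charact}, conjugating back by $\Gamma$ yields that $\Theta$ coincides with $\Theta_{f,{\bf T}}$.
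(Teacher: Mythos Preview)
Your approach is essentially the same as the paper's: transport $\Theta$ via $U$ to a Fock-space multi-analytic operator, construct the c.n.c.\ row contraction ${\bf A}$, apply $g$ via the $F_n^\infty$-functional calculus and Proposition~\ref{ball}, and conjugate back by $\Gamma$, with the identification ${\bf B}_i={\bf T}_i$ done exactly as in Theorem~\ref{funct-model1}. One small imprecision: in the ``moreover'' part you write $f({\bf T})={\bf A}$, but these act on different spaces; what holds is $f(\TT)={\bf A}$ (on $\widetilde\cH$), so either argue via $\TT$ as the paper does, or note that $f({\bf T})$ is unitarily equivalent to ${\bf A}$ and hence their characteristic functions coincide before invoking \eqref{2charact}.
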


\begin{proof}  Since  $f=(f_1,\ldots, f_n)$  has  the model
property, we have $M_{f_j}=f_j(M_{Z_1},\ldots, M_{Z_n})$ where the
$n$-tuple $(M_{Z_1},\ldots, M_{Z_n})$ is either in the convergence
set $\cC_f^{SOT}(\HH^2(f))$ or $\cC_f^{rad}(\HH^2(f))$, and
$g_j(M_{f_1},\ldots, M_{f_n})=M_{Z_j}$ where $(M_{f_1},\ldots,
M_{f_n})$ is  in the convergence set $\cC_g^{rad}(\HH^2(f))$.
Consequently,  we have $\Theta (M_{Z_i}\otimes
I_{\cE_*})=(M_{Z_i}\otimes I_\cE)$, $i=1,\ldots,n$,  if and only if
$\Theta (M_{f_i}\otimes I_{\cE_*})=(M_{f_i}\otimes I_\cE)$,
$i=1,\ldots,n$. Setting $\Psi:=(U\otimes I_\cE)\Theta (U^*\otimes
I_{\cE_*})$, where the canonical unitary operator $U:\HH^2(f)\to
F^2(H_n)$  is defined by
 $Uf_\alpha=e_\alpha$, $\alpha\in \FF_n^+$, we deduce that $\Psi$ is
 a multi-analytic operator on the Fock space $F^2(H_n)$, i.e.,
 $\Psi(S_i\otimes I_{\cE_*})=(S_i\otimes I_\cE)$ for $i=1,\ldots,
 n$.

Define the $n$-tuple ${\bf A}:=({\bf A}_1,\ldots, {\bf A}_n)$ on
the Hilbert
 space
 $$
\widetilde{\cH}:=[ (F^2(H_n)\otimes
\cE)\oplus\overline{\Delta_{\Psi} (F^2(H_n)\otimes
 {\cE_*})}]\ominus
\{\Psi z\oplus\Delta_{\Psi} z:\ \ z\in F^2(H_n)\otimes
 {\cE_*}\},
$$
where  the defect operator $~\Delta_{\Psi}:=(I-\Psi^*\Psi)^{1/2}$
and
  the operator ${\bf A}_i$, $i=1,\ldots, n$, is defined on $\widetilde \cH$
  by setting
\begin{equation*}
 {\bf A}_{i}^*[\omega\oplus\Delta_{\Psi}   z]:=
(S_i^*\otimes I_{\cE}) \omega\oplus C_i^*(\Delta_{\Psi}z), \qquad
i=1,\ldots,n,
\end{equation*}
for   $\omega\in F^2(H_n)\otimes  {\cE} $,  $z\in F^2(H_n)\otimes
 {\cE_*}$,   where $C_i$ is  defined on $\overline{\Delta_{\Psi}
(F^2(H_n)\otimes  {\cE_*})}$ by
$$
C_i(\Delta_{\Psi}z):=\Delta_{\Psi} (S_i \otimes I_{\cE_*}) z, \qquad
i =1,\ldots,n,
$$
 and $S_1,\ldots, S_n$
are the left creation operators on the full Fock space $F^2(H_n)$.

Consider the Hilbert spaces
$$
{\widetilde \cK} :=  (F^2(H_n)\otimes
\cE)\oplus\overline{\Delta_{\Psi} (F^2(H_n)\otimes
 {\cE_*})}
$$
and
$${\widetilde \cG}:= \{\Psi z\oplus\Delta_{\Psi} z:\ \ z\in F^2(H_n)\otimes
 {\cE_*}\}.
$$
Since $\Psi$ is
 a multi-analytic operator on the Fock space $F^2(H_n)$, it is easy to
  see the $[C_1,\ldots,C_n]$ is a row isometry and
$\widetilde \cG$ is invariant under each operator ${\bf
W}_i:=S_i\oplus C_i$, $i=1,\ldots, n$,   acting on  $\widetilde
\cK$. Therefore, ${\bf A}_i^*={\bf W}_i^*|_{\widetilde\cH}$,
$i=1,\ldots, n$. Note also that ${\bf A}=[{\bf A}_1,\ldots, {\bf
A}_n]$ is a c.n.c. row contraction. Indeed, let  $\omega\oplus
\Delta_\Psi z\in \widetilde \cH$ be such that
$\sum_{|\alpha|=k}\|{\bf A}_\alpha^*(\omega\oplus \Delta_\Psi
z)\|^2=\|\omega\oplus \Delta_\Psi z\|^2$ for any $k\in \NN$. Taking
into account that
$\lim_{k\to\infty}\sum_{|\alpha|=k}\|S_\alpha^*\omega\|^2=0$ and
$\sum_{|\alpha|=k}\|C_\alpha^*\Delta_\Psi z\|^2\leq \|\Delta_\Psi
z\|^2$, we deduce that $\omega=0$. On the other hand, since $0
\oplus \Delta_\Psi z\in \widetilde \cH$, we must have $\left<0
\oplus \Delta_\Psi z,\Psi u\oplus\Delta_{\Psi} u\right>=0 $ for any
$ u\in F^2(H_n)\otimes
 {\cE_*}$, which implies $\Delta_{\Psi} z=0$ and proves our
 assertion.

 Since   ${\bf A}$ is a completely non-coisometric row
contractions and $g$, the inverse of $f$ with respect to the
composition, is a bounded free holomorphic function on the unit ball
$[B(\cH)^n]_1$,   it makes sense to talk about $g({\bf
A}):=(g_1({\bf A}),\ldots, g_n({\bf A}))$   using the functional
calculus for c.n.c. row contractions (see \cite{Po-funct}).
  Since $f\in \cM^b$, setting  $\TT:=(\TT_1,\ldots, \TT_n)$, where $\TT_i=g_i({\bf
A})$,   and using  Proposition \ref{ball}, we deduce that  $\TT\in
\BB_f^{cnc}(\widetilde \cH)$. Consider
 the unitary
operator $\Gamma$ acting  from  the Hilbert space $(\HH^2(f)\otimes
\cE)\oplus\overline{\Delta_{\Theta} (\HH^2(f)\otimes \cE_*)}$ to
$(F^2(H_n)\otimes \cE)\oplus\overline{\Delta_{\Psi} (F^2(H_n)\otimes
\cE_*)}$  and defined by
$$
\Gamma:=(U\otimes I_{\cE})\oplus (U\otimes I_{\cE_*}).
$$
As in the proof of Theorem \ref{funct-model1}, one can show that
${\bf T}_i$ is a bounded operator on ${\bf H}$ and  ${\bf
T}_i=(\Gamma|_{\bf H})^{-1} \TT_i (\Gamma|_{\bf H})$, $i=1,\ldots,
n$. Consequently, we have ${\bf T}\in \BB_f^{cnc}({\bf H})$, which
proves the first part of the theorem.

To prove the second part of the theorem, we assume that
$~\Theta~$ is purely contractive, i.e., $\|P_{\cE} \Theta (1\otimes
x)\|<\|x\|$ for any  $x\in \cE_*$, and
\begin{equation*}
\overline{\Delta_\Theta(\HH^2(f)\otimes
\cE_*)}=\overline{\Delta_\Theta(\vee_{i=1}^n M_{f_i}(\HH^2(f)\otimes
\cE_*))}.
\end{equation*}
These conditions imply that $\Psi$ is purely contractive and
\begin{equation*}
\overline{\Delta_\Psi(F^2(H_n)\otimes
\cE_*)}=\overline{\Delta_\Psi[(F^2(H_n)\otimes \cE_*)\ominus
\cE_*]}.
\end{equation*}
According to Theorem 4.1 from \cite{Po-charact},  the multi-analytic
operator $\Psi$ coincides with the characteristic function
$\Theta_{\bf A}$ of the row contraction ${\bf A}=[{\bf A}_1,\ldots,
{\bf A}_n]$.

Note that the characteristic function of  $\TT=g(\bf A)\in \BB_f^{cnc}(\widetilde \cH)$ is
$$\Theta_{f,\TT}=(U^*\otimes I) \Theta_{f(\TT)}(U\otimes I)=(U^*\otimes I) \Theta_{{\bf A}}(U\otimes I).
$$
Since ${\bf
T}_i=(\Gamma|_{\bf H})^{-1} \TT_i (\Gamma|_{\bf H})$, $i=1,\ldots,
n$, we also have that the characteristic functions $\Theta_{f,{\bf T}}$  and  $\Theta_{f,\TT}$ coincide. Combining these results with the fact that
$(U^*\otimes
I_{\cE_*})\Psi(U\otimes I_\cE)=\Theta $, we conclude that $~\Theta~$ coincides with $\Theta_{f,{\bf T}}$.
 The proof is complete.
\end{proof}

\bigskip

\section{  Dilation theory on noncommutative domains  }

In this section, we study the $*$-representations of the
$C^*$-algebra $C^*(M_{Z_1},\ldots, M_{Z_n})$ and obtain a Wold type
decomposition for the nondegenerate $*$-representations. Under
natural conditions on the $n$-tuple $f=(f_1,\ldots, f_n)$ of formal
power series,  we show that any $n$-tuple $T=(T_1,\ldots,T_n)$ of
operators is
 in the noncommutative domain
 $\BB_f(\cH)$,  has  a minimal dilation which is unique up to an isomorphism.
 We also provide a commutant lifting theorem for $\BB_f(\cH)$.

\begin{proposition}\label{compact} Let $f=(f_1,\ldots, f_n)$ be  an  $n$-tuple of formal power
series in  the class $\cM^{||}$ and let $(M_{Z_1},\ldots, M_{Z_n})$
be the universal model associated with $\BB_f$.
   Then all compact
 operators in $B(\HH^2(f))$ are contained in $C^*(M_{Z_1},\ldots,
 M_{Z_n})$.
 \end{proposition}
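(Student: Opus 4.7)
The plan is to transfer the problem, via the natural unitary $U\colon\HH^2(f)\to F^2(H_n)$ defined by $Uf_\alpha=e_\alpha$, to the full Fock space and then invoke the standard fact that the Cuntz--Toeplitz algebra $C^*(S_1,\ldots,S_n)$ contains every compact operator on $F^2(H_n)$. Recall from the earlier part of the excerpt (and from the proof of Lemma \ref{charact-bound}) that under this unitary one has $M_{f_j}=U^{-1}S_jU$ for $j=1,\ldots,n$, so $U$ implements a $*$-isomorphism between the $C^*$-algebra $C^*(M_{f_1},\ldots,M_{f_n})$ on $\HH^2(f)$ and $C^*(S_1,\ldots,S_n)$ on $F^2(H_n)$, and between the compact operators on the two spaces.

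The first step is to observe that, because $f\in\cM^{||}$, each $M_{f_j}$ already lies in $C^*(M_{Z_1},\ldots,M_{Z_n})$. Indeed, writing $f_j=\sum_\alpha a_\alpha^{(j)}Z_\alpha$, the hypothesis asserts that either the partial sums $\sum_{|\alpha|\le m} a_\alpha^{(j)}M_{Z_\alpha}$ converge in operator norm to $M_{f_j}$, or that the radial sums $\sum_{k\ge 0}\sum_{|\alpha|=k} a_\alpha^{(j)}r^{|\alpha|}M_{Z_\alpha}$ converge in norm and possess a norm radial limit equal to $M_{f_j}$. In both cases, $M_{f_j}$ is a norm limit of noncommutative polynomials in $M_{Z_1},\ldots,M_{Z_n}$, hence
\begin{equation*}
M_{f_j}\in C^*(M_{Z_1},\ldots,M_{Z_n}),\qquad j=1,\ldots,n.
\end{equation*}

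Transferring via $U$, this inclusion becomes $S_j\in U\,C^*(M_{Z_1},\ldots,M_{Z_n})\,U^{-1}$ for every $j$, so
\begin{equation*}
C^*(S_1,\ldots,S_n)\ \subseteq\ U\,C^*(M_{Z_1},\ldots,M_{Z_n})\,U^{-1}.
\end{equation*}
It is a classical fact (see, e.g., \cite{Po-von}) that the orthogonal projection onto $\CC 1\subset F^2(H_n)$ equals $I-\sum_{i=1}^n S_iS_i^*$, and that compressing by the monomials $S_\alpha,S_\beta^*$ one obtains every rank-one operator of the form $e_\alpha\otimes e_\beta^*$; taking norm limits recovers the full ideal $\cK(F^2(H_n))$ of compact operators inside $C^*(S_1,\ldots,S_n)$. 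Conjugating back by $U$ (which carries compact operators to compact operators), we conclude that every compact operator on $\HH^2(f)$ lies in $C^*(M_{Z_1},\ldots,M_{Z_n})$.

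The main technical point to be careful about is step one in the radial case: one has to justify that the radial norm-convergence of $\sum a_\alpha^{(j)}r^{|\alpha|}M_{Z_\alpha}$ together with the existence of a norm radial limit actually places the limit $M_{f_j}$ in the norm closure of polynomials in the $M_{Z_i}$. Since this follows directly from the definition of $\cM^{||}$ (the limit is taken in operator norm and each approximant is itself a polynomial in the $M_{Z_i}$), no serious obstacle arises. Beyond this, the argument is an immediate reduction to the well-known fact about $C^*(S_1,\ldots,S_n)$.
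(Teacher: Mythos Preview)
Your proof is correct. Both your argument and the paper's rest on the same key observation: because $f\in\cM^{||}$, each $M_{f_j}=f_j(M_{Z_1},\ldots,M_{Z_n})$ is a norm limit of polynomials in the $M_{Z_i}$ and hence lies in $C^*(M_{Z_1},\ldots,M_{Z_n})$.

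The difference is in packaging. You transfer via the canonical unitary $U$ to the Fock space, invoke the standard fact that $\cK(F^2(H_n))\subset C^*(S_1,\ldots,S_n)$, and conjugate back. The paper instead carries out that very argument directly inside $\HH^2(f)$: it notes that $P_\CC=I-\sum_i M_{f_i}M_{f_i}^*$ lies in $\overline{\text{\rm span}}\{M_{Z_\alpha}M_{Z_\beta}^*:\alpha,\beta\in\FF_n^+\}$, then shows that for polynomials $q,r$ in the $M_{f_i}$ the operator $r(M_Z)P_\CC q(M_Z)^*$ is the rank-one operator $\xi\mapsto\langle\xi,q(M_Z)1\rangle\,r(M_Z)1$, and uses density of such vectors in $\HH^2(f)$ to recover all compacts. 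This is exactly the Cuntz--Toeplitz argument, redone in the new coordinates rather than transported by $U$.

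Your route is cleaner and avoids repeating a known computation. The paper's route is more self-contained and yields a slightly sharper statement---the compacts lie in $\overline{\text{\rm span}}\{M_{Z_\alpha}M_{Z_\beta}^*:\alpha,\beta\in\FF_n^+\}$, not merely in the $C^*$-algebra---which is used later in the Wold-type decomposition (Theorem~\ref{wold}).
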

\begin{proof} Since $f\in \cM^{||}$, the  universal $n$-tuple $(M_{Z_1},\ldots,
 M_{Z_n})$
 is in  the set  of  norm-convergence (or radial norm-convergence)  for
 $f$ and, consequently,  the operator
$f_i(M_{Z})$ is in
 $ \overline{\text{\rm span}} \{M_{Z_\alpha} M_{Z_\beta}^*:\
\alpha,\beta\in \FF_n^+\}$.  Taking into account that
$f=(f_1,\ldots, f_n)$ is   an  $n$-tuple of formal power series
 with the model
 property,  we have $M_{f_i}=f_i(M_{Z_1},\ldots, M_{Z_n})$. On the other hand,  the
orthogonal projection of $\HH^2(f)$ onto the constant power series
satisfies the equation $P_\CC=I-\sum_{i=1}^n f_i(M_{Z})f_i(M_{Z})^*
$. Therefore,  $P_\CC$ is also in the above-mentioned span. Let
$q(M_Z):=\sum_{|\alpha|\leq m}a_\alpha [f(M_Z)]_\alpha$ and let
 $\xi:=\sum_{\beta\in \FF_n^+} b_\beta f_\beta\in \HH^2(f)$. Note
 \begin{equation*}
 \begin{split}
 P_\CC q(M_Z)^*\xi&=P_\CC \sum_{|\alpha|\leq m}\overline{a}_\alpha M_{f_\alpha}^*\xi
 =\sum_{|\alpha|\leq m} \overline{a}_\alpha b_\alpha\\
 &=\left<\xi, \sum_{|\alpha|\leq m} a_\alpha f_\alpha\right>
 =\left<\xi, q(M_Z)1\right>.
 \end{split}
 \end{equation*}
Consequently, if $r(M_Z):=\sum_{|\gamma|\leq p}c_\gamma [f(M_Z)]_\gamma$, then
\begin{equation}
\label{rP}
r(M_Z)P_\CC q(M_Z)^*\xi=\left<\xi,q(M_Z)1\right>r(M_Z)1,
\end{equation}
which shows that  $r(M_Z)P_\CC q(M_Z)^*$ is a rank one operator in
$B(\HH^2(f))$. Taking into account that  the   vectors of the form $
\sum_{|\alpha|\leq m} a_\alpha[f(M_Z)]_\alpha 1$, where $\ m\in
\NN$, $a_\alpha\in \CC$, are dense in $\HH^2(f)$, and using relation
\eqref{rP}, we deduce that all compact operators in $B(\HH^2(f))$
are in $
 \overline{\text{\rm span}} \{M_{Z_\alpha} M_{Z_\beta}^*:\ \alpha,\beta\in
 \FF_n^+\}.
 $
The proof is complete.
\end{proof}

 The next result, is a Wold type decomposition for
nondegenerate $*$-representations of the $C^*$-algebra
$C^*(M_{Z_1},\ldots, M_{Z_n})$.

\begin{theorem}\label{wold} Let $f=(f_1,\ldots, f_n)$ be  an  $n$-tuple of formal power series
in the class $\cM^{||}$ and let $(M_{Z_1},\ldots, M_{Z_n})$ be the
universal model associated with $\BB_f$.  If
$\pi:C^*(M_{Z_1},\ldots, M_{Z_n})\to B(\cK)$ is a nondegenerate
$*$-representation  of \ $C^*(M_{Z_1},\ldots, M_{Z_n})$ on a
separable Hilbert space  $\cK$, then $\pi$ decomposes into a direct
sum
$$
\pi=\pi_0\oplus \pi_1 \  \text{ on  } \ \cK=\cK_0\oplus \cK_1,
$$
where $\pi_0$, $\pi_1$  are disjoint representations of
$C^*(M_{Z_1},\ldots, M_{Z_n})$ on the Hilbert spaces
$$
\cK_0:=\overline{\text{\rm span}}\left\{\pi(M_{Z_\alpha})
\left(I-\sum_{i=1}^n f_i(\pi(M_{Z_1}),\ldots,
\pi(M_{Z_n}))f_i(\pi(M_{Z_1}),\ldots, \pi(M_{Z_n}))^*\right) \cK:\
\alpha\in \FF_n^+\right\}
$$
 and $\cK_1:=\cK_0^\perp$,
 respectively, such that, up to an isomorphism,
\begin{equation}\label{shi}
\cK_0\simeq \HH^2(f)\otimes \cG, \quad  \pi_0(X)=X\otimes I_\cG,
\quad X\in C^*(M_{Z_1},\ldots, M_{Z_n}),
\end{equation}
 for some Hilbert space $\cG$ with
$$
\dim \cG=\dim \left[\text{\rm range}\,\left(I-\sum_{i=1}^n
f_i(\pi(M_{Z_1}),\ldots, \pi(M_{Z_n}))f_i(\pi(M_{Z_1}),\ldots,
\pi(M_{Z_n}))^*\right)\right],
$$
 and $\pi_1$ is a $*$-representation  which annihilates the compact operators   and
$$
\sum_{i=1}^n f_i(\pi_1(M_{Z_1}),\ldots,
\pi_1(M_{Z_n}))f_i(\pi_1(M_{Z_1}),\ldots,
\pi_1(M_{Z_n}))^*=I_{\cK_1}.
$$
Moreover, if $\pi'$ is another nondegenerate  $*$-representation of
$C^*(M_{Z_1},\ldots, M_{Z_n})$ on a separable  Hilbert space
$\cK'$, then $\pi$ is unitarily equivalent to $\pi'$ if and only if
$\dim\cG=\dim\cG'$ and $\pi_1$ is unitarily equivalent to $\pi_1'$.
\end{theorem}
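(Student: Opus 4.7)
The plan is to reduce the statement to the classical Wold decomposition for row isometries (from \cite{Po-isometric}), applied to the $n$-tuple $F_i := \pi(M_{f_i})$. First I would verify that $[F_1,\ldots,F_n]$ is a row isometry on $\cK$: orthonormality of $\{f_\alpha\}_{\alpha\in\FF_n^+}$ in $\HH^2(f)$ gives $M_{f_j}^*M_{f_i}=\delta_{ij}I$ and $\sum_{i=1}^n M_{f_i}M_{f_i}^*=I-P_\CC$, where $P_\CC$ is the rank-one projection onto the constants; applying $\pi$ yields $F_j^*F_i=\delta_{ij}I$ and $\sum_iF_iF_i^*=I-\pi(P_\CC)$. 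Since $f\in\cM^{||}$, the identity $F_i=f_i(\pi(M_{Z_1}),\ldots,\pi(M_{Z_n}))$ holds (norm or radial norm convergence is inherited under $\pi$).

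Next I would apply the classical Wold decomposition to $[F_1,\ldots,F_n]$, producing the wandering space $\cL:=\pi(P_\CC)\cK$, the invariant subspace $\cK_0':=\overline{\text{span}}\{F_\alpha\cL:\alpha\in\FF_n^+\}$, and $\cK_1:=\cK_0'^\perp$, on which $[F_1,\ldots,F_n]$ restricts to a Cuntz row isometry ($\sum F_i|_{\cK_1}F_i|_{\cK_1}^*=I$). The \emph{key technical step} is identifying this $\cK_0'$ with the $\cK_0$ appearing in the statement, which is written using $\pi(M_{Z_\alpha})$ rather than $F_\alpha=\pi(M_{f_\alpha})$. For this I would establish the $C^*$-algebra equality
\[
C^*(M_{Z_1},\ldots,M_{Z_n})=C^*(M_{f_1},\ldots,M_{f_n})\subset B(\HH^2(f)),
\]
using $M_{f_i}=f_i(M_Z)$ (model property plus the norm/radial convergence from $f\in\cM^{||}$) in one direction, and $M_{Z_i}=g_i(M_{f_1},\ldots,M_{f_n})$ from the compositional inverse $g=f^{-1}$ in the other (recall that $(M_{f_1},\ldots,M_{f_n})$ lies in the radial convergence set of $g$ by the model property). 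This equality forces $\cK_0'$ to be reducing for $\pi$ and to be cyclically generated from $\cL$ by either family of operators, giving $\cK_0=\cK_0'$.

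I would then upgrade to a unitary equivalence of $*$-representations by defining $U:\HH^2(f)\otimes\cG\to\cK_0$, with $\cG:=\cL$, via $U(f_\alpha\otimes g):=F_\alpha g$. This is unitary because the vectors $\{F_\alpha g_k\}$ are orthonormal for any orthonormal basis $\{g_k\}$ of $\cG$ (a standard consequence of the wandering property). To verify $U(X\otimes I_\cG)=\pi(X)U$ for $X\in C^*(M_{Z_1},\ldots,M_{Z_n})$, it suffices by continuity and the $*$-algebra structure to check on the generators $X=M_{Z_i}$: expanding $M_{Z_i}f_\alpha=\sum c_\beta^{(i,\alpha)}f_\beta$ in the orthonormal basis of $\HH^2(f)$, both sides equal $\sum c_\beta^{(i,\alpha)}F_\beta g$, since $\pi(M_{Z_i})\pi(M_{f_\alpha})=\pi(M_{Z_i}M_{f_\alpha})=\pi(M_{M_{Z_i}f_\alpha})$. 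This identifies $\pi_0$ with $X\mapsto X\otimes I_\cG$ as required, and $\dim\cG$ is the rank of $I-\sum F_iF_i^*$ by construction.

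Finally, $\pi_1$ annihilates compacts: since $\pi(P_\CC)\cK=\cL\subset\cK_0$, we have $\pi_1(P_\CC)=0$, and by Proposition \ref{compact} (specifically the formula $r(M_Z)P_\CC q(M_Z)^*$ derived in its proof, which exhibits a norm-dense set of rank-one operators) the closed two-sided ideal of compacts is generated by $P_\CC$, so $\pi_1$ kills all of it. This yields the required Cuntz relation on $\cK_1$ and also the disjointness of $\pi_0$ and $\pi_1$: $\pi_0(P_\CC)=P_\CC\otimes I_\cG\neq 0$ while $\pi_1(P_\CC)=0$, so no nontrivial intertwiner exists. For uniqueness, given $\pi'$ on $\cK'$, the decomposition is canonical—$\cK_1'$ is the kernel on the compacts and $\cK_0'$ its orthogonal complement—so $\pi\simeq\pi'$ iff the two pieces match, i.e.\ iff $\dim\cG=\dim\cG'$ and $\pi_1\simeq\pi_1'$. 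The main obstacle I anticipate is purely the identification of $\cK_0$ in Step~3, which is where the hypothesis $f\in\cM^{||}$ (not merely $f\in\cM$) is essential to force the $C^*$-algebra equality.
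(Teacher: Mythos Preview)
Your route via the Wold decomposition for the row isometry $F_i=\pi(M_{f_i})$ is different from the paper's argument, which instead invokes the abstract representation theory of $C^*$-algebras containing the compacts (Proposition~\ref{compact} shows $\cK(\HH^2(f))\subset C^*(M_{Z_1},\ldots,M_{Z_n})$; then any nondegenerate representation splits as a multiple of the identity on the compact part plus a piece annihilating the compacts). Both strategies lead to the same decomposition, but your concrete approach has a real gap at exactly the place you flagged.

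The asserted equality $C^*(M_{Z_1},\ldots,M_{Z_n})=C^*(M_{f_1},\ldots,M_{f_n})$ is not justified by the hypotheses. The inclusion $C^*(M_{f_i})\subseteq C^*(M_{Z_i})$ follows from $f\in\cM^{||}$, but for the reverse you appeal to $M_{Z_i}=g_i(M_{f_1},\ldots,M_{f_n})$ with $(M_{f_1},\ldots,M_{f_n})\in\cC_g^{rad}$. That only gives $M_{Z_i}=\text{SOT-}\lim_{r\to1}g_i(rM_{f_1},\ldots,rM_{f_n})$: each $g_i(rM_f)$ lies in $C^*(M_{f_1},\ldots,M_{f_n})$, but SOT limits need not stay in a $C^*$-algebra. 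Transported by the canonical unitary $U:\HH^2(f)\to F^2(H_n)$, the question becomes whether $\tilde g_i\in C^*(S_1,\ldots,S_n)$, which fails in general for $\tilde g_i\in F_n^\infty$ (already for $n=1$: $H^\infty\not\subset$ Toeplitz algebra). The same issue recurs in your intertwining check: from $\pi(M_{Z_i}M_{f_\alpha})=\pi(M_{Z_if_\alpha})$ you cannot pass to $\sum_\beta c_\beta^{(i,\alpha)}F_\beta g$ by norm continuity of $\pi$ alone.

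Both uses are repairable by exploiting that everything factors through the rank-one projection $P_\CC$. Since $M_{Z_\alpha}P_\CC$ and $M_{f_\alpha}P_\CC$ are rank-one with ranges $\CC Z_\alpha$ and $\CC f_\alpha$, and since polynomials in $Z$ and the $f_\beta$'s are each dense in $\HH^2(f)$, one has $M_{Z_\alpha}P_\CC\in\overline{\text{span}}\{M_{f_\beta}P_\CC\}$ and vice versa \emph{in norm}. This yields $\cK_0=\cK_0'$ and shows $\cK_0$ is $\pi$-reducing directly. For the intertwining, use $g\in\cL=\pi(P_\CC)\cK$ to write $\pi(M_{Z_i}M_{f_\alpha})g=\pi(M_{Z_i}M_{f_\alpha}P_\CC)g$, and then the norm-convergent expansion $M_{Z_i}M_{f_\alpha}P_\CC=\sum_\beta c_\beta^{(i,\alpha)}M_{f_\beta}P_\CC$ passes through $\pi$. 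With this fix your argument goes through; the paper's abstract approach simply sidesteps the issue.
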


\begin{proof}
Since  $f=(f_1,\ldots, f_n)$ is  an  $n$-tuple of formal power
series in the class $\cM^{||}$, Proposition \ref{compact} implies
that all the compact operators   in $B(\HH^2(f))$ are contained in
$C^*(M_{Z_1},\ldots, M_{Z_n})$. The standard theory of
representations of   $C^*$-algebras shows that the representation
$\pi$ decomposes into a direct sum $\pi=\pi_0\oplus \pi_1$ on  $
\cK=\cK_0\oplus \cK_1$, where
$$\cK_0:=\overline{\text{\rm span}}\{\pi(X)\cK:\ X \ \text{ is compact operator    in }
B(\HH^2(f))\}
 \quad \text{ and  }\quad  \cK_1:=\cK_0^\perp,
$$
and the the representations $\pi_j:C^*(M_{Z_1},\ldots, M_{Z_n})\to
\cK_j$ are defined by $\pi_j(X):=\pi(X)|_{\cK_j}$, \ $j=0,1$. The
disjoint representations $\pi_0$, $\pi_1$  are
 such that
 $\pi_1$ annihilates  the compact operators in $B(\HH^2(f))$, and  $\pi_0$
 is uniquely determined by the action of $\pi$ on the ideal  of compact operators
  in $B(\HH^2(f))$.
Taking into account that  every representation of  the compact
operators on $\HH^2(f)$ is equivalent to a multiple of the identity
representation, we deduce  relation \eqref{shi}. Using the  proof of
Theorem \ref{compact}, we deduce that
\begin{equation*}\begin{split}
\cK_0&:=\overline{\text{\rm span}}\{\pi(X)\cK:\ X \ \text{ is
compact operator    in }
B(\HH^2(f))\}\\
&=\overline{\text{\rm span}}\{\pi(M_{Z_\alpha} P_\CC  M_{Z_\beta}^*)\cK:\ \alpha, \beta\in \FF_n^+\}\\
&= \overline{\text{\rm span}}\left\{\pi(M_{Z_\alpha})
\left(I-\sum_{i=1}^n f_i(\pi(M_{Z_1}),\ldots,
\pi(M_{Z_n}))f_i(\pi(M_{Z_1}),\ldots, \pi(M_{Z_n}))^*\right) \cK:\
\alpha\in \FF_n^+\right\}.
\end{split}
\end{equation*}
Now, since $P_\CC =I-\sum_{i=1}^n f_i(M_{Z_1},\ldots,
M_{Z_n})f_i(M_{Z_1},\ldots, M_{Z_n})^*$ is a rank one projection in
the $C^*$-algebra  $C^*(M_{Z_1},\ldots, M_{Z_n})$ , we have
 $$ \sum_{i=1}^n
f_i(\pi_1(M_{Z_1}),\ldots, \pi_1(M_{Z_n}))f_i(\pi_1(M_{Z_1}),\ldots,
\pi_1(M_{Z_n}))^*=I_{\cK_1}
$$ and
$$
\dim \cG=\dim \left[\text{\rm range}\,\pi(P_\CC )\right] =\dim
\left[\text{\rm range}\,\left(I-\sum_{i=1}^n
f_i(\pi(M_{Z_1}),\ldots, \pi(M_{Z_n}))f_i(\pi(M_{Z_1}),\ldots,
\pi(M_{Z_n}))^*\right)\right].
$$
To prove the last part of the theorem, we recall that, according to
the standard theory of representations of $C^*$-algebras, $\pi$ and
$\pi'$ are unitarily equivalent if and only if
 $\pi_0$ and $\pi_0'$ (resp.~$\pi_1$ and $\pi_1'$) are unitarily equivalent.
On the other hand, we proved in \cite{Po-multi} that   the
$C^*$-algebra $C^*(M_{Z_1},\ldots,
 M_{Z_n})$  is irreducible  and, consequently,  the $n$-tuples $(M_{Z_1}\otimes I_{\cK},\ldots, M_{Z_n}\otimes I_{\cK})$ is unitarily  equivalent to
 $(M_{Z_1}\otimes I_{\cK'},\ldots, M_{Z_n}\otimes I_{\cK'})$  if and only if
 $\dim \cK=\dim \cK'$.
Hence, we conclude that $\dim\cG=\dim\cG'$ and
 complete the proof.
\end{proof}

 We
  introduce  the class $\cM_{rad}^{b}$ of all formal power
series $f=(f_1,\ldots, f_n)$ with the property that there is
$\delta\in (0,1)$ such that $rf\in \cM^{b}$ for any $r\in (\delta,
1]$. We remark that in all the examples presented in
\cite{Po-multi}, the corresponding  $n$-tuples $f=(f_1,\ldots, f_n)$
are in the class
 $\cM_{rad}^{||}\subset \cM_{rad}^b$. Moreover, the   $n$-tuple   of polynomials
with property $(\cA)$ are also in  the class $\cM_{rad}^{b}$. The
 {\it coisometric} part of $\BB_f(\cH)$  is defined as the set
$$
\BB^c_f(\cH):=\{X=(X_1,\ldots, X_n)\in \BB_f(\cH):\ \sum_{i=1}^n
f_i(X)f_i(X)^*=I\}.
$$

\begin{proposition} \label{Mb}  Let $f=(f_1,\ldots, f_n)$ be  an  $n$-tuple of formal power
series with the model property and let $g=(g_1,\ldots,g_n)$ be its
inverse with respect to the composition. If $f$ satisfies either one
of the following conditions:
\begin{enumerate}
\item[(i)]
  $
f\in \cM_{rad}^{b}$;
 \item[(ii)]  $f\in \cM_{rad}\cap \cM^{||}$;
 \item[(iii)] $f\in \cM^{||}$ and $g\in \cA_n$,
\end{enumerate}
 then
 $\BB_f(\cH)=g\left([B(\cH)^n]_1^-\right).
 $
Moreover, in this case, $g:[B(\cH)^n]_1^-\to \BB_f(\cH)$ is a
bijection with inverse
 $f:\BB_f(\cH)\to [B(\cH)^n]_1^-$. In particular,
 $g([B(\cH)^n]_1^c)=\BB_f^c(\cH)$.
\end{proposition}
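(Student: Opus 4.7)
The plan is to prove both inclusions of the identity $\BB_f(\cH)=g([B(\cH)^n]_1^-)$ and then read off the bijectivity of $g$ and the coisometric statement as immediate corollaries. The inclusion $\BB_f(\cH)\subseteq g([B(\cH)^n]_1^-)$ is essentially built into the definition: for any $T\in \BB_f(\cH)$, one has $Y:=f(T)\in [B(\cH)^n]_1^-$ and $g(Y)=g(f(T))=T$, so $T$ lies in the image of $g$.

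The substantive content is the reverse inclusion. My approach is to fix $Y\in [B(\cH)^n]_1^-$, make sense of $W:=g(Y)$, and prove the single key identity $f(W)=Y$; once this is established, $\|f(W)\|=\|Y\|\leq 1$ is automatic, and $g(f(W))=g(Y)=W$ is the remaining defining relation for $\BB_f(\cH)$. To define $g(Y)$ on the boundary $\|Y\|=1$, I would separate the three cases. In case (iii), each $g_i\in \cA_n$ has a norm-continuous extension to $[B(\cH)^n]_1^-$ via the disc algebra functional calculus, so $W$ is defined directly. In cases (i) and (ii), I would approximate radially: for $r\in (\delta,1)$ the element $rY$ is a strict row contraction, hence pure, and $g(rY)$ is defined as a bounded free holomorphic function at a point of the open ball. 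Since $\cM^{||}\subset \cM^b$ (case (ii)) and $rf\in \cM^b$ for $r$ close to $1$ (case (i)), Proposition \ref{ball} applies to $rY\in [B(\cH)^n]_1^{pure}$ and yields $g(rY)\in \BB_f^{pure}(\cH)$ with $f(g(rY))=rY$. The final step is to take a limit as $r\to 1$, in norm (case (iii)) or in SOT (cases (i) and (ii)), defining $W$ as the limit and recovering $f(W)=Y$.

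Bijectivity of $g:[B(\cH)^n]_1^-\to \BB_f(\cH)$ is then transparent from the identities $f(g(Y))=Y$ and $g(f(T))=T$, and the coisometric assertion $g([B(\cH)^n]_1^c)=\BB_f^c(\cH)$ follows from the pointwise equality $f_i(W)=Y_i$, which converts $\sum_i Y_iY_i^*=I$ into $\sum_i f_i(W)f_i(W)^*=I$ and vice versa.

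The main obstacle I expect lies in cases (i) and (ii): for a general (not necessarily c.n.c.) row contraction $Y\in [B(\cH)^n]_1^-$, one must both verify that $\text{SOT-}\lim_{r\to 1} g(rY)$ exists and that the limit $W$ satisfies $f(W)=Y$. The radial boundedness built into $\cM_{rad}^b$ provides uniform operator-norm bounds on the partial sums $\sum_{|\alpha|\leq m}a_\alpha^{(i)}r^{|\alpha|}M_{Z_\alpha}$, which should be precisely what is needed to interchange the radial limit $r\to 1$ with the summation in the formal series expansion of $f\circ g$, and to conclude the identity $f(W)=Y$ from its $r<1$ analogue $f(g(rY))=rY$.
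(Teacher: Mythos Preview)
Your strategy, the easy inclusion, case (iii), and the final deductions about bijectivity and the coisometric part are all fine. The gap is in cases (i) and (ii), exactly where you flag it. You propose to apply Proposition~\ref{ball} at $rY$ (obtaining $f(g(rY))=rY$ for $r<1$) and then send $r\to 1$, but the left side is itself a SOT limit over word length, so you face a genuine double-limit problem; the uniform bound $\sup_m\|\sum_{|\alpha|\leq m}c_\alpha^{(i)}M_{Z_\alpha}\|<\infty$ does transfer from the universal model to $g(rY)$ via the Poisson kernel, yet uniform boundedness alone does not justify interchanging the two limits. You also owe an explanation of why $g(Y)$ is even defined for $Y$ on the boundary: this holds because $f\in \cM_{rad}$ forces each $g_i$ to be free holomorphic on some $[B(\cH)^n]_\gamma$ with $\gamma>1$, so $g(Y)$ is given by a norm-convergent series and $g(rY)\to g(Y)$ in norm---a fact you should invoke explicitly rather than defining $W$ as a bare SOT limit.

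The paper avoids the double limit by a different device. For a \emph{fixed} $r\in(\delta,1)$, the model property of $rf$ yields the identity $\tfrac{1}{r}S_j=f_j\bigl(g_1(\tfrac{1}{r}S),\ldots,g_n(\tfrac{1}{r}S)\bigr)$ at the level of the left creation operators, with each $g_i(\tfrac{1}{r}S)\in \cA_n$ (since $g$ is holomorphic past radius $1$) and with the $\cM_{rad}^b$ bounds controlling the partial sums of $f_j$ evaluated at $g(\tfrac{1}{r}S)$. One then applies the Poisson transform $P_{rX}$: since $rX$ is a strict (hence pure) row contraction, $P_{rX}$ is multiplicative on $\cA_n$ and SOT-continuous on bounded sets, and a direct computation gives $P_{rX}[\tfrac{1}{r}S_j]=X_j$ and $P_{rX}[g_\alpha(\tfrac{1}{r}S)]=[g(X)]_\alpha$. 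This transports the identity directly to $X_j=f_j(g(X))$ for every $X\in[B(\cH)^n]_1^-$, with no limit $r\to 1$ ever taken. The radial parameter serves only to rescale the universal model and to make $rX$ pure so that $P_{rX}$ has the required continuity.
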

\begin{proof} Assume that condition  $(i)$ holds.
Since  $\BB_f(\cH)\subseteq g\left([B(\cH)^n]_1^-\right)$, it
remains to prove the reverse inclusion. Let    $Y:=g(X)$ with
$X=(X_1,\ldots,X_n)\in [B(\cH)^n]_1^-$. Assume that
$f_i:=\sum_{\alpha\in \FF_n^+} c_\alpha^{(i)} Z_\alpha$,
$i=1,\ldots,n$. Since $f=(f_1,\ldots, f_n)$
 has the
 radial approximation property, $g_i:=\sum_{\alpha\in \FF_n^+} a_\alpha^{(i)} Z_\alpha$
 is a free holomorphic function on $[B(\cH)^n]_\gamma$ for some $\gamma>1$. Moreover,
 there is \ $\delta\in (0,1)$ with the property   that   for any $r\in (\delta,1]$,
 the series
  $g_i(\frac{1}{r}S):=\sum_{k=0}^\infty\sum_{|\alpha|=k} \frac{a_\alpha^{(i)}}{r^{|\alpha|}} S_\alpha$
   is convergent in the operator norm topology and represents   an element
    in  the noncommutative disc algebra $\cA_n$,
and
  $$
  \frac{1}{r} S_j=f_j\left(g_1\left(\frac{1}{r} S\right),\ldots,
  g_n\left(\frac{1}{r} S\right)\right),\qquad j\in\{1,\ldots, n\}, \ r\in (\delta,
  1],
  $$
where  $g(\frac{1}{r} S)$ is in the SOT-convergence (or radial
SOT-convergence) of $f$ and $S=(S_1,\ldots, S_n)$ is the $n$-tuple
of left creation operators on the Fock space $F^2(H_n)$. Since $f\in
\cM^b_{rad}$, we
  deduce that one of the following conditions holds:
  \begin{enumerate}
  \item[(a)]
$
 \frac{1}{r}S_i=\text{\rm SOT-}\lim_{m\to\infty}\sum_{|\alpha|\leq m} c_\alpha^{(i)}
 {g}_{\alpha}\left(\frac{1}{r} S\right) $
 and $$\sup_{m\in
\NN}\left\|\sum_{|\alpha|\leq m} c_\alpha^{(i)}
 {g}_{\alpha} \left(\frac{1}{r} S\right)\right\|<\infty,\qquad i=1,\ldots,n;$$
 \item[(b)]
  $\frac{1}{r}S_i=\text{\rm SOT-}\lim_{\gamma\to 1}\sum_{k=0}^\infty \sum_{|\alpha|=k} c_\alpha^{(i)}
  \gamma^{|\alpha|}
 {g}_{\alpha}\left(\frac{1}{r} S\right)$  and
 $$\sup_{\gamma\in
  [0,1)}\left\|\sum_{k=0}^\infty \sum_{|\alpha|=k} c_\alpha^{(i)}
  \gamma^{|\alpha|}
 {g}_{\alpha}\left(\frac{1}{r} S\right)\right\|<\infty, \qquad i=1,\ldots,n. $$
\end{enumerate}
As  in the proof of Proposition \ref{ball}, using    the fact that
the noncommutative Poisson transform $P_{rX}$, $r\in (\delta,1)$, is
$SOT$-continuous on bounded sets, we deduce that
  that $X_j=f_j(g(X))$ for $j=1,\ldots,n$. This also shows that
$g$ is one-to-one on $[B(\cH)^n]_1^-$. On the other hand, the
relation above implies $Y=g(X)=g(f(g(X)))=g(f(Y))$ and $\|f(Y)\|\leq
1$, which shows that $Y\in \BB_f(\cH)$. Therefore,
$\BB_f(\cH)=g\left([B(\cH)^n]_1^-\right)$ and $f$ is one-to-one on
$\BB_f(\cH)$. Hence, we also deduce that
$g([B(\cH)^n]_1^c)=\BB_f^c(\cH)$. Similarly, one can prove this
proposition when condition $(ii)$ or $(iii)$ holds. The proof is
complete.
\end{proof}

\begin{theorem}\label{rep1} Let  $f=(f_1,\ldots, f_n)$ be    an $n$-tuple of formal power
series with the model property and let $(M_{Z_1},\ldots, M_{Z_n})$
be the universal model associated with $\BB_f$. If  $f\in \cM^{||}$
and $\pi$ is a $*$-representation of $C^*(M_{Z_1},\ldots, M_{Z_n})$,
then
$$
[f_1(\pi(M_{Z_1}),\ldots, \pi(M_{Z_n})), \ldots,
f_n(\pi(M_{Z_1}),\ldots, \pi(M_{Z_n}))]
$$
is a row isometry.

Conversely, if $f\in \cM_{rad}^b$  or $f\in \cM_{rad}\cap \cM^{||}$, and $[W_1,\ldots, W_n]\in B(\cK)^n$
is a row isometry, then  there is a unique
 $*$-representation $\pi:C^*(M_{Z_1},\ldots,
M_{Z_n})\to B(\cK)$ such that $ \pi(M_{Z_i})=g_i(W_1,\ldots, W_n)$,
$ i=1,\ldots,n$, where $g=(g_1,\ldots,g_n)$ is the inverse of $f$
with respect to the composition.
\end{theorem}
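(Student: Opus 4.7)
The plan is to exploit the canonical unitary $U:\HH^2(f)\to F^2(H_n)$ with $Uf_\alpha=e_\alpha$ that appeared in Lemma~\ref{charact-bound}; it intertwines $M_{f_j}$ with the left creation operator $S_j$, so $[M_{f_1},\ldots,M_{f_n}]$ is already known to be a row isometry on $\HH^2(f)$.

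For the forward implication I would use that $f\in\cM^{||}$ means the series $\sum_\alpha a^{(j)}_\alpha M_{Z_\alpha}$ converges to $M_{f_j}$ in the operator norm (or radially in norm). Applying the norm-contractive $*$-homomorphism $\pi$ to the partial sums and passing to the limit gives $f_j(\pi(M_{Z_1}),\ldots,\pi(M_{Z_n}))=\pi(M_{f_j})$ for each $j$. Since $[M_{f_1},\ldots,M_{f_n}]$ is a row isometry and $\pi$ is a $*$-homomorphism, the image $[\pi(M_{f_1}),\ldots,\pi(M_{f_n})]$ is automatically a row isometry, which is the first half.

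For the converse the main step will be to establish the C$^*$-algebra equality
\[
C^*(M_{Z_1},\ldots,M_{Z_n})=C^*(M_{f_1},\ldots,M_{f_n}).
\]
One containment is immediate from $f\in\cM^{||}$, which makes each $M_{f_j}=f_j(M_Z)$ a norm limit of polynomials in the $M_{Z_i}$'s. For the reverse containment I plan to show, via the unitary $U$, that the model boundary function $\widetilde g_i$ lies in $C^*(S_1,\ldots,S_n)$. Using the hypothesis $f\in\cM_{rad}^{b}$ (resp.\ $f\in\cM_{rad}\cap\cM^{||}$) one can pick $r\in(\delta,1)$ with $rf\in\cM^{b}$; as in the proof of Proposition~\ref{Mb}, the inverse of $rf$ is then $g(\cdot/r)$, a bounded free holomorphic function on a noncommutative ball of radius $1/r>1$, so the Taylor series of $g_i$ converges in operator norm when evaluated at $S=(S_1,\ldots,S_n)$, its partial sums being polynomials in $S_1,\ldots,S_n$. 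Hence $\widetilde g_i\in C^*(S_1,\ldots,S_n)$ and therefore $M_{Z_i}\in C^*(M_{f_1},\ldots,M_{f_n})$.

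With this identification, the rest of the converse is routine. For the given row isometry $[W_1,\ldots,W_n]$ on $\cK$ the universal property of the Cuntz-Toeplitz algebra $C^*(S_1,\ldots,S_n)$ furnishes a unique unital $*$-representation $\rho:C^*(S_1,\ldots,S_n)\to B(\cK)$ with $\rho(S_i)=W_i$. Conjugating by $U$ yields $\pi:C^*(M_{f_1},\ldots,M_{f_n})\to B(\cK)$ with $\pi(M_{f_i})=W_i$, which by the equality above is a $*$-representation of $C^*(M_{Z_1},\ldots,M_{Z_n})$; applying $\pi$ to the norm-convergent identity $M_{Z_i}=g_i(M_{f_1},\ldots,M_{f_n})$ gives $\pi(M_{Z_i})=g_i(W_1,\ldots,W_n)$, as required. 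Uniqueness is immediate, since any candidate $\pi'$ must satisfy $\pi'(M_{f_i})=f_i(\pi'(M_Z))=f_i(g(W))=W_i$ by the forward direction together with $f\circ g=\mathrm{id}$, and hence agrees with $\pi$ on a generating set. I expect the main obstacle to be the norm-convergence verification for the series defining $\widetilde g_i$ at $S$, which is precisely where the radial hypothesis on $f$ enters; the remaining steps are formal.
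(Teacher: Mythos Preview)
Your forward direction coincides with the paper's argument. For the converse, however, your route is genuinely different from the paper's and in some respects more direct. The paper does not pass through $C^*(S_1,\ldots,S_n)$; instead it first invokes a result from \cite{Po-multi} giving, for any $T\in\BB_f(\cK)$ with $f\in\cM_{rad}$, a unique unital completely contractive map $\pi:C^*(M_{Z_1},\ldots,M_{Z_n})\to B(\cK)$ satisfying $\pi(M_{Z_\alpha}M_{Z_\beta}^*)=T_\alpha T_\beta^*$ (the noncommutative Poisson transform at $T=g(W)$), and then checks by hand that this map is multiplicative by computing $g_i(W)^*g_j(W)=\langle Z_j,Z_i\rangle_{\HH^2(f)}\,I_\cK$ directly from the row-isometry relations $W_\alpha^*W_\beta=\delta_{\alpha\beta}I$. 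Your use of the universal property of the Cuntz--Toeplitz algebra bypasses both the Poisson transform and this verification, which is a gain in simplicity.

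There is one point to tighten. You assert the equality $C^*(M_{Z_1},\ldots,M_{Z_n})=C^*(M_{f_1},\ldots,M_{f_n})$ and justify the containment $C^*(M_f)\subseteq C^*(M_Z)$ by ``$f\in\cM^{||}$''. But the converse hypothesis is $f\in\cM_{rad}^b$ or $f\in\cM_{rad}\cap\cM^{||}$, and the first alternative does not force $f\in\cM^{||}$; so that containment is unproven in general. Fortunately you do not need it: the radial hypothesis alone makes $g$ free holomorphic on $[B(\cH)^n]_\gamma$ for some $\gamma>1$, hence $M_{Z_i}=g_i(M_{f_1},\ldots,M_{f_n})$ with norm-convergent series, giving $C^*(M_Z)\subseteq C^*(M_f)$. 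Simply restrict the representation furnished by the Cuntz--Toeplitz universal property to this subalgebra. The same issue recurs in your uniqueness argument, where you write $\pi'(M_{f_i})=f_i(\pi'(M_Z))$: this identity requires passing $\pi'$ through a norm limit, i.e.\ $f\in\cM^{||}$, and $M_{f_i}$ need not even lie in $C^*(M_Z)$. But uniqueness is in fact immediate, since a $*$-representation of $C^*(M_{Z_1},\ldots,M_{Z_n})$ is determined by its values on the generators $M_{Z_i}$, and any candidate sends these to $g_i(W)$ by hypothesis.
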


\begin{proof}
Let   $f_i$ have the representation $f_i=\sum_{\alpha\in \FF_n^+}
c_\alpha^{(i)} Z_\alpha$. Assuming that  $f\in \cM^{||}$, we deduce
that $ f_i(M_{Z_1},\ldots, M_{Z_n})=\sum_{k=o}^\infty
\sum_{|\alpha|=k} c_\alpha^{(i)} M_{Z_\alpha}$ or $
f_i(M_{Z_1},\ldots, M_{Z_n})=\lim_{r\to 1} \sum_{k=0}^\infty
\sum_{|\alpha|=k} r^{|\alpha|} c_\alpha^{(i)} M_{Z_\alpha}$ where
the convergence is in the operator norm topology. In either case, if
$\pi:C^*(M_{Z_1},\ldots, M_{Z_n})\to B(\cK)$  is a
$*$-representation, we have
$$
 \pi(f_i(M_{Z_1},\ldots, M_{Z_n}))=f_i(\pi(M_{Z_1}),\ldots,
\pi(M_{Z_n})),\qquad i=1,\ldots,n,
$$
and, taking into account that $f_i(M_{Z_1},\ldots,
M_{Z_n})=M_{f_i}$, we obtain
\begin{equation*}
 f_i(\pi(M_{Z_1}),\ldots,
\pi(M_{Z_n}))^*f_i(\pi(M_{Z_1}),\ldots, \pi(M_{Z_n})) =\pi(M_{f_i}^*
M_{f_i})=\delta_{ij}I_\cK
\end{equation*}
for any $i,j=1,\ldots,n$. Therefore, $ [f_1(\pi(M_{Z_1}),\ldots,
\pi(M_{Z_n})), \ldots, f_n(\pi(M_{Z_1}),\ldots, \pi(M_{Z_n}))] $ is
a row isometry.

Conversely, assume that $f\in \cM_{rad}^b$  or $f\in \cM_{rad}\cap \cM^{||}$ and $[W_1,\ldots, W_n]\in
B(\cK)^n$ is a row isometry. Let $g=(g_1,\ldots, g_n)$ be the
inverse of $f$ with respect to the composition and let
$g_i:=\sum_{\alpha\in \FF_n^+} a_\alpha^{(i)} Z_\alpha$. Since $f$ has the radial approximation property, we deduce that $g$  is a free holomorphic function on
$[B(\cH)^n]_\gamma$ for some $\gamma >1$. Consequently,
$g_i(W_1,\ldots, W_n)=\sum_{k=0}^\infty \sum_{|\alpha|=k}
a_\alpha^{(i)} W_\alpha$, where the convergence is in the operator
norm topology. Applying Proposition \ref{Mb}, we deduce that
$(g_1(W),\ldots, g_n(W))\in \BB_f(\cK)$ and $f_i(g_1(W),\ldots,
g_n(W))=W_i$ for $i=1,\ldots, n$.

According to \cite{Po-multi}, since $f=(f_1,\ldots, f_n)$  has the
radial approximation property
   and $(g_1(W),\ldots, g_n(W))\in \BB_f(\cK)$,    there is
    a unique unital completely contractive linear map
$$
\pi: C^*(M_{Z_1},\ldots, M_{Z_n})\to B(\cK)
$$
such that
  \begin{equation}
 \label{pig}
\pi(M_{Z_\alpha} M_{Z_\beta}^*)=g_\alpha (W)g_\beta(W)^*, \qquad
\alpha,\beta\in \FF_n^+.
\end{equation}
On the other hand, since $[W_1,\ldots, W_n]\in B(\cK)^n$ is a row
isometry, we deduce that
\begin{equation*}
\begin{split}
\left< g_i(W)^* g_j(W)x,y\right>&=
\left<\sum_{k=0}^\infty\sum_{|\beta|=k} a_\beta^{(j)} W_\beta
x,\sum_{k=0}^\infty\sum_{|\alpha|=k} a_\alpha^{(i)} W_\alpha
y\right>\\
&=  \lim_{m\to\infty} \left<\sum_{|\alpha|\leq
m}\sum_{k=0}^\infty\sum_{|\beta|=k} \overline{a_\alpha^{(i)}}
a_\beta^{(j)} W_\alpha^*W_\beta x,
y\right>\\
&=  \lim_{m\to\infty} \left<\sum_{|\alpha|\leq
m}\sum_{k=0}^\infty\sum_{|\beta|=k} \overline{a_\alpha^{(i)}}
a_\beta^{(j)}  \delta_{\alpha \beta} x,
y\right>\\
&= \lim_{m\to\infty}\sum_{|\alpha|\leq m}\overline{a_\alpha^{(i)}}
a_\alpha^{(j)} \left<x,y\right>\\
&= \sum_{k=0}^\infty \sum_{|\alpha|=k}\overline{a_\alpha^{(i)}}
a_\alpha^{(j)} \left<x,y\right>\\
&=\left< Z_j,Z_i\right>_{\HH^2(f)}\left<x, y\right>
\end{split}
\end{equation*}
for any   $x,y\in \cK$. Hence, and using the fact  that
$$
M_{Z_i}^* M_{Z_j}=\left< Z_j,Z_i\right>_{\HH^2(f)}
I_{\HH^2(f)},\qquad i,j\in \{1,\ldots,n\},
$$
we deduce that $\pi(M_{Z_i}^* M_{Z_j})=\pi(M_{Z_i})^*\pi(M_{Z_j})$.
Therefore,  taking into account  relation \eqref{pig} and the fact
that $C^*(M_{Z_1},\ldots,
 M_{Z_n})$   coincides with
 $
 \overline{\text{\rm span}} \{M_{Z_\alpha} M_{Z_\beta}^*:\ \alpha,\beta\in
 \FF_n^+\},
 $
we conclude that $\pi$ is a $*$-representation of
$C^*(M_{Z_1},\ldots,
 M_{Z_n})$. The
proof is complete.
\end{proof}

\begin{corollary}\label{rep2}
Let $f=(f_1,\ldots, f_n)$ be    an $n$-tuple of formal power series
in the set $\cM_{rad}\cap \cM^{||}$. Then  any  $*$-representation
$\pi:C^*(M_{Z_1},\ldots, M_{Z_n})\to B(\cK)$ is generated  by a row
isometry $[W_1,\ldots, W_n]$, $W_i\in B(\cK)$, such that
$$
\pi(M_{Z_i})=g_i(W_1,\ldots, W_n),\qquad i=1,\ldots,n.
$$
\end{corollary}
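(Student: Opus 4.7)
The plan is to produce the row isometry directly from the forward direction of Theorem \ref{rep1}, and then recover $\pi(M_{Z_i})$ from it by invoking the inverse relation $g\circ f=\mathrm{id}$ on the universal model, using the extra regularity of $g$ that comes from $f\in\cM_{rad}$.

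First, since $f\in\cM^{||}$, the forward implication of Theorem \ref{rep1} applies to $\pi$ and produces a row isometry
$$
[W_1,\ldots,W_n]:=[f_1(\pi(M_{Z_1}),\ldots,\pi(M_{Z_n})),\ldots,f_n(\pi(M_{Z_1}),\ldots,\pi(M_{Z_n}))].
$$
Because $f\in\cM^{||}$, each defining series $\sum_{\alpha}c^{(i)}_\alpha M_{Z_\alpha}$ (or its Abel radial variant) converges in operator norm, so $W_i=\pi(f_i(M_Z))=\pi(M_{f_i})$ with genuine norm convergence of the series in $W_i$ after applying $\pi$.

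Next, I would use the radial approximation property $f\in\cM_{rad}$: since $rf$ still has the model property for $r$ close to $1$, its compositional inverse is $\tfrac{1}{r}g$, and a standard radius-comparison argument (the one already used in the proof of Proposition \ref{Mb}) shows that $g=(g_1,\ldots,g_n)$ is free holomorphic on $[B(\cH)^n]_\gamma$ for some $\gamma>1$. Consequently, for any row contraction $V=(V_1,\ldots,V_n)$, the series $g_i(V)=\sum_\alpha a^{(i)}_\alpha V_\alpha$ converges in operator norm. In particular this applies to the row isometry $[M_{f_1},\ldots,M_{f_n}]$, so the identity
$$
M_{Z_i}=g_i(M_{f_1},\ldots,M_{f_n}),\qquad i=1,\ldots,n,
$$
which holds because $(M_{Z_1},\ldots,M_{Z_n})\in\BB_f(\HH^2(f))$, is actually a norm-convergent expansion. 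Applying the norm-continuous $*$-homomorphism $\pi$ term by term yields
$$
\pi(M_{Z_i})=\sum_{\alpha}a^{(i)}_\alpha\,\pi(M_{f_\alpha})=\sum_\alpha a^{(i)}_\alpha W_\alpha=g_i(W_1,\ldots,W_n),
$$
where in the last step I use that $[W_1,\ldots,W_n]$ is a row contraction and $g$ has radius of convergence strictly larger than $1$, so the series converges in norm and can be identified with $g_i(W)$.

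The only delicate point — and the step I expect to be the main obstacle — is precisely this norm-convergence issue: the relation $g(f(X))=X$ in the definition of $\BB_f(\cH)$ is a priori only an SOT (or radial SOT) identity, while $\pi$ is not SOT-continuous on $C^*(M_{Z_1},\ldots,M_{Z_n})$. The radial approximation property $f\in\cM_{rad}$ is exactly what upgrades this to a norm-convergent expansion on the row isometry $[M_{f_1},\ldots,M_{f_n}]$, which is what allows $\pi$ to be pushed inside the series. Uniqueness of $\pi$ is then automatic, or can alternatively be read off from the converse direction of Theorem \ref{rep1}, since the assumption $f\in\cM_{rad}\cap\cM^{||}$ places us in one of its two hypotheses.
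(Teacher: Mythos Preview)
Your proof is correct and is essentially the argument the paper has in mind: apply the forward direction of Theorem \ref{rep1} to obtain the row isometry $W_i=\pi(M_{f_i})$, then use the fact that $f\in\cM_{rad}$ forces $g$ to be free holomorphic on a ball of radius $\gamma>1$, so that the identity $M_{Z_i}=g_i(M_{f_1},\ldots,M_{f_n})$ holds as a norm-convergent series and can be pushed through $\pi$. The paper states the corollary without proof precisely because this is the immediate combination of the two halves of Theorem \ref{rep1} together with the norm-convergence of $g$ established in the proof of Proposition \ref{Mb}; your identification of the SOT-versus-norm issue as the only delicate point, and its resolution via $\cM_{rad}$, is exactly right.
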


We remark that, in the particular case  when $f\in \cM_{rad}\cap
\cM^{||}$, one can use Corollary \ref{rep2} and the Wold
decomposition for isometries with orthogonal ranges
\cite{Po-isometric}, to provide another proof of Theorem \ref{wold}.

Let $T:=(T_1,\ldots, T_n)\in \BB_f(\cH)$. We say that an $n$-tuple
$V:=(V_1,\ldots, V_n)$ of operators on a Hilbert space $\cK\supseteq
\cH$ is a minimal dilation of $T$ if the following properties are
satisfied:
\begin{enumerate}
\item[(i)] $(V_1,\ldots, V_n)\in \BB_f(\cK)$;
\item[(ii)] there  is a $*$-representation
$\pi:C^*(M_{Z_1},\ldots, M_{Z_n})\to B(\cK)$ such that
$\pi(M_{Z_i})=V_i$, $i=1,\ldots,n$;
\item[(iii)]$V_i^*|_\cH=T_i^*$, $i=1,\ldots,n$;
\item[(iv)] $\cK=\bigvee_{\alpha\in \FF_n^+} V_\alpha \cH$.
\end{enumerate}
Without the condition (iv), the $n$-tuple $V$ is called dilation of
$T$. We remark that if $f\in \cM_{rad}\cap \cM^{||}$, then the
condition (i) is a consequence of (ii).

\begin{theorem}\label{dilation} Let $f=(f_1,\ldots, f_n)$ be    an $n$-tuple of formal
 power series   in the set $\cM_{rad}^b$ or $\cM_{rad}\cap \cM^{||}$
 and let $g=(g_1,\ldots, g_n)$ be its
 inverse with respect to the composition. If $T=(T_1,\ldots,T_n)$ is
 in the noncommutative domain
 $\BB_f(\cH)$, then  it has a minimal dilation which is unique up to an isomorphism. Moreover,
  its minimal dilation coincides with  $(g_1(W),\ldots, g_n(W))$,
 where $W=(W_1,\ldots, W_n)$ is
 the minimal isometric dilation  of the row contraction
 $(f_1(T ), \ldots, f_n(T ))$
 on a Hilbert space $\cK\supseteq \cH$.
\end{theorem}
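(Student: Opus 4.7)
My plan is to build the minimal dilation by transferring through the change of variable $g$ from the classical minimal isometric dilation of the row contraction $f(T)=[f_1(T),\ldots,f_n(T)]$, then verify each of the four conditions (i)--(iv) and establish uniqueness. First, since $T\in\BB_f(\cH)$, by definition $\|f(T)\|\leq 1$, so $R:=[f_1(T),\ldots,f_n(T)]$ is a row contraction on $\cH$. I would invoke the Frazho--Bunce--Popescu minimal isometric dilation theorem (\cite{Po-isometric}) to obtain a row isometry $W=[W_1,\ldots,W_n]$ on some $\cK\supseteq\cH$ with $W_i^*|_\cH=f_i(T)^*$ and $\cK=\bigvee_{\alpha\in\FF_n^+}W_\alpha\cH$. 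Since $f\in\cM_{rad}^b$ or $f\in\cM_{rad}\cap\cM^{||}$, Theorem \ref{rep1} produces a unique $*$-representation $\pi:C^*(M_{Z_1},\ldots,M_{Z_n})\to B(\cK)$ with $\pi(M_{Z_i})=g_i(W_1,\ldots,W_n)$. Defining $V_i:=g_i(W)$ for $i=1,\ldots,n$ gives condition (ii) immediately, and Proposition \ref{Mb} yields $V=(V_1,\ldots,V_n)\in\BB_f(\cK)$ together with the crucial reciprocal identity $f(V)=W$; this covers (i).

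For condition (iii), co-invariance of $\cH$ under each $W_i$ combined with $W_i^*|_\cH=f_i(T)^*$ yields $W_\alpha^*|_\cH=f_\alpha(T)^*$ for every $\alpha\in\FF_n^+$. Writing $g_i=\sum_\alpha a_\alpha^{(i)}Z_\alpha$, the defining series for $g_i(W)$ converges (in the appropriate radial/SOT sense guaranteed by our hypothesis) and its adjoint restricted to $\cH$ collapses to $\sum_\alpha\overline{a_\alpha^{(i)}}f_\alpha(T)^*=g_i(f(T))^*$. Since $T\in\BB_f(\cH)$ means exactly $g(f(T))=T$, I obtain $V_i^*|_\cH=T_i^*$. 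For (iv), each $V_i=g_i(W)$ lies in the norm-closed algebra generated by $W_1,\ldots,W_n$ and $I$, while conversely the identity $W=f(V)$ expresses each $W_i$ in the corresponding algebra for $V$; hence the two subspaces $\bigvee_\alpha V_\alpha\cH$ and $\bigvee_\beta W_\beta\cH$ coincide, and minimality of $W$ transfers to minimality of $V$.

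Uniqueness is then a symmetric reversal. Given any other minimal dilation $V'=(V_1',\ldots,V_n')$ on $\cK'$, the $*$-representation in (ii) together with Theorem \ref{rep1} shows that $f(V'):=[f_1(V'),\ldots,f_n(V')]$ is a row isometry on $\cK'$. Repeating the co-invariance argument gives $f_i(V')^*|_\cH=f_i(T)^*$, and the minimality $\cK'=\bigvee_\alpha V'_\alpha\cH$ transfers to $\cK'=\bigvee_\beta f_\beta(V')\cH$ via $V'=g(f(V'))$ (Proposition \ref{Mb}). Thus $f(V')$ is a minimal isometric dilation of $f(T)$. By the uniqueness part of the Frazho--Bunce--Popescu theorem there is a unitary $U:\cK\to\cK'$ with $U|_\cH=I$ and $UW_i=f_i(V')U$; passing this intertwining through $g$ gives $UV_i=V_i'U$, which is the desired isomorphism of dilations.

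The principal technical delicacy, and where the hypothesis $f\in\cM_{rad}^b$ or $\cM_{rad}\cap\cM^{||}$ is essential, is in legitimizing the evaluations $g_i(W)$ and $f_i(V')$ as bona fide operators that interchange properly with adjoints and restriction to the co-invariant subspace $\cH$. This is precisely what Proposition \ref{Mb} (which makes $g$ a free holomorphic function on a neighborhood of $[B(\cK)^n]_1^-$ with $g$ and $f$ mutually inverse on the relevant sets) and Theorem \ref{rep1} (which provides the $*$-representation identifying $V$ with the image of $M_{Z_i}$) are designed to supply.
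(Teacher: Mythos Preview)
Your existence argument is essentially identical to the paper's: both construct the dilation as $V=g(W)$ from the minimal isometric dilation $W$ of the row contraction $f(T)$, use Proposition \ref{Mb} to get $V\in\BB_f(\cK)$ together with $f(V)=W$, derive (iii) from co-invariance via the norm-convergent series for $g_i$ (available because $f\in\cM_{rad}$ forces $g$ to be free holomorphic on $[B(\cH)^n]_\gamma$ for some $\gamma>1$), and obtain the $*$-representation from Theorem \ref{rep1}.

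For uniqueness you take a genuinely different route, and there is a gap in the case $f\in\cM_{rad}^b$. You need $f(V')$ to be a row isometry for an \emph{arbitrary} minimal dilation $V'$, and you appeal to Theorem \ref{rep1}. But the forward direction of Theorem \ref{rep1} requires $f\in\cM^{||}$, i.e., \emph{norm} convergence of the series defining $f_i(M_Z)$, so that the $*$-representation $\pi$ (which is only norm-continuous, not SOT-continuous in general) can be pushed through to give $\pi(M_{f_i})=f_i(V')$. Under the hypothesis $f\in\cM_{rad}^b$ one only has $f\in\cM^b$, which gives SOT-convergence with uniformly bounded partial sums; this neither guarantees $M_{f_i}\in C^*(M_{Z_1},\ldots,M_{Z_n})$ nor that $\pi$ respects the $f$-functional calculus. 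This is exactly why Corollary \ref{conseq}(ii), which is the statement you are implicitly using, is stated only for $f\in\cM^{||}$. So your uniqueness argument is fine when $f\in\cM_{rad}\cap\cM^{||}$ but does not cover $f\in\cM_{rad}^b$ as written.

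The paper avoids this issue by never evaluating $f$ at $V'$: from condition (ii) one has $V_i^*V_j=\pi(M_{Z_i}^*M_{Z_j})=\langle Z_j,Z_i\rangle_{\HH^2(f)} I_\cK$, and combined with $V_i^*|_\cH=T_i^*$ this lets one compute $\langle V_\alpha h, V_\beta k\rangle$ for $h,k\in\cH$ purely in terms of $T$. The same computation for $V'$ shows that $U(\sum_\alpha V_\alpha h_\alpha):=\sum_\alpha V'_\alpha h_\alpha$ is a well-defined unitary intertwining $V$ and $V'$. This direct inner-product argument uses only the definition of minimal dilation and works under either hypothesis on $f$.
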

\begin{proof}
Since $(f_1(T ), \ldots, f_n(T ))$ is a row contraction, according
to \cite{Po-isometric}, there is a minimal isometric dilation
$W=(W_1,\ldots, W_n)\in B(\cK)^n$ with $\cK\supseteq \cH$.
Therefore, we have $W_i^* W_j=\delta_{ij} I_\cK$,
$W_i^*|_\cH=f_i(T)^*$ for $i=1,\ldots,n$, and
$\cK=\bigvee_{\alpha\in \FF_n^+} W_\alpha \cH$. Applying Proposition
\ref{Mb}, we deduce that $(g_1(W),\ldots, g_n(W))\in \BB_f(\cK)$ and
$f_i(g_1(W),\ldots, g_n(W))=W_i$ for $i=1,\ldots, n$. Since $f\in
\cM_{rad}$, for each $i=1,\ldots,n$, we have that
$g_i=\sum_{\alpha\in \FF_n^+} a_\alpha^{(i)} Z_\alpha$ is a free
holomorphic function on $[B(\cH)^n]_\gamma$ for  some $\gamma
>1$. Consequently, $g_i(W)=\sum_{k=0}^\infty \sum_{|\alpha|=k}
a_\alpha^{(i)} W_\alpha$ and $g_i(f(T))=\sum_{k=0}^\infty
\sum_{|\alpha|=k} a_\alpha^{(i)} [f(T)]_\alpha$  where the
convergence is in the operator norm. Hence, and using the fact that
$W_i^*|_\cH=f_i(T)^*$, we obtain
$$
g_i(W)^*|_\cH=g_i(f(T))^*|_\cH,\qquad i=1,\ldots,n.
$$
Now, using  relation $T_i=g_i(f(T))$,   we deduce that
$g_i(W)^*|_\cH=T_i^*|_\cH$ for $i=1,\ldots,n$. Note also that
$\bigvee_{\alpha\in \FF_n^+} [g(W)]_\alpha \cH\subseteq
\bigvee_{\alpha\in \FF_n^+} W_\alpha \cH=\cK$. To prove the reverse
inclusion, one can use the relation $f_i(g_1(W),\ldots, g_n(W))=W_i$
for $i=1,\ldots, n$, and the fact that $(g_1(W),\ldots, g_n(W))$ is
either in the convergence set $\cC_f^{SOT}(\cK)$ or
$\cC_f^{rad}(\cK)$. The fact that there  is a $*$-representation
$\pi:C^*(M_{Z_1},\ldots, M_{Z_n})\to B(\cK)$ such that
$\pi(M_{Z_i})=g_i(W)$ for any $i=1,\ldots,n$, follows from Theorem
\ref{rep1}.

 To prove the uniqueness,  let $V=(V_1,\ldots, V_n)$ and $V'=(V_1',\ldots, V_n')$ be
 two minimal dilations of $T=(T_1,\ldots, T_n)\in \BB_f(\cH)$ on the Hilbert space
  $\cK\supseteq \cH$ and $\cK'\supseteq \cH$, respectively.
 Let $\alpha:=g_{i_1}\cdots g_{i_k}\in \FF_n^+$ and $\beta:=g_{j_1}\cdots g_{j_p}\in \FF_n^+$.
  Note that  $V_i^*V_j=\pi(M_{Z_i}^*)\pi(M_{Z_j})=\left<Z_j,
  Z_i\right>_{\HH^2(f)}I_\cK$ for
  $i,j=1,\ldots,n$. Consequently, if $k>p$ and $h^{(\beta)}, k^{(\alpha)}\in \cH$, then

 \begin{equation*}
 \begin{split}
 \left<V_\alpha^* V_\beta h^{(\beta)}, k^{(\alpha)}\right>
 &=\left< \left<Z_{j_1}, Z_{i_1}\right>\cdots \left<Z_{j_p}, Z_{i_p}\right>
 V_{i_k}^*\cdots V_{i_{p+1}}^*h^{(\beta)}, k^{(\alpha)}\right>\\
 &=\left<Z_{j_1}, Z_{i_1}\right>\cdots \left<Z_{j_p}, Z_{i_p}\right>
 \left<T_{i_k}^*\cdots T_{i_{p+1}}^*h^{(\beta)}, k^{(\alpha)}\right>.
 \end{split}
 \end{equation*}
When $p>k$, we obtain
 $$
 \left<V_\alpha^* V_\beta h^{(\beta)}, k^{(\alpha)}\right>
 =
 \left<Z_{j_1}, Z_{i_1}\right>\cdots \left<Z_{j_k}, Z_{i_k}\right>
 \left<h^{(\beta)}, T_{i_{k+1}}\cdots T_{i_{p}}k^{(\alpha)}\right>,
 $$
 and, if $k=p$,
we have
  $
 \left<V_\alpha^* V_\beta h^{(\beta)}, k^{(\alpha)}\right>
 =
 \left<Z_{j_1}, Z_{i_1}\right>\cdots \left<Z_{j_k}, Z_{i_k}\right>
 \left<h^{(\beta)}, k^{(\alpha)}\right>.
 $
 Similar relations hold for the minimal dilation  $V'=(V_1',\ldots, V_n')$.
 Hence, and taking into account that the dilations are minimal, i.e.,
 $\cK=\bigvee_{\alpha\in \FF_n^+} V_\alpha \cH$ and $\cK'=\bigvee_{\alpha\in \FF_n^+} V_\alpha' \cH$, one can easily see that there is a unitary operator
 $U:\cK\to \cK'$ such that
 $U\left(\sum_{|\alpha|\leq m}V_\alpha h^{(\alpha)}\right)=
 \sum_{|\alpha|\leq m}V_\alpha' h^{(\alpha)}
 $
 for any $h^{(\alpha)}\in \cH$, $|\alpha|\leq m$, and $m\in \NN$. Consequently, we  deduce that $UV_i=V_i'U$ for any $i=1,\ldots,n$.
 The proof is complete.
\end{proof}

Using Theorem \ref{dilation} and  Theorem \ref{rep1}, we deduce the
following result.

\begin{corollary}\label{conseq}  Let $f=(f_1,\ldots, f_n)$ be an  $n$-tuple of formal power
series  with the model property and let $T=(T_1,\ldots,T_n)\in
\BB_f(\cH)$.
\begin{enumerate}
 \item[(i)]
  If $f\in \cM_{rad}^b$   or $ f\in \cM_{rad}\cap
\cM^{||}$,   then $(g_1(W),\ldots, g_n(W))$ is a dilation of
  $T=(T_1,\ldots, T_n)$, where   $W=(W_1,\ldots,
  W_n)$ is an isometric dilation of  the row contraction $(f_1(T),\ldots,
  f_n(T))$.
\item[(ii)]
If   $ f\in  \cM^{||}$
  and
  $V=(V_1,\ldots, V_n)\in \BB_f(\cK)$ is a dilation of $T$, then
  $(f_1(V),\ldots, f_n(V))$ is  an isometric dilation of $(f_1(T),\ldots,
  f_n(T))$.

  \end{enumerate}
\end{corollary}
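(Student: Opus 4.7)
The plan is to derive both parts from Theorem~\ref{dilation}, Theorem~\ref{rep1}, and Proposition~\ref{Mb}, by running essentially the same computation that appears in the proof of Theorem~\ref{dilation}, but now for a general (not necessarily minimal) isometric dilation.

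For part~(i) I would start from an isometric dilation $W=(W_1,\ldots,W_n)\in B(\cK)^n$ of the row contraction $f(T)=(f_1(T),\ldots,f_n(T))$, so $[W_1,\ldots,W_n]$ is a row isometry on $\cK\supseteq\cH$ with $W_i^*|_\cH=f_i(T)^*$. Proposition~\ref{Mb} then gives $g(W)\in\BB_f(\cK)$, and Theorem~\ref{rep1} produces a $*$-representation $\pi:C^*(M_{Z_1},\ldots,M_{Z_n})\to B(\cK)$ with $\pi(M_{Z_i})=g_i(W)$. To verify the intertwining $g_i(W)^*|_\cH=T_i^*$, I would use that $f\in\cM_{rad}$ forces $g$ to be a free holomorphic function on some $[B(\cH)^n]_\gamma$ with $\gamma>1$, so that both series
\[
g_i(W)=\sum_{\alpha\in\FF_n^+}a_\alpha^{(i)}W_\alpha,\qquad g_i(f(T))=\sum_{\alpha\in\FF_n^+}a_\alpha^{(i)}[f(T)]_\alpha
\]
converge in the operator norm. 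A one-line induction on $|\alpha|$ (identical to the one in the proof of Theorem~\ref{dilation}) gives $W_\alpha^*|_\cH=[f(T)]_\alpha^*|_\cH$ for every $\alpha\in\FF_n^+$, and summing through yields $g_i(W)^*|_\cH=g_i(f(T))^*|_\cH=T_i^*$, where the last equality is the identity $T=g(f(T))$ built into $T\in\BB_f(\cH)$.

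For part~(ii) I would take the given dilation $V=(V_1,\ldots,V_n)\in\BB_f(\cK)$, with associated $*$-representation $\pi$ satisfying $\pi(M_{Z_i})=V_i$ and $V_i^*|_\cH=T_i^*$. Since $f\in\cM^{||}$, Theorem~\ref{rep1} immediately gives that $[f_1(V),\ldots,f_n(V)]$ is a row isometry, so only the intertwining $f_i(V)^*|_\cH=f_i(T)^*$ remains. The hypothesis $f\in\cM^{||}$ means $p_m(M_Z):=\sum_{|\alpha|\leq m}c_\alpha^{(i)}M_{Z_\alpha}\to f_i(M_Z)=M_{f_i}$ in operator norm (or, in the radial version, via the limit $r\to 1$); applying the norm-contractive $*$-representation $\pi$ transfers this to $p_m(V)\to f_i(V)$ in norm. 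For any $h\in\cH$, the same inductive identity $V_\alpha^*|_\cH=T_\alpha^*|_\cH$ gives $p_m(V)^*h=p_m(T)^*h$; the left side converges to $f_i(V)^*h$ in norm, while the right side converges weakly to $f_i(T)^*h$ using the SOT convergence $p_m(T)\to f_i(T)$ supplied by $T\in\BB_f(\cH)$. Equating the two limits forces $f_i(V)^*h=f_i(T)^*h$.

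The main obstacle is bookkeeping: both property $(\cS)$/$(\cF)$ and the class $\cM^{||}$ allow two convergence modes (SOT vs.\ radial-SOT, and operator norm vs.\ radial norm), so the argument must be written to handle the plain and the radial case simultaneously. Apart from that, there is no genuine difficulty: the key bridge $V_\alpha^*|_\cH=T_\alpha^*|_\cH$ combined with the functional-calculus properties packaged into $\cM^{||}$ and $\cM_{rad}^b$ lets one transport identities from the universal model through $\pi$ and compare them with the evaluations at $T$.
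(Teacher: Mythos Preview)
Your proposal is correct and follows essentially the same route the paper takes: part~(i) reproduces the relevant portion of the proof of Theorem~\ref{dilation} (dropping minimality), and part~(ii) uses Theorem~\ref{rep1} together with the polynomial intertwining $p(V)^*|_\cH=p(T)^*$ exactly as in the proof of Theorem~\ref{CLT}. The bookkeeping you flag about the SOT versus radial convergence modes is the only place requiring care, and the paper handles it in the same way (implicitly, via the operator-norm convergence at $M_Z$ transported through $\pi$).
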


We remark that under the conditions and notations  of Theorem
\ref{funct-model2} (see also the  proof), if  $f\in \cM_{rad}^b$,
$~\Theta~$ is purely contractive, and
\begin{equation*}
\overline{\Delta_\Theta(\HH^2(f)\otimes
\cE_*)}=\overline{\Delta_\Theta(\vee_{i=1}^n M_{f_i}(\HH^2(f)\otimes
\cE_*))},
\end{equation*}
then,   considering $~\bf H~$ as a subspace of
$$
{\bf K}:= (\HH^2(f)\otimes {\cE})\oplus \overline{\Delta_\Theta
(\HH^2(f)\otimes \cE_*))},
$$
one can prove that the sequence of operators ${\bf V}:=({\bf
V}_1,\ldots, {\bf V}_n)$
 defined on
$~\bf K~$ by
$$
{\bf V}_i:=(M_{Z_i}\otimes I_{\cE})\oplus D_i,\qquad i=1,\ldots,n,
$$
is the minimal   dilation of ${\bf T}:=({\bf T}_1,\ldots, {\bf
T}_n)\in \BB_f^{cnc}({\bf H})$.
Indeed, according to Theorem 4.1 from \cite{Po-charact},  the
multi-analytic operator $\Psi$ coincides with the characteristic
function $\Theta_{\bf A}$ of the row contraction ${\bf A}=[{\bf
A}_1,\ldots, {\bf A}_n]$. Moreover, the $n$-tuple ${\bf W}:=[{\bf
W}_1,\ldots, {\bf W}_n]$  defined on $\widetilde \cK$ is the minimal
isometric dilation of ${\bf A}$. Consequently,
  ${\bf W}:=[{\bf W}_1,\ldots, {\bf W}_n]$  is a row isometry,  ${\bf
W}_i^*|_{\widetilde \cH}={\bf A}_i^*$ for  $i=1,\ldots, n$, and
$\widetilde \cK=\bigvee_{\alpha\in \FF_n^+} {\bf W}_\alpha
\widetilde \cH$. On the other hand, since $f=(f_1,\ldots, f_n)$ has
the radial approximation property, we have $g_i({\bf W}_1,\ldots,
{\bf W}_n)=g_i(S_1,\ldots, S_n)\oplus g_i(C_1,\ldots, C_n)$, where
the convergence defining these operators is in the operator norm
topology. Now, it easy to see that $g_i({\bf W}_1,\ldots, {\bf
W}_n)^*|_{\widetilde \cH}=g_i({\bf A}_1,\ldots, {\bf
A}_n)^*=\TT_i^*$ and $\widetilde \cK=\bigvee_{\alpha\in \FF_n^+}
g_\alpha({\bf W}_1,\ldots, {\bf W}_n) \widetilde \cH$. Note also
that ${\bf V}_i=\Gamma^{-1} g_i({\bf W}_1,\ldots, {\bf W}_n)
\Gamma$, $i=1,\ldots,n$. Taking into account that $\Gamma=(U\otimes
\cE)\oplus (U\otimes I_{\cE_*})$ is a unitary operator with the
property that $\Gamma({\bf H})=\widetilde \cH$, we deduce that ${\bf
V_i}^*|_{\bf H}={\bf T}_i$, $i=1,\ldots, n$ and ${\bf
K}=\bigvee_{\alpha\in \FF_n^+} {\bf V}_\alpha {\bf H}$. Using
Theorem \ref{dilation}, we deduce that $({\bf V}_1,\ldots, {\bf
V}_n)$ is the minimal dilation of $({\bf T}_1,\ldots, {\bf T}_n)$,
which proves our assertion.

 In what follows, we provide a commutant lifting theorem  for the noncommutative domains $\BB_f$.

\begin{theorem}\label{CLT} Let $f=(f_1,\ldots, f_n)$ be    an $n$-tuple of formal power series
in the set $\cM_{rad}\cap \cM^{||}$. Let $T=(T_1,\ldots, T_n)\in
\BB_f(\cH)$ and  $T'=(T_1',\ldots, T_n')\in \BB_f(\cH')$, and let
$V=(V_1,\ldots, V_n)\in \BB_f(\cK)$ and  $V'=(V_1',\ldots, V_n')\in
\BB_f(\cK')$ be  dilations  of $T$ and $T'$, respectively. If
$X:\cH\to \cH'$  is bounded operator  satisfying  the intertwining
relations $XT_i=T_i'X$ for any  $i=1,\ldots n$, then there exists a
bounded operator $Y:\cK\to \cK'$ with the following properties:
\begin{enumerate}
\item[(i)] $YV_i=V_i'Y$ for any  $i=1,\ldots n$;
\item[(ii)] $Y^*|_\cH=X^*$ and $\|X\|=\|Y\|$.
\end{enumerate}

\end{theorem}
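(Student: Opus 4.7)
The plan is to reduce the statement to Popescu's commutant lifting theorem \cite{Po-interpo} for row contractions on the full Fock space, by transferring between the noncommutative domain $\BB_f$ and the unit ball $[B(\cH)^n]_1^-$ along the pair of mutually inverse power series $(f,g)$. First I would pass the intertwining from $T,T'$ to $f(T),f(T')$: since $f\in\cM^{||}$, each $f_i(T)$ equals either $\text{SOT-}\lim_m\sum_{|\alpha|\le m} c_\alpha^{(i)} T_\alpha$ or its radial analogue, with uniformly bounded partial sums. Combining $XT_\alpha=T_\alpha'X$ for every word $\alpha\in\FF_n^+$ with the SOT-continuity of left and right multiplication by the bounded operator $X$ yields $Xf_i(T)=f_i(T')X$ for $i=1,\ldots,n$. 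Then Corollary \ref{conseq}(ii), whose hypothesis $f\in\cM^{||}$ is met, promotes $V$ and $V'$ from dilations of $T,T'$ to isometric dilations of the row contractions $f(T)$ and $f(T')$ on $\cK$ and $\cK'$, respectively.

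At this point I would invoke Popescu's commutant lifting theorem for row contractions (in the version permitting arbitrary, not merely minimal, isometric dilations) to produce a bounded operator $Y:\cK\to\cK'$ satisfying
\begin{equation*}
Yf_i(V)=f_i(V')Y,\quad i=1,\ldots,n,\qquad Y^*|_\cH=X^*,\qquad \|Y\|=\|X\|.
\end{equation*}
The remaining task is to convert the intertwining with $f(V)$ back to intertwining with $V$. Since $f\in\cM_{rad}$, the compositional inverse $g=(g_1,\ldots,g_n)$ is free holomorphic on $[B(\cH)^n]_\gamma$ for some $\gamma>1$, so $g_j(W_1,\ldots,W_n)=\sum_{k\ge 0}\sum_{|\alpha|=k} a_\alpha^{(j)} W_\alpha$ converges in operator norm on every row contraction. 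Applied to the row isometries $f(V),f(V')$ and combined with Proposition \ref{Mb} (whose hypothesis is exactly our standing assumption), this gives $g_j(f(V))=V_j$ and $g_j(f(V'))=V_j'$. Bootstrapping the intertwining to noncommutative monomials gives $Y[f(V)]_\alpha=[f(V')]_\alpha Y$ for every $\alpha\in\FF_n^+$, and then passing through the operator-norm convergent series defining $g_j$ delivers $YV_j=V_j'Y$ for $j=1,\ldots,n$.

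The principal obstacle is justifying that the intertwining survives the infinite summations defining $f_i(T)$ and $g_j(f(V))$. The two hypotheses on $f$ are calibrated precisely for these passages: $\cM^{||}$ provides uniform operator bounds on the partial sums of the $f_i$, so that the SOT (or radial SOT) limits commute with multiplication by a fixed bounded operator on either side, while $\cM_{rad}$ yields a Taylor series for $g$ that converges in operator norm on row contractions, making the backward conversion to $V,V'$ clean. The other properties recorded in the conclusion — the norm equality and the coisometric restriction $Y^*|_\cH=X^*$ — are inherited directly from Popescu's commutant lifting theorem applied at the row-contraction level.
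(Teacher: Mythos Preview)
Your proposal is correct and follows essentially the same route as the paper: transfer the intertwining to the row contractions $f(T),f(T')$, invoke Corollary~\ref{conseq}(ii) to identify $f(V),f(V')$ as isometric dilations, apply the noncommutative commutant lifting theorem for row contractions, and convert back via $g$ using that $g(f(V))=V$. One small remark: the passage $XT_i=T_i'X\Rightarrow Xf_i(T)=f_i(T')X$ does not use $\cM^{||}$ (which concerns norm convergence at the universal model $M_Z$) but rather the membership $T,T'\in\BB_f$, which already guarantees the relevant SOT (or radial SOT) convergence of $f_i$ at $T$ and $T'$; the paper is equally brief at this step.
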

\begin{proof} According to Corollary \ref{conseq}, since $f\in \cM_{rad}\cap
\cM^{||}$, there are isometric dilations of any $n$-tuple
$T=(T_1,\ldots, T_n)\in \BB_f(\cH)$. Let $V=(V_1,\ldots,V_n)\in
\BB_f(\cK)$ be a dilation of $T$. Then $V_i^*|_\cH=T_i^*$,
$i=1,\ldots,n$, and there is a $*$-representation
$\pi:C^*(M_{Z_1},\ldots, M_{Z_n})\to B(\cK)$ such that
$\pi(M_{Z_i})=V_i$, $i=1,\ldots,n$. Let $f_i$ have the
representation  $f_i=\sum_{\alpha\in \FF_n^+} c_\alpha^{(i)}
Z_\alpha$. Taking into account that $f\in \cM^{||}$, we deduce that
$ f_i(M_{Z_1},\ldots, M_{Z_n})=\sum_{k=o}^\infty \sum_{|\alpha|=k}
c_\alpha^{(i)} M_{Z_\alpha}$ or $ f_i(M_{Z_1},\ldots,
M_{Z_n})=\lim_{r\to 1} \sum_{k=0}^\infty \sum_{|\alpha|=k}
r^{|\alpha|} c_\alpha^{(i)} M_{Z_\alpha}$, where the convergence is
in the operator norm topology. Consequently, we have
$$
f_i(V_1,\ldots, V_n)=f_i(\pi(M_{Z_1}),\ldots,
\pi(M_{Z_n}))=\pi(f_i(M_{Z_1},\ldots, M_{Z_n}),\qquad i=1,\ldots,n,
$$
and
\begin{equation*}
\begin{split}
f_i(V_1,\ldots, V_n)^*f_i(V_1,\ldots, V_n)&=\pi(f_i(M_{Z_1},\ldots,
M_{Z_n}^*f_i(M_{Z_1},\ldots, M_{Z_n}))\\
&=\pi(M_{f_i}^* M_{f_i})=\delta_{ij}I_\cK
\end{split}
\end{equation*}
for any $i,j=1,\ldots,n$. Since $V_i^*|_\cH=T_i^*$, we also have
$f_i(V_1,\ldots, V_n)^*|_\cH= f_i(T_1,\ldots,T_n)$ for any
$i=1,\ldots,n$. This shows that $(f_1(V),\ldots, f_n(V))$ is an
isometric dilation of  $(f_1(T),\ldots, f_n(T))$. A similar results
holds if $V'=(V_1',\ldots, V_n')\in \BB_f(\cK')$ is a dilation  of
$T'=(T_i',\ldots, T_n')\in \BB_f(\cH')$. Now, we assume that
$X:\cH\to \cH'$  is a bounded operator such that $XT_i=T_i'X$ for
$i=1,\ldots,n$. Hence, we deduce that $Xf_i(T)=f_i(T')X$,
$i=1,\ldots,n$. Applying the noncommutative commutant lifting
theorem for row contractions \cite{Po-isometric}, we find an
operator $Y:\cK\to \cK'$ such that $Yf_i(V)=f_i(V')Y$ for
$i=1,\ldots,n$, $Y^*|_{\cH'}=X^*$,  and $\|Y\|=\|X\|$. Consequently,
we have $Yg_i(f(V))=g_i(f(V'))Y$, $i=1,\ldots,n$. Since
$g_i(f(V))=V_i$ and $g_i(f(V'))=V'$, we conclude that  $YV_i=V_i'Y$
for any $i=1,\ldots,n$. The proof is complete.
\end{proof}

\bigskip

\section{Noncommutative varieties, constrained characteristic functions, and operator models}

We present operator models, in terms of constrained characteristic
functions, for
  $n$-tuples of operators in    noncommutative
varieties $\cV_{f,J}^{cnc}(\cH)$  associated with WOT-closed
two-sided ideals $J$ of the Hardy algebra $H^\infty(\BB_f)$. This is
used to show that the constrained characteristic function
$\Theta_{f,T,J}$ is a complete unitary invariant for
$\cV_{f,J}^{cnc}(\cH)$.

Let $f=(f_1,\ldots, f_n)$ be an $n$-tuple of formal power  series
with the model property. The noncommutative Hardy algebra
$H^\infty(\BB_f)$ is  the WOT-closure of all
  noncommutative polynomials in $M_{Z_1},\ldots, M_{Z_n}$ and the
  identity.
According to  \cite{Po-multi},    $J$  is a WOT-closed two-sided
ideal of $H^\infty(\BB_f)$ if and only if there is a WOT-closed
two-sided ideal $\cI$ of the noncommutative analytic Toeplitz
algebra $F_n^\infty$ such that
$$
J=\{\varphi(f(M_Z)): \  \varphi\in \cI\}.
$$
We mention that if $\varphi(S_1,\ldots,S_n)\in F_n^\infty$ has the
Fourier representation $\varphi(S_1,\ldots,S_n)=\sum_{\alpha\in
\FF_n^+} c_\alpha S_\alpha$, then $$\varphi(f(M_Z)):=\text{\rm
SOT-}\lim_{r\to 1}\sum_{k=0}^\infty \sum_{|\alpha|=k} c_\alpha
r^{|\alpha|} [f(M_Z)]_\alpha
$$
exists.
 Denote by $H^\infty(\cV_{f,J})$   the WOT-closed algebra
generated
 by the operators
 $B_i:=P_{\cN_{f,J}} M_{Z_i} |_{\cN_{f,J}}$,  for  $i=1,\ldots, n$, and the identity, where
 $$
 \cN_{f,J}:= \HH^2(f)\ominus \cM_{f,J}\quad \text{ and }\quad
 \cM_{f,J}:=\overline{ J \HH^2(f)}.
 $$
We recall that  the map
$$\Gamma:H^\infty(\BB_f)/J\to
B(\cN_{f,J}) \quad \text{ defined by } \quad
\Gamma(\varphi+J)=P_{\cN_{f,J}} \varphi|_{\cN_{f,J}}
$$
is a   completely isometric representation. Since the set of all
polynomials in $M_{Z_1},\ldots, M_{Z_n}$  and the identity is
WOT-dense in $H^\infty(\BB_f)$,  we can conclude that
$P_{\cN_{f,J}}H^\infty(\BB_f)|_{\cN_{f,J}}$ is  a WOT-closed
subalgebra of $B(\cN_{f,J})$ and, moreover,
$H^\infty(\cV_{f,J})=P_{\cN_{f,J}}H^\infty(\BB_f)|_{\cN_{f,J}}$.

Given an $n$-tuple  $T=(T_1,\ldots,T_n)\in \BB_f(\cH)$,
  we consider the defect operator
$$
\Delta_{f,T} :=\left(I-\sum_{j=1}^nf_j(T)f_j(T)^*\right)^{1/2}
$$
and the defect space $\cD_{f,T}:=\overline{\Delta_{f,T} \cH}$.
Define the noncommutative Poisson kernel $K_{f,T}:\cH\to
\HH^2(f)\otimes \cD_{f,T}$ by setting
\begin{equation}
\label{KfT} K_{f,T}h:=\sum_{\alpha\in \FF_n^+} f_\alpha\otimes
\Delta_{f,T}  [f (T)]_\alpha^*h,\qquad h\in \cH.
\end{equation}
Let $J\neq H^\infty(\BB_f)$ be a  WOT-closed two-sided ideal of
$H^\infty(\BB_f)$. The {\it constrained Poisson kernel} associated
with $f$, $T$, and $J$ is the operator $K_{f, T,J}:\cH\to
\cN_{f,J}\otimes \cD_{f,T} $ defined by
$$
K_{f, T,J}:=(P_{\cN_{f,J}}\otimes  I_{ \cD_{f,T}}) K_{f,T}.
$$
We   remark that if $\psi=\varphi(f(M_Z))$ for some
$\varphi(S_1,\ldots,S_n)=\sum_{\alpha\in \FF_n^+} c_\alpha S_\alpha$
in the noncommutative analytic Toeplitz algebra $F_n^\infty$, and
$T:=(T_1,\ldots, T_n)$ is a c.n.c. $n$-tuple in $\BB_f(\cH)$, then
$f(T)=(f_1(T),\ldots, f_n(T))$ is a c.n.c. row contraction and, due
to the $F_n^\infty$-functional calculus  \cite{Po-funct}, the limit
$$
\psi(T_1,\ldots, T_n):= \text{\rm SOT-}\lim_{r\to
1}\sum_{k=0}^\infty \sum_{|\alpha|=k} c_\alpha r^{|\alpha|}
[f(T)]_\alpha
$$
exists. Therefore, we can talk about an $H^\infty(\BB_f)$-functional
calculus for the $n$-tuples of operators in $\BB_f^{cnc}(\cH)$. We
introduce the noncommutative variety $\cV_{f,J}^{cnc}(\cH)\subset
\BB_f(\cH)$ defined by
$$
\cV_{f,J}^{cnc}(\cH):=\left\{ (T_1,\ldots, T_n)\in
\BB_f^{cnc}(\cH):\ \psi(T_1,\ldots,T_n)=0 \ \text{ for any } \
\psi\in J\right\}.
$$
Note that $\psi(B_1,\ldots, B_n)=0$ for any $\psi\in J$. The $n$-tuple
 $B=(B_1,\ldots, B_n)\in \cV_{f,J}^{cnc}(\cN_{f,J})$ will play the role of universal
  model for the noncommutative variety $\cV_{f,J}^{cnc}$.

\begin{proposition}\label{KJT}  Let $f=(f_1,\ldots, f_n)$ be an  $n$-tuple of formal power
series  with the model property and  let $J\neq H^\infty(\BB_f)$ be
a WOT-closed two-sided  ideal of $H^\infty(\BB_f)$. If
$T:=(T_1,\ldots, T_n)$ is an $n$-tuple in $\cV_{f,J}^{cnc}(\cH)$,
then
$$
K_{f,T,J} T_i^*=(B_i^*\otimes I_{ \cD_{f,T}})K_{f, T,J}, \qquad
i=1,\ldots, n,
$$
 and
\begin{equation*}
K_{f,T,J}^* K_{f,T,J}=I_\cH-\text{\rm SOT-}\lim_{q\to \infty}
\sum_{\alpha\in \FF_n,\, |\alpha|=q} [f (T)]_\alpha [f
(T)]_\alpha^*,
\end{equation*}
where $K_{f,T,J}$ is the constrained Poisson kernel associated with
$f$,  $T$, and $J$.
\end{proposition}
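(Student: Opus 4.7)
The plan is to reduce both assertions to the corresponding identities for the unconstrained Poisson kernel $K_{f,T}$ by proving that $\ran K_{f,T}\subseteq \cN_{f,J}\otimes \cD_{f,T}$, so that $K_{f,T,J}=K_{f,T}$ as operators with values in $\cN_{f,J}\otimes \cD_{f,T}$.

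From \cite{Po-multi} I would use the following facts valid for any $T\in \BB_f^{cnc}(\cH)$: (a) the intertwining relations $K_{f,T}T_i^*=(M_{Z_i}^*\otimes I_{\cD_{f,T}})K_{f,T}$ for $i=1,\dots,n$; (b) the dual Poisson-transform identity $\psi(T)K_{f,T}^*=K_{f,T}^*(\psi\otimes I_{\cD_{f,T}})$ for every $\psi\in H^\infty(\BB_f)$, obtained by writing $\psi=\varphi(f(M_Z))$ with $\varphi\in F_n^\infty$, identifying $K_{f,T}$ with the noncommutative Poisson kernel of the c.n.c. row contraction $f(T)$ via the unitary $U\colon \HH^2(f)\to F^2(H_n)$, $Uf_\alpha=e_\alpha$, and invoking the $F_n^\infty$-functional calculus of \cite{Po-funct}; and (c) the telescoping identity $K_{f,T}^*K_{f,T}=I_\cH-\text{\rm SOT-}\lim_{q\to\infty}\Phi_{f,T}^q(I)$, where $\Phi_{f,T}(Y)=\sum_{i=1}^n f_i(T)Yf_i(T)^*$, proved by a direct telescoping computation.

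For the range containment, the hypothesis $T\in \cV_{f,J}^{cnc}(\cH)$ gives $\psi(T)=0$ for every $\psi\in J$, so (b) collapses to $K_{f,T}^*(\psi\otimes I_{\cD_{f,T}})=0$. Hence $\ran(\psi\otimes I_{\cD_{f,T}})\subseteq \ker K_{f,T}^*=(\ran K_{f,T})^\perp$ for every $\psi\in J$; taking the closed span over such $\psi$ yields $\cM_{f,J}\otimes \cD_{f,T}\subseteq (\ran K_{f,T})^\perp$, so $\ran K_{f,T}\subseteq \cN_{f,J}\otimes \cD_{f,T}$ and therefore $K_{f,T,J}=K_{f,T}$.

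The two identities in the statement now follow. Since $J$ is a two-sided ideal of $H^\infty(\BB_f)$ and each $M_{Z_i}$ belongs to $H^\infty(\BB_f)$, the subspace $\cM_{f,J}=\overline{J\HH^2(f)}$ is $M_{Z_i}$-invariant, hence $\cN_{f,J}$ is $M_{Z_i}^*$-invariant and $B_i^*=M_{Z_i}^*|_{\cN_{f,J}}$. For $h\in\cH$, the vector $K_{f,T,J}h$ lies in $\cN_{f,J}\otimes \cD_{f,T}$, so $(B_i^*\otimes I_{\cD_{f,T}})K_{f,T,J}h=(M_{Z_i}^*\otimes I_{\cD_{f,T}})K_{f,T}h=K_{f,T}T_i^*h=K_{f,T,J}T_i^*h$ by (a), giving the intertwining. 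For the norm identity, $K_{f,T,J}^*K_{f,T,J}=K_{f,T}^*P_{\cN_{f,J}}K_{f,T}=K_{f,T}^*K_{f,T}$, and (c) produces the stated right-hand side. The main obstacle is step (b): one needs the $H^\infty(\BB_f)$-functional calculus on $\BB_f^{cnc}(\cH)$ together with its SOT-continuity properties to extend polynomial intertwiners to arbitrary $\psi\in J$, but these ingredients are available from \cite{Po-multi} and \cite{Po-funct}.
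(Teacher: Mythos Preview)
Your proof is correct and follows essentially the same route as the paper: establish the $H^\infty(\BB_f)$-intertwining $\psi(T)K_{f,T}^*=K_{f,T}^*(\psi\otimes I)$, use $\psi(T)=0$ for $\psi\in J$ to conclude $\ran K_{f,T}\subseteq \cN_{f,J}\otimes\cD_{f,T}$ (hence $K_{f,T,J}=K_{f,T}$), and then read off both identities from the unconstrained ones. The paper derives your item (b) in detail within the proof (polynomials $\to$ $\varphi_r$ $\to$ SOT-limit) rather than citing it, and computes (c) explicitly at the end, but the logical skeleton is identical; one cosmetic slip is that $P_{\cN_{f,J}}$ in your final display should be $P_{\cN_{f,J}}\otimes I_{\cD_{f,T}}$.
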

\begin{proof}
The fact  that

\begin{equation}
\label{Poisson-inter} K_{f,T}T_i^*=(M_{Z_i}^*\otimes
I_{\cD_{f,T}})K_{f,T}, \qquad i=1,\ldots,n,
\end{equation}
was proved in Theorem 4.1 from \cite{Po-multi}.
 Now,  we show that
$$
K_{f,T,J} T_i^*=(B_i^*\otimes I_{ \cD_{f,T}})K_{f, T,J}, \qquad
i=1,\ldots, n,
$$
where $K_{f,T,J}$ is the constrained Poisson kernel associated with
$f$,  $T$, and $J$.
 Note that, due to  relation \eqref{Poisson-inter},
  we have
 \begin{equation}\label{ker-pol}
 K_{f,T}^*(p(M_{Z_1},\ldots, M_{Z_n})\otimes I_{\cD_{f,T}}) =p(T_1,\ldots, T_n)K_{f,T}^*
 \end{equation}
 for any   polynomial  $p$ in $M_{Z_1},\ldots, M_{Z_n}$.
 If $\varphi(S_1,\ldots, S_n):= \sum\limits_{k=0}^\infty
 \sum\limits_{|\alpha|=k} a_\alpha S_\alpha$
 is
 in $F_n^\infty$, then, for any $0<r<1$,
 $\varphi_r(S_1,\ldots, S_n):= \sum\limits_{k=0}^\infty \sum\limits_{|\alpha|=k}
 r^{|\alpha|}a_\alpha S_\alpha$  is in the noncommutative disc algebra $\cA_n$.
 Consequently,
  $$
 \lim_{m\to \infty} \sum_{k=0}^m \sum_{|\alpha|=k}
  r^{|\alpha|} a_\alpha [f(M_Z)]_\alpha = \varphi_r(f(M_Z))
 $$
  in the norm topology and,  using relation
   \eqref{ker-pol} we obtain
  \begin{equation*}
 K_{f,T}^*[ \varphi_r(f(M_Z))\otimes I_{  \cD_{f,T}}] =\varphi_r(f(T))K_{f,T}^*
 \end{equation*}
for any $\varphi(S_1,\ldots, S_n)\in F_n^\infty$ and $0<r<1$. Since
$T\in \BB_f^{cnc}(\cH)$   and $M_Z:=(M_{Z_1},\ldots, M_{Z_n})\in
\BB^{pure}(\HH^2(f))$, we can
 use the $F_n^\infty$-functional calculus for row contractions.
  We recall that the map $A\mapsto A\otimes I$ is SOT-continuous on bounded sets of $B(F^2(H_n))$ and,
   due to the noncommutative von Neumann inequality \cite{Po-von},
    we have $\|\varphi_r(f(M_Z))\|\leq \|\varphi(S_1,\ldots, S_n)\|$. Therefore,  we can
 take  $r\to 1$ in
 the equality  above and
  obtain
 \begin{equation*}
K_{f,T}^*( \varphi(f(M_Z))\otimes I_{  \cD_{f,T}})
=\varphi(f(T))K_{f,T}^*
 \end{equation*}
 for any $\varphi(S_1,\ldots, S_n)\in F_n^\infty$. Consequently, we
 have
\begin{equation}
\label{kft} K_{f,T}^*( \psi\otimes I_{  \cD_{f,T}}) =
\psi(T)K_{f,T}^*, \qquad \psi \in H^\infty(\BB_f),
\end{equation}
which implies
\begin{equation*}
\left<(\psi ^*\otimes I_{  \cD_{f,T}})K_{f,T}h,1\otimes k\right>=
 \left<K_{f,T}\psi(T )^*h,1\otimes k\right>
 \end{equation*}
 for any $\psi\in H^\infty(\BB_f)$,\  $h\in \cH$, and $k\in  \cD_{f,T} $.
 Hence, and using the fact that
 $\psi(T)=0$ for $\psi\in J$, we obtain
 $\left<K_{f,T}h, \psi(1)\otimes k\right>=0$ for any $h\in \cH$ and $k\in  \cD_{f,T} $.
 Taking into account the definition of $\cM_{f,J}$, we deduce that
 $K_{f,T}(\cH)\subseteq \cN_{f,J}\otimes \cD_{f,T}$. This shows that the constrained Poisson
 kernel $K_{f,T,J}$
  satisfies the relation
 \begin{equation}
 \label{KTJ}
 K_{f,T,J}h=\left(P_{ \cN_{f,J}}\otimes I_{\cD_{f,T}}\right) K_{f,T}h=K_{f,T} h, \qquad h\in \cH.
 \end{equation}
 Since $J$ is a left ideal of $H^\infty(\BB_f)$, $\cN_{f,J}$ is an invariant subspace
 under each operator $M_{Z_1}^*,\ldots, M_{Z_n}^*$ and therefore
  $B_\alpha=P_{ \cN_{f,J}}M_{Z_\alpha}|\cN_{f,J}$ for
 $\alpha\in \FF_n^+$.
 Since $(B_1,\ldots, B_n)\in \BB^{pure}(\cN_{f,J})$, we can use
the $H^\infty(\BB_f)$-functional calculus to deduce that

 \begin{equation}
 \label{fnn}
 \chi(B_1,\ldots, B_n)=P_{ \cN_{f,J}}\chi(M_{Z_1},\ldots,
 M_{Z_n})|_{\cN_{f,J}}
 \end{equation}
 for any $\chi\in H^\infty(\BB_f)$.
Taking into account relations \eqref{kft},
 \eqref{KTJ}, and \eqref{fnn},
we obtain
\begin{equation*}
 \begin{split}
K_{f,T,J}\chi(T_1,\ldots, T_n)^*&=\left(P_{\cN_{f,J}}\otimes
I_{\cD_{f,T}}\right) [\chi(M_{Z_1},\ldots, M_{Z_n})^*\otimes
I_{\cD_{f,T}}]\left(P_{\cN_{f,J}}\otimes
 I_{\cD_{f,T}}\right)K_{f,T}\\
&=
 \left[\left(P_{ \cN_{f,J}}\chi(M_{Z_1},\ldots, M_{Z_n})|
 \cN_{f,J}\right)^*\otimes I_{\cD_{f,T}}\right] K_{f,T, J}\\
&= \left[\chi(B_1,\ldots, B_n)^*\otimes
I_{\cD_{f,T}}\right]K_{f,T,J}.
 \end{split}
 \end{equation*}
Therefore, we have
\begin{equation*}
 K_{f,T,J}\chi(T_1,\ldots, T_n)^*=\left[\chi(B_1,\ldots, B_n)^*\otimes
 I_{\cD_{f,T}}\right]K_{f,T}
 \end{equation*}
 for any $\chi(B_1,\ldots, B_n)\in H^\infty(\cV_{f,J})$. In
 particular, we have $
K_{f,T,J} T_i^*=(B_i^*\otimes I_{ \cD_{f,T}})K_{f, T,J}$ for
$i=1,\ldots,n$, which proves the first part of the proposition.

Due to relation \eqref{KTJ} and the definition of the Poisson kernel, we have
\begin{equation*}
\begin{split}
\left<K_{f,T,J}^*K_{f,T,J}h,h\right>&=\|K_{f,T}h\|^2=\lim_{q\to\infty}
\left\|\sum_{\alpha\in \FF_n^+, |\alpha|\leq q} f_\alpha\otimes
\Delta_{f,T} [f (T)]_\alpha^*h\right\|^2_{\HH^2(f)\otimes \cH}\\
&=\lim_{q\to\infty}\sum_{\alpha\in \FF_n^+, |\alpha|\leq q} \left<[f (T)]_\alpha\Delta_{f,T}^2 [f (T)]_\alpha^*h, h\right>\\
&=\|h\|-\lim_{q\to\infty}\left<\left(\sum_{\alpha\in \FF_n^+,
 |\alpha|=q} [f (T)]_\alpha [f (T)]_\alpha^*\right)h, h\right>
\end{split}
\end{equation*}
for any $h\in \cH$.
Since $[f_1(T),\ldots, f_n(T)]$ is a row contraction, the latter limit exists.
The proof is complete.
\end{proof}

\begin{corollary}\label{vN-variety} If $T=(T_1,\ldots, T_n)$ is  a pure $n$-tuple of operators in
$\cV_{f,J}^{cnc}(\cH)$, then
$$
 T_\alpha T_\beta^*=K_{f,T,J}^*[ B_\alpha B_\beta^* )\otimes
 I]K_{f,T,J},\qquad \alpha,\beta\in \FF_n^+,
$$
and
$$
 \left\|\sum_{i=1}^m q_i(T_1,\ldots, T_n)q_i(T_1,\ldots,
T_n)^*\right\| \leq \left\|\sum_{i=1}^m q_i(B_1,\ldots,
B_n)q_i(B_1,\ldots, B_n)\right\|
$$
for any     $q_i\in \CC[Z_1,\ldots,Z_n]$ and $m\in\NN$. Moreover,
$$
\|\chi(T_1,\ldots, T_n)\|\leq \|\chi(B_1,\ldots, B_n)\|
$$
for any $\chi\in H^\infty(\BB_f)$.
\end{corollary}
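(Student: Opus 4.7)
The plan is to deduce the corollary directly from Proposition \ref{KJT} by upgrading the intertwining identity $K_{f,T,J} T_i^* = (B_i^* \otimes I_{\cD_{f,T}}) K_{f,T,J}$ to an isometric dilation formula. The key observation is that purity of $T$ promotes the constrained Poisson kernel $K_{f,T,J}$ to an isometry: since $T$ is pure in $\BB_f(\cH)$, we have $\text{SOT-}\lim_{q\to\infty}\sum_{|\alpha|=q}[f(T)]_\alpha[f(T)]_\alpha^* = 0$, so the second formula of Proposition \ref{KJT} gives $K_{f,T,J}^*K_{f,T,J} = I_\cH$.

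Next, I would iterate the intertwining relation to obtain $K_{f,T,J} T_\beta^* = (B_\beta^* \otimes I_{\cD_{f,T}}) K_{f,T,J}$ for every $\beta \in \FF_n^+$, and then take adjoints to get $T_\beta K_{f,T,J}^* = K_{f,T,J}^*(B_\beta \otimes I_{\cD_{f,T}})$. Inserting the identity $K_{f,T,J}^* K_{f,T,J} = I_\cH$ between $T_\alpha$ and $T_\beta^*$ yields
$$
T_\alpha T_\beta^* = T_\alpha K_{f,T,J}^* K_{f,T,J} T_\beta^* = K_{f,T,J}^*(B_\alpha \otimes I)(B_\beta^* \otimes I) K_{f,T,J} = K_{f,T,J}^*[B_\alpha B_\beta^* \otimes I_{\cD_{f,T}}] K_{f,T,J},
$$
which is the first asserted identity. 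By linearity, for any polynomials $q_1,\ldots,q_m$,
$$
\sum_{i=1}^m q_i(T) q_i(T)^* = K_{f,T,J}^*\!\left[\sum_{i=1}^m q_i(B) q_i(B)^* \otimes I_{\cD_{f,T}}\right]\! K_{f,T,J},
$$
and since $K_{f,T,J}$ is an isometry the norm of the left-hand side is dominated by $\bigl\|\sum_i q_i(B) q_i(B)^*\bigr\|$.

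For the final inequality with $\chi \in H^\infty(\BB_f)$, I would invoke the more general intertwining identity derived in the proof of Proposition \ref{KJT}, namely $K_{f,T,J}\,\chi(T)^* = [\chi(B)^* \otimes I_{\cD_{f,T}}] K_{f,T,J}$, established via the $F_n^\infty$-functional calculus together with \eqref{fnn} which identifies $\chi(B)$ with $P_{\cN_{f,J}}\chi(M_Z)|_{\cN_{f,J}}$. Taking adjoints and composing with $K_{f,T,J}$ on the right gives
$$
\chi(T) = \chi(T) K_{f,T,J}^* K_{f,T,J} = K_{f,T,J}^*[\chi(B) \otimes I_{\cD_{f,T}}] K_{f,T,J},
$$
so $\|\chi(T)\| \leq \|\chi(B) \otimes I_{\cD_{f,T}}\| = \|\chi(B)\|$.

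There is no genuine obstacle here: the substance has already been absorbed into Proposition \ref{KJT}. The only point requiring care is ensuring that the $H^\infty$-functional calculus used for $\chi(T)$ and $\chi(B)$ is the same one (the SOT-limit via the $F_n^\infty$-calculus applied to the c.n.c.\ row contractions $f(T)$ and $f(B)=f(M_Z)|_{\cN_{f,J}}$), so that the intertwining from Proposition \ref{KJT} genuinely applies to the given $\chi$. Once this is noted, the corollary is essentially a one-line consequence of the isometry property of $K_{f,T,J}$.
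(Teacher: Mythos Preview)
Your argument is correct and is exactly the intended derivation: the paper states this result as an immediate corollary of Proposition \ref{KJT} without giving a separate proof, and your write-up supplies precisely the routine steps (isometry of $K_{f,T,J}$ from purity, iteration of the intertwining relation, and the general $H^\infty(\BB_f)$-intertwining established inside the proof of Proposition \ref{KJT}).
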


Let $J$ be a WOT-closed two-sided ideal of the Hardy  algebra
$H^\infty(\BB_f)$.   We define the {\it constrained characteristic
function} associated with an   $n$-tuple $T:=(T_1,\ldots, T_n)\in
\cV_{f,J}^{cnc}(\cH)$
  to be the multi-analytic operator with
respect to $B_1,\ldots, B_n$,
$$
\Theta_{f,T,J}(W_1,\ldots, W_n):\cN_{f,J}\otimes \cD_{f,T^*}\to
\cN_{f,J}\otimes \cD_{f,T},
$$
defined by the formal Fourier representation
$$ -I_{\cN_{f,J}}\otimes f(T)+
\left(I_{\cN_{f,J}}\otimes
\Delta_{f,T}\right)\left(I_{{\cN_{f,J}}\otimes \cH}-
\sum_{i=1}^n W_i\otimes f_i(T)^*\right)^{-1}\\
\left[W_1\otimes I_\cH,\ldots, W_n\otimes I_\cH \right]
\left(I_{\cN_{f,J}}\otimes \Delta_{f,T^*}\right),
$$
where $W_i:=P_{\cN_{f,J}}\Lambda_i|_{\cN_{f,J}}$, $i=1,\ldots,n$,
and $\Lambda_1,\ldots,\Lambda_n$ are the right multiplication
operators by the power series $f_i$ on the Hardy space $\HH^2(f)$.

\begin{theorem}\label{model}  Let $f=(f_1,\ldots, f_n)$ be an  $n$-tuple of formal power
series  with the model property and  let $J\neq H^\infty(\BB_f)$ be
a WOT-closed two-sided  ideal of $H^\infty(\BB_f)$.  If
$T:=(T_1,\ldots, T_n)$ is  an $n$-tuple of operators in  the
noncommutative variety $\cV_{f,J}^{cnc}(\cH)$,
  then $T $ is unitarily equivalent to the
  $n$-tuple ${\bf T}:=({\bf T}_1,\ldots, {\bf T}_n)$ in
$\cV_{f,J}^{cnc}({\bf H})$  where:
\begin{enumerate}
\item[(i)]  the Hilbert space ${\bf H}$ is defined by
\begin{equation*}
{\bf H}:=\left[\left(\cN_{f,J}\otimes \cD_{f,T}\right)\oplus
\overline{\Delta_{\Theta_{f,T,J}}(\cN_{f,J}\otimes \cD_{f,T^*})}\right]
\ominus\left\{\Theta_{f,T, J}f\oplus \Delta_{\Theta_{f,T,J}}f:\ f\in
\cN_{f,J}\otimes \cD_{f,T^*}\right\},
\end{equation*}
where $\Delta_{\Theta_{f,T,J}}:= \left(I-\Theta_{f,T,J}^*
\Theta_{f,T,J}\right)^{1/2}$;
 \item[(ii)] each operator ${\bf T}_i$,
\ $i=1,\ldots, n$,  is uniquely defined by the relation
$$
\left( P_{\cN_{f,J}\otimes \cD_{f,T}}|_{{\bf H}}\right) {\bf
T}_i^*x= (B_i^*\otimes I_{\cD_{f,T}})\left( P_{\cN_{f,J}\otimes
\cD_{f,T}}|_{{\bf H}}\right)x, \qquad x\in {\bf H},
$$
where $P_{\cN_{f,J}\otimes \cD_{f,T}}|_{{\bf H}}$ is a one-to-one
operator, $ P_{\cN_{f,J}\otimes \cD_{f,T}}$ is the orthogonal
projection of the Hilbert space $\left(\cN_{f,J}\otimes
\cD_{f,T}\right)\oplus \overline{\Delta_{\Theta_{f,T,J}}(\cN_{f,J}\otimes
\cD_{f,T^*})}$ onto the subspace $\cN_{f,J}\otimes \cD_{f,T}$, and
$B_i:=P_{\cN_{f,J}} M_{Z_i} |_{\cN_{f,J}}$  for  any  $i=1,\ldots,
n$.
\end{enumerate}

Moreover,  $T$ is  in  $\cV_{f,J}^{pure}(\cH)$ if and only if the
constrained characteristic function  $\Theta_{f,T, J}$ is a partial
isometry.
  In this case,   $T$ is unitarly equivalent to the $n$-tuple
\begin{equation*}
\left(P_ {{\bf H}} (B_1\otimes I_{\cD_{f,T}})|_{\bf H},\ldots,
P_{{\bf H}} (B_n\otimes I_{\cD_{f,T}})|_{\bf H}\right),
\end{equation*}
where $P_{{\bf H}}$ is the orthogonal projection of
$\cN_{f,J}\otimes \cD_{f,T}$ onto the Hilbert space
$${\bf H}=\left(\cN_{f,J}\otimes \cD_{f,T}\right)\ominus
\Theta_{f,T, J}(\cN_{f,J}\otimes \cD_{f,T^*}).
$$
 \end{theorem}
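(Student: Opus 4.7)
The plan is to mimic the construction of Theorem~\ref{funct-model1} in the constrained setting, replacing $(M_{Z_1},\ldots,M_{Z_n})$ on $\HH^2(f)$ by the universal model $(B_1,\ldots,B_n)$ on $\cN_{f,J}$, and $\Theta_{f,T}$ by $\Theta_{f,T,J}$. The preparatory observation is that $\cN_{f,J}$ is $\Lambda_i^*$-invariant for every right multiplier $\Lambda_i$: since $J$ is two-sided, $\cM_{f,J}=\overline{J\HH^2(f)}$ is invariant under each $\Lambda_i$ (because right and left multipliers commute), so its orthogonal complement $\cN_{f,J}$ is invariant under $\Lambda_i^*$. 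Consequently, the compressions satisfy $W_\alpha = P_{\cN_{f,J}}\Lambda_\alpha|_{\cN_{f,J}}$ for every $\alpha\in\FF_n^+$, and the Fourier expansion of $\Theta_{f,T,J}$ is the formal compression of that of $\Theta_{f,T}$. In particular, because $\Theta_{f,T}$ is multi-analytic with respect to $M_{Z_1},\ldots,M_{Z_n}$ (hence commutes with all left multipliers), $\Theta_{f,T}$ maps $\cN_{f,J}\otimes\cD_{f,T^*}$ into $\cN_{f,J}\otimes\cD_{f,T}$ and this compression is nothing but a restriction.

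Next I would construct the candidate model operators. Set $\mathbf{K}:=(\cN_{f,J}\otimes\cD_{f,T})\oplus\overline{\Delta_{\Theta_{f,T,J}}(\cN_{f,J}\otimes\cD_{f,T^*})}$ and $\mathbf{G}:=\{\Theta_{f,T,J}x\oplus\Delta_{\Theta_{f,T,J}}x:x\in\cN_{f,J}\otimes\cD_{f,T^*}\}$, so that $\mathbf{H}=\mathbf{K}\ominus\mathbf{G}$. Introduce $D_i(\Delta_{\Theta_{f,T,J}}y):=\Delta_{\Theta_{f,T,J}}(B_i\otimes I_{\cD_{f,T^*}})y$. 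Since $\Theta_{f,T,J}$ is multi-analytic with respect to $B_1,\ldots,B_n$, the computation from Theorem~\ref{funct-model1} carries over verbatim: each $D_i$ extends boundedly to the closed defect subspace, and $\mathbf{G}$ is invariant under every $(B_i\otimes I_{\cD_{f,T^*}})\oplus D_i$, so $\mathbf{H}$ is coinvariant for their adjoints. Define $\mathbf{T}_i^*$ as the restriction of $(B_i^*\otimes I_{\cD_{f,T}})\oplus D_i^*$ to $\mathbf{H}$; the implicit characterization in the statement is obtained by reading off the first coordinate, and injectivity of $P_{\cN_{f,J}\otimes\cD_{f,T}}|_{\mathbf{H}}$ follows because any vector of the form $0\oplus\Delta_{\Theta_{f,T,J}}v$ lying in $\mathbf{H}$ must be orthogonal to every element of $\mathbf{G}$, forcing $\Delta_{\Theta_{f,T,J}}v=0$.

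The heart of the proof is the unitary equivalence $T\simeq\mathbf{T}$. I would assemble the intertwining unitary $U:\cH\to\mathbf{H}$ by pairing the constrained Poisson kernel with a defect component designed to annihilate $\mathbf{G}$; the intertwining on the first summand is precisely Proposition~\ref{KJT}, namely $K_{f,T,J}T_i^*=(B_i^*\otimes I_{\cD_{f,T}})K_{f,T,J}$. The isometry of $U$ uses the defect identity $I-K_{f,T,J}^*K_{f,T,J}=\text{SOT-}\lim_{q\to\infty}\sum_{|\alpha|=q}[f(T)]_\alpha[f(T)]_\alpha^*$ from the same proposition, while the c.n.c.\ hypothesis supplies surjectivity onto $\mathbf{H}$ through the absence of any nonzero $h$ with $\langle\Phi_{f,T}^m(I)h,h\rangle=\|h\|^2$ for every $m$. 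I expect the main obstacle to be precisely this construction of the defect component and verifying that its range lies in $\mathbf{H}$; this should require factoring the positive operator $I-K_{f,T,J}^*K_{f,T,J}$ through $\overline{\Delta_{\Theta_{f,T,J}}(\cN_{f,J}\otimes\cD_{f,T^*})}$ in a way that is canonical relative to $\Theta_{f,T,J}$, so that the pairing $Uh = K_{f,T,J}h\oplus(\text{defect piece})$ automatically lies in the orthogonal complement of $\mathbf{G}$.

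For the pure case, if $T\in\cV_{f,J}^{pure}(\cH)$ then Proposition~\ref{KJT} gives $K_{f,T,J}^*K_{f,T,J}=I$, so $K_{f,T,J}$ is isometric; then the defect component of $U$ must vanish, which forces $\Theta_{f,T,J}$ to be a partial isometry. Conversely, a partial isometry $\Theta_{f,T,J}$ causes the second summand of $\mathbf{K}$ to collapse into $\mathbf{G}$, so $\mathbf{H}$ reduces to $(\cN_{f,J}\otimes\cD_{f,T})\ominus\Theta_{f,T,J}(\cN_{f,J}\otimes\cD_{f,T^*})$ and $\mathbf{T}_i$ is visibly the compression of $B_i\otimes I_{\cD_{f,T}}$ to this subspace.
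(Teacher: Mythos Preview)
Your approach differs from the paper's, and there is a real gap at the point you yourself flag. You propose to build the unitary $U:\cH\to\mathbf{H}$ as $Uh = K_{f,T,J}h\oplus(\text{defect piece})$, but you never produce this piece, and factoring $I-K_{f,T,J}^*K_{f,T,J}$ through $\overline{\Delta_{\Theta_{f,T,J}}(\cN_{f,J}\otimes\cD_{f,T^*})}$ so that the image lands in $\mathbf{H}$ is not at all obvious.

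The paper sidesteps this entirely by working in the adjoint direction. The key identity is not the one you use but rather
\[
I_{\cN_{f,J}\otimes\cD_{f,T}}-\Theta_{f,T,J}\Theta_{f,T,J}^*=K_{f,T,J}K_{f,T,J}^*,
\]
obtained by compressing the unconstrained factorization $I-\Theta_{f,T}\Theta_{f,T}^*=K_{f,T}K_{f,T}^*$ to $\cN_{f,J}\otimes\cD_{f,T}$ (using $\text{range}\,K_{f,T}\subseteq\cN_{f,J}\otimes\cD_{f,T}$). Combining this with the fact that $\Phi x=\Theta_{f,T,J}x\oplus\Delta_{\Theta_{f,T,J}}x$ is an isometry gives $\|K_{f,T,J}^*y\|=\|P_{\mathbf H}(y\oplus 0)\|$ for every $y\in\cN_{f,J}\otimes\cD_{f,T}$. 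The c.n.c.\ hypothesis makes $K_{f,T,J}$ injective, so $\text{range}\,K_{f,T,J}^*$ is dense in $\cH$; a separate argument shows $\{P_{\mathbf H}(y\oplus 0):y\in\cN_{f,J}\otimes\cD_{f,T}\}$ is dense in $\mathbf H$. Thus $\Gamma(K_{f,T,J}^*y):=P_{\mathbf H}(y\oplus 0)$ extends to a unitary $\cH\to\mathbf H$, and one \emph{defines} $\mathbf T_i:=\Gamma T_i\Gamma^*$ and verifies the implicit relation via $P_{\cN_{f,J}\otimes\cD_{f,T}}\Gamma=K_{f,T,J}$ together with Proposition~\ref{KJT}. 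No explicit $D_i$ operators are ever introduced.

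There is also a secondary issue with your claim that the $D_i$ computation from Theorem~\ref{funct-model1} ``carries over verbatim.'' That computation relies on $M_{Z_i}^*M_{Z_i}$ being a scalar multiple of the identity, whereas $B_i^*B_i=M_{Z_i}^*P_{\cN_{f,J}}M_{Z_i}|_{\cN_{f,J}}$ need not be scalar; so well-definedness and boundedness of your $D_i$ on $\overline{\Delta_{\Theta_{f,T,J}}(\cN_{f,J}\otimes\cD_{f,T^*})}$ require a separate argument you have not supplied.

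For the pure case the paper again uses the displayed identity directly: if $T$ is pure then $K_{f,T,J}$ is isometric, so $K_{f,T,J}K_{f,T,J}^*$ and hence $\Theta_{f,T,J}\Theta_{f,T,J}^*$ are projections; conversely, if $\Theta_{f,T,J}$ is a partial isometry then $K_{f,T,J}K_{f,T,J}^*$ is a projection, and together with the already-established injectivity of $K_{f,T,J}$ and Proposition~\ref{KJT} this forces $K_{f,T,J}$ to be an isometry, i.e.\ $T$ is pure.
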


\begin{proof}
Taking into account that $\cN_{f,J}$ is  a co-invariant subspace
under $\Lambda_1,\ldots, \Lambda_n$, we can see that
\begin{equation*}
\begin{split}
\Theta_{T}(\Lambda_1,\ldots, \Lambda_n)^*(\cN_{f,J}\otimes\cD_{f,T})
&\subseteq \cN_{f,J}\otimes \cD_{f, T^*}\ \text{  and  }\\
P_{\cN_{f,J}\otimes \cD_{f,T}}\Theta_{f,T}(\Lambda_1,\ldots,
\Lambda_n)|\cN_{f,J}\otimes \cD_{f, T^*}&=\Theta_{f,T,
J}(W_1,\ldots, W_n).
\end{split}
\end{equation*}
Hence, using relation   $
K_{f,T}K_{f,T}^*+\Theta_{f,T}\Theta_{f,T}^*= I_{\HH^2(f)\otimes
{\cD_{f,T}}}$ (see \cite{Po-multi}),   the fact that $\text{\rm
range}\, K_{f,T}\subseteq \cN_{f,J}\otimes \cD_{f,T}$  and
$W_i^*=\Lambda_i^*|_{\cN_{f,J}}$, \ $i=1,\ldots, n$,    we deduce
that

\begin{equation}\label{J-fa}
I_{\cN_{f,J}\otimes \cD_{f,T}}-\Theta_{f,T,
J}\Theta_{f,T,J}^*=K_{f,T,J}K_{f,T,J}^*,
\end{equation}
where $\Theta_{f,T,J}$ is  the  constrained characteristic function
of $f$, $T$ and $J$, and  $K_{f,T,J}$ is  the corresponding
constrained Poisson kernel.

Now, we introduce  the Hilbert space
$$
{\bf K}:=\left(\cN_{f,J}\otimes \cD_{f,T}\right)\oplus
\overline{\Delta_{\Theta_{f,T,J}}(\cN_{f,J}\otimes \cD_{f,T^*})}
$$
and  define the operator $\Phi: \cN_{f,J}\otimes \cD_{f, T^*}\to
{\bf K}$ by setting
\begin{equation}
\label{PhiTheta} \Phi x:=\Theta_{f,T,J} x\oplus
\Delta_{\Theta_{f,T,J}} x,\qquad x\in \cN_{f,J}\otimes \cD_{f,T^*}.
\end{equation}
It is easy to see that $\Phi$ is an isometry and
\begin{equation}\label{fi}
\Phi^*(y\oplus 0)=\Theta_{f,T,J}^*y, \qquad y\in \cN_{f,J}\otimes
\cD_{f,T}.
\end{equation}
Hence, letting  $P_{{\bf H}}$  be the orthogonal projection of ${\bf
K}$ onto the subspace ${\bf H}$, we have
\begin{equation*}
\|y\|^2= \|P_{{\bf H}}(y\oplus 0)\|^2+\|\Phi \Phi^*(y\oplus 0)\|
=\|P_{{\bf H}}(y\oplus 0)\|^2+\|\Theta_{f,T,J}^*y\|^2
\end{equation*}
for any $y\in \cN_{f,J}\otimes \cD_{f,T}$. Note  also that relation
\eqref{J-fa} implies
$$
\|K_{f,T,J}^* y\|^2+ \|\Theta_{f,T,J}^*y\|^2=\|y\|^2, \qquad y\in
\cN_{f,J}\otimes \cD_{f,T}.
$$
Consequently,  we deduce that
\begin{equation}\label{K*P}
\|K_{f,T,J} y\|=\|P_{{\bf H}}(y\oplus 0)\|,  \qquad y\in
\cN_{f,J}\otimes \cD_{f,T}.
\end{equation}
Now, we prove that
  $K_{f,T,J}$ is a one-to-one operator.   Indeed, due to Proposition \ref{KJT}, for
any $h\in \cH$, we have
$$
 \| K_{f,T,J}h\|^2=\|h\|^2- \lim_{q\to \infty}
\sum_{\alpha\in \FF_n,\, |\alpha|=q} \| [f (T)]_\alpha^*h\|^2.
$$
Consequently, if $K_{f,T,J} h=0$, then $\|h\|^2= \lim_{q\to \infty}
\sum_{\alpha\in \FF_n,\, |\alpha|=q} \| [f (T)]_\alpha^*h\|^2$.
Since $[f_1(T),\ldots, f_n(T)]$ is a row contraction, the latter
relation implies $\|h\|^2=  \sum_{\alpha\in \FF_n,\, |\alpha|=q} \|
[f (T)]_\alpha^*h\|^2$ for any $q\in \NN$. Since  $T\in
\BB_f^{cnc}(\cH)$, we deduce that $h=0$, which proves that
$K_{f,T,J}$ is a one-to-one operator and, therefore,  the range of
$K_{f,T,J}^*$ is dense in $\cH$.

Next, we show that
\begin{equation}
\label{H}
 {\bf H}=\left\{P_{{\bf H}}(y\oplus 0):\ y\in
\cN_{f,J}\otimes \cD_{f,T}\right\}^{-}.
\end{equation}
Let $x\in {\bf H}$ and assume that $x\perp P_{{\bf H}}(y\oplus 0)$
for any $y\in \cN_{f,J}\otimes \cD_{f,T}$. Using the definition of
${\bf H}$ and the fact that ${\bf K}$  is the closed span of  all
the vectors $y\oplus 0$  for $y\in \cN_{f,J}\otimes \cD_T$
 and $\Theta_{J,T} x\oplus \Delta_{\Theta_{f,T,J}}
x$ for $x\in \cN_{f,J}\otimes \cD_{f,T^*}$, we deduce that $x=0$.
Therefore, relation \eqref{H} holds.

Note that,  due to relation \eqref{K*P} and \eqref{H},  there is a
unique unitary operator $\Gamma:\cH\to {\bf H}$ such that
\begin{equation}\label{Ga}
\Gamma(K_{f,T,J}^* y)=P_{{\bf H}}(y\oplus 0), \qquad  y\in
\cN_{f,J}\otimes \cD_{f,T}.
\end{equation}
Using  relations \eqref{J-fa} and  \eqref{fi},  and the fact that
$\Phi$ is an isometry defined by \eqref{PhiTheta}, we have
\begin{equation*}
\begin{split}
P_{\cN_{f,J}\otimes \cD_{f,T}} \Gamma K_{f,T,J}^* y&=
P_{\cN_{f,J}\otimes \cD_{f,T}} P_{{\bf H}}(y\oplus 0) =
y-P_{\cN_{f,J}\otimes \cD_{f,T}} \Phi \Phi^*(y\oplus 0)\\
&=y-\Theta_{f,T,J} \Theta_{f,T,J}^* y =K_{f,T,J} K_{f,T,J}^*y
\end{split}
\end{equation*}
for any $y\in \cN_{f,J}\otimes \cD_{f,T}$. Hence,  and using the
fact that the range of $K_{f,T,J}^*$ is dense in $\cH$, we obtain
relation
\begin{equation}
\label{PGK} P_{\cN_{f,J}\otimes \cD_{f,T}} \Gamma=K_{f,T,J}.
\end{equation}
Now, we define ${\bf T}_i:{\bf H}\to {\bf H}$ be the transform of
$T_i$ under the unitary operator $\Gamma:\cH\to {\bf H}$ defined  by
\eqref{Ga}. More precisely, we set ${\bf T}_i:=\Gamma T_i\Gamma^*$,\
$i=1,\ldots, n$.   Since $K_{f,T,J}$ is one-to-one, relation
\eqref{PGK} implies that
\begin{equation*}
  P_{\cN_{f,J}\otimes \cD_{f,T}} |_{{\bf H}}= K_{f,T,J}
\Gamma^*
\end{equation*}
is a one-to-one operator acting from ${\bf H}$ to $\cN_{f,J}\otimes
\cD_{f,T}$. Due to relation \eqref{PGK} and Proposition \ref{KJT},
we obtain
\begin{equation*}
\begin{split}
\left(P_{\cN_{f,J}\otimes \cD_{f,T}} |_{{\bf H}}\right) {\bf
T}_i^*\Gamma h&= \left(P_{\cN_{f,J}\otimes \cD_{f,T}} |_{{\bf
H}}\right) \Gamma T_i^* h =K_{f,T,J} T_i^*h\\
&= \left( B_i^*\otimes I_{\cD_{f,T}}\right) K_{f,T,J}h = \left(
B_i^*\otimes I_{\cD_{f,T}}\right) \left(P_{\cN_{f,J}\otimes
\cD_{f,T}} |_{{\bf H}}\right)\Gamma h
\end{split}
\end{equation*}
for any $h\in \cH$. Consequently,

\begin{equation}
\label{def} \left( P_{\cN_{f,J}\otimes \cD_{f,T}}|_{{\bf H}}\right)
{\bf T}_i^*x= (B_i^*\otimes I_{\cD_{f,T}})\left( P_{\cN_{f,J}\otimes
\cD_{f,T}}|_{\bf H}\right)x, \qquad x\in {\bf H}.
\end{equation}
Due to the fact that  the operator $P_{\cN_{f,J}\otimes
\cD_{f,T}}|_{{\bf H}}$ is one-to-one, the relation \eqref{def}
uniquely determines the operators ${\bf T}_i^*$ for $i=1,\ldots, n$.

Now, we assume that $T:=(T_1,\ldots, T_n)\in \cV_{f,J}^{pure}(\cH)$.
 According to Proposition \ref{KJT}, the constrained Poisson
 kernel $K_{f,T,J}:\cH\to \cN_{f,J}\otimes \cD_{f,T}$
 is an isometry and, therefore,
  $K_{f,T,J}K_{f,T,J}^*$ is the orthogonal projection of
$\cN_{f,J}\otimes \cD_{f,T}$ onto $K_{f,T,J}\cH$. Relation
\eqref{J-fa} implies that $K_{f,T,J}K_{f,T,J}^*$ and
$\Theta_{f,T,J}\Theta_{f,T,J}^*$ are mutually orthogonal projections
such that
$$
K_{f,T,J}K_{f,T,J}^*+\Theta_{f,T,J}\Theta_{f,T,J}^*=
I_{\cN_{f,J}\otimes \cD_{f,T}}.
$$
This shows that  $\Theta_{f,T,J}$ is a partial isometry and
$\Theta_{f,T,J}^* \Theta_{f,T,J}$ is a projection. Consequently,
$\Delta_{\Theta_{f,T,J}}$ is the projection  on the orthogonal
complement of  the range of $ \Theta_{f,T,J}^*$.

We remark that a vector $u\oplus \Delta_{\Theta_{f,T,J}}v\in {\bf
K}$ is in ${\bf H}$ if and only if
$$
\left< u\oplus \Delta_{\Theta_{f,T,J}}v,\Theta_{f,T,J}x\oplus
\Delta_{\Theta_{f,T,J}}x\right>=0\quad \text{ for any } \ x\in
\cN_{f,J}\otimes \cD_{f,T^*},
$$
which is equivalent to
\begin{equation}
\label{TH*} \Theta_{f,T,J}^*u+\Delta^2_{\Theta_{f,T,J}}v=0.
\end{equation}
Since  $\Theta_{f,T,J}^*u\perp \Delta^2_{\Theta_{f,T,J}}v$, relation
\eqref{TH*} holds if and only if $\Theta_{f,T,J}^*u=0$ and
$\Delta_{\Theta_{f,T,J}}v=0$. Consequently, we have
$${\bf H}=\left(\cN_{f,J}\otimes \cD_{f,T}\right)\ominus
\Theta_{f,T,J}(\cN_{f,J}\otimes \cD_{f,T^*}).
$$
Note that $P_{\cN_{f,J}\otimes \cD_{f,T}}|_{{\bf H}}$ is the
restriction operator and relation \eqref{def} implies $ {\bf T}_i=
P_ {{\bf H}} (B_i\otimes I_{\cD_{f,T}})|_{\bf H}$ for  $i=1,\ldots,
n$.

 Conversely,  if $\Theta_{f,T,J}$ is   is a partial
isometry, relation \eqref{J-fa} implies that $K_{f,T,J}$ is a
partial isometry. On the other hand, since $T$ is c.n.c.,
Proposition \ref{KJT} implies
$$
\text{\rm SOT-}\,\lim_{k\to\infty}\sum_{|\alpha|=k} [f(T)]_\alpha
[f(T)]_\alpha^*=0,
$$
which proves that $T\in \cV_{f,J}^{pure}(\cH)$.
    This completes the proof.
\end{proof}

 We remark that,
if $T:=(T_1,\ldots, T_n)\in \cV_{f,J}^{cnc}(\cH)$, then
$\Theta_{f,T,J}$   has dense range if and only if there is no
element $h\in \cH$, $h\neq 0$, such that
\begin{equation*}
 \lim_{k\to\infty}\sum_{|\alpha|=k} [f(T)]_\alpha
 [f(T)]_\alpha^*h=0.
\end{equation*}
Indeed, due to Proposition \ref{KJT}, the condition above is
equivalent to $\ker \left( I-K_{f,T,J}^* K_{f,T,J}\right)=\{0\}$.
Using relation \eqref{J-fa} , we deduce that the latter equality is
equivalent to
$$
\ker \Theta_{f,T,J} \Theta_{f,T,J}^*=\ker \left( I-K_{f,T,J}
K_{f,T,J}^*\right)=\{0\},
$$
which implies that  $\Theta_{f,T,J}$ has dense range.

\begin{proposition}  Let $f=(f_1,\ldots, f_n)$ be an  $n$-tuple of formal power
series  with the model property and  let $J\neq H^\infty(\BB_f)$ be
a WOT-closed two-sided  ideal of $H^\infty(\BB_f)$ such that $1\in
\cN_{f,J}$. Then  $T:=(T_1,\ldots, T_n)\in \cV_{f,J}^{cnc}(\cH)$ is
unitarily equivalent to  the universal $n$-tuple  $(B_1\otimes
I_\cK,\ldots, B_n\otimes I_\cK)$ for some Hilbert space $\cK$ if and
only if  \ $\Theta_{f,T,J}=0$.
\end{proposition}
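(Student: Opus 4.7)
The plan is to handle the two directions separately. The forward implication is immediate from Theorem \ref{model}: if $\Theta_{f,T,J}=0$, then trivially $\Theta_{f,T,J}$ is a partial isometry, so $T\in \cV_{f,J}^{pure}(\cH)$ and the pure-case description in that theorem gives
$${\bf H}=(\cN_{f,J}\otimes \cD_{f,T})\ominus \Theta_{f,T,J}(\cN_{f,J}\otimes \cD_{f,T^*})=\cN_{f,J}\otimes \cD_{f,T}.$$
Hence the compression $P_{\bf H}$ is the identity and $T$ is unitarily equivalent to $(B_1\otimes I_{\cD_{f,T}},\ldots,B_n\otimes I_{\cD_{f,T}})$; take $\cK:=\cD_{f,T}$.

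For the converse, assume $T$ is unitarily equivalent to $V:=(B_1\otimes I_\cK,\ldots,B_n\otimes I_\cK)$ on $\cN_{f,J}\otimes \cK$. The strategy is to compute the constrained Poisson kernel $K_{f,V,J}$ explicitly and invoke the key identity
$$I_{\cN_{f,J}\otimes \cD_{f,V}}-\Theta_{f,V,J}\Theta_{f,V,J}^*=K_{f,V,J}K_{f,V,J}^*$$
from the proof of Theorem \ref{model}. First I will identify $\Delta_{f,B}^2$: using $f_i(B)^*=M_{f_i}^*|_{\cN_{f,J}}$ (a consequence of $\chi(B)=P_{\cN_{f,J}}\chi(M_Z)|_{\cN_{f,J}}$ and the $M_{f_i}^*$-invariance of $\cN_{f,J}$) together with $\sum_i M_{f_i}M_{f_i}^*=I-P_\CC$ on $\HH^2(f)$, one computes $\sum_{i}f_i(B)f_i(B)^*\xi=P_{\cN_{f,J}}(I-P_\CC)\xi$ for $\xi\in \cN_{f,J}$. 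The hypothesis $1\in \cN_{f,J}$ forces $P_{\cN_{f,J}}P_\CC=P_\CC$, whence $\Delta_{f,B}^2=P_{\CC 1}|_{\cN_{f,J}}$, the rank-one projection onto $\CC 1$. Therefore $\Delta_{f,V}=P_{\CC 1}\otimes I_\cK$ and $\cD_{f,V}\cong \cK$ under the natural identification $\CC 1\otimes \cK\cong \cK$.

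The main computation is then that of $K_{f,V,J}$ on an elementary tensor $\xi\otimes k\in \cN_{f,J}\otimes \cK$. Using $P_\CC M_{f_\alpha}^*\xi=\langle \xi,f_\alpha\rangle 1$ and the expansion $\xi=\sum_\alpha \langle \xi,f_\alpha\rangle f_\alpha$ for $\xi\in \cN_{f,J}\subseteq \HH^2(f)$, the defining formula for $K_{f,V}$ reduces to
$$K_{f,V}(\xi\otimes k)=\sum_{\alpha\in\FF_n^+} f_\alpha\otimes \langle \xi,f_\alpha\rangle(1\otimes k)=\xi\otimes (1\otimes k),$$
which equals $\xi\otimes k$ after the identification $\CC 1\otimes \cK\cong \cK$. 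Since $\xi\in \cN_{f,J}$, the projection $P_{\cN_{f,J}}\otimes I$ is trivial, so $K_{f,V,J}$ is the identity. Consequently $K_{f,V,J}K_{f,V,J}^*=I$, which by the key identity forces $\Theta_{f,V,J}\Theta_{f,V,J}^*=0$ and hence $\Theta_{f,T,J}=\Theta_{f,V,J}=0$. The main obstacle is keeping the identification $\CC 1\otimes \cK\cong \cK$ consistent throughout the computation; the assumption $1\in \cN_{f,J}$ is precisely what makes $\Delta_{f,B}^2$ a rank-one projection and causes $K_{f,V,J}$ to collapse to this identification.
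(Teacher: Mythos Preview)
Your proof is correct and follows essentially the same approach as the paper. Both arguments identify $\Delta_{f,B}^2$ with the rank-one projection onto $\CC 1$ (using $1\in\cN_{f,J}$), compute the constrained Poisson kernel of the universal model to be the identity, and then invoke the factorization $I-\Theta_{f,T,J}\Theta_{f,T,J}^*=K_{f,T,J}K_{f,T,J}^*$ to force $\Theta_{f,T,J}=0$; the converse direction via Theorem~\ref{model} is likewise identical, with the two directions merely presented in the opposite order.
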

\begin{proof} First, we assume that
  $T=(B_1\otimes I_\cK,\ldots, B_n\otimes I_\cK)$ and prove that
$K_{f,T,J}F=F$ for $F\in \cN_{f,J}\otimes \cK$. Since $1\in
\cN_{f,J}$, a straightforward calculation  shows that
$\Delta_{f,T}=P_{  \cK}|_{\cN_{f,J}\otimes \cK}$ as an operator
acting on $\cN_{f,J}\otimes \cK$. Indeed, note that
\begin{equation*}
\begin{split}
\Delta^2_{f,T}&=I_{\cN_{f,J}\otimes \cK}-\sum_{i=1}^n
f_i(B)f_i(B)^*\otimes I_\cK\\
&=P_{\cN_{f,J}\otimes \cK}\left(I_{\HH^2(f)\otimes \cK}-\sum_{i=1}^n
f_i(M_Z)f_i(M_Z)^*\otimes I_\cK\right)|_{\cN_{f,J}\otimes \cK}\\
&=P_{\cN_{f,J}\otimes \cK}\left(I_{\HH^2(f)\otimes \cK}-\sum_{i=1}^n
M_{f_i}M_{f_i} ^*\otimes I_\cK\right)|_{\cN_{f,J}\otimes \cK}\\
&=P_\cK|_{\cN_{f,J}\otimes \cK}.
\end{split}
\end{equation*}
Here we used the natural identification of $1\otimes \cK$ with
$\cK$. Since $\cD_{f,T}=\cK$, using the definition of the
constrained Poisson kernel $K_{f,T,J}$,   for any
$F=\sum\limits_{\beta\in \FF_n^+} f_\beta\otimes k_\beta$ in
$\cN_{f,J}\otimes \cK\subseteq \HH^2(f)\otimes \cK$, we have

\begin{equation*}
\begin{split}
K_{f,T,J}F&=\sum\limits_{\alpha\in \FF_n^+}
P_{\cN_{f,J}}f_\alpha\otimes P_{\cK}([f(B)]_\alpha^*\otimes I_\cK)F=
 \sum\limits_{\alpha\in \FF_n^+} P_{\cN_{f,J}}f_\alpha\otimes
 P_{ \cK}([f(M_Z)]_\alpha^*\otimes I_\cK)F\\
 &=
 \sum\limits_{\alpha\in \FF_n^+} P_{\cN_{f,J}}f_\alpha\otimes
 P_{ \cK}(M_{f_\alpha}^*\otimes I_\cK)F
= \sum\limits_{\alpha\in \FF_n^+} P_{\cN_J}f_\alpha\otimes
k_\alpha\\
&=P_{\cN_{f,J}\otimes \cK} F=F.
 \end{split}
 \end{equation*}
Due to relation \eqref{J-fa} we have  $\Theta_{f,T,J}=0$.
Conversely, if $\Theta_{f,T,J}=0$, then Theorem \ref{model} shows
that $T$ is unitarily equivalent to   $(B_1\otimes
I_{\cD_{f,T}},\ldots, B_n\otimes I_{\cD_{f,T}})$. This completes the
proof.
\end{proof}

Let  $\Phi: \cN_{f,J}\otimes \cK_1\to \cN_{f,J}\otimes \cK_2$ and
$\Phi':\cN_{f,J}\otimes\cK_1'\to \cN_{f,J}\otimes  \cK_2'$ be two
multi-analytic operators with respect to $B_1,\ldots, B_n$, i.e.,
  $\Phi(B_i\otimes
I_{\cK_1})=(B_i\otimes I_{\cK_2})\Phi$  and $\Phi'(B_i\otimes
I_{\cK_1'})=(B_i\otimes I_{\cK_2'})\Phi'$ for any $i=1,\ldots,n$. We
say that $\Phi$ and $\Phi'$ coincide if there are two unitary
operators $\tau_j\in B(\cK_j, \cK_j')$, $j=1,2$,  such that
$$
\Phi'(I_{\cN_{f,J}}\otimes \tau_1)=(I_{\cN_{f,J}}\otimes \tau_2)
\Phi.
$$

The next result shows that the constrained characteristic function
is a complete unitary invariant for the   $n$-tuples of operators in
the noncommutative variety $\cV_{f,J}^{cnc}(\cH)$.

\begin{theorem}\label{u-inv2} Let $f=(f_1,\ldots, f_n)$ be an  $n$-tuple of formal power
series  with the model property and  let $J\neq H^\infty(\BB_f)$ be
a WOT-closed two-sided  ideal of $H^\infty(\BB_f)$.
 If  $T:=(T_1,\ldots, T_n)\in \cV_{f,J}^{cnc}(\cH) $  and
 $T':=(T_1',\ldots, T_n')\in  \cV_{f,J}^{cnc}(\cH')$,  then $T$ and $T'$ are
unitarily equivalent if and only if their  constrained
characteristic functions $\Theta_{f,T,J}$  and $\Theta_{f,T',J}$
coincide.
\end{theorem}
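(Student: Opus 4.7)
The plan is to transfer the argument of Theorem \ref{u-inv} from $\BB_f^{cnc}(\cH)$ to the constrained variety $\cV_{f,J}^{cnc}(\cH)$, using Theorem \ref{model} in place of Theorem \ref{funct-model1} and the universal $n$-tuple $(B_1,\ldots,B_n)$ on $\cN_{f,J}$ in place of $(M_{Z_1},\ldots,M_{Z_n})$ on $\HH^2(f)$. The constrained characteristic function $\Theta_{f,T,J}$ has the same structural shape as $\Theta_{f,T}$, with $W_i:=P_{\cN_{f,J}}\Lambda_i|_{\cN_{f,J}}$ replacing $\Lambda_i$ on the first tensor factor; crucially, its $T$-dependence enters only through $f_i(T)$, $\Delta_{f,T}$, and $\Delta_{f,T^*}$.

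For necessity, let $U\colon \cH\to \cH'$ be unitary with $T_i=U^*T_i'U$. Since each evaluation $f_i(T)$ is determined from $T$ by a polynomial or (radial) SOT construction, it commutes with unitary conjugation, giving $f_i(T)=U^*f_i(T')U$, hence $U\Delta_{f,T}=\Delta_{f,T'}U$ and $U^{(n)}\Delta_{f,T^*}=\Delta_{f,T'^*}U^{(n)}$ with $U^{(n)}:=\oplus_{i=1}^n U$. Set $\tau:=U|_{\cD_{f,T}}$ and $\tau_*:=U^{(n)}|_{\cD_{f,T^*}}$. Substituting these identities into the Fourier representation of $\Theta_{f,T,J}$, exactly as in the proof of Theorem \ref{u-inv}, yields $(I_{\cN_{f,J}}\otimes \tau)\Theta_{f,T,J}=\Theta_{f,T',J}(I_{\cN_{f,J}}\otimes \tau_*)$, so the constrained characteristic functions coincide.

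For sufficiency, I reverse the construction. Starting from unitaries $\tau,\tau_*$ witnessing the coincidence, passing to $\Theta_{f,T,J}^*\Theta_{f,T,J}$ and applying the continuous functional calculus yields the analogue of \eqref{DETh}, namely $\Delta_{\Theta_{f,T,J}}=(I_{\cN_{f,J}}\otimes \tau_*)^*\Delta_{\Theta_{f,T',J}}(I_{\cN_{f,J}}\otimes \tau_*)$. Define the unitary $\Gamma:=(I_{\cN_{f,J}}\otimes \tau)\oplus(I_{\cN_{f,J}}\otimes \tau_*)$ between the ambient Hilbert spaces of Theorem \ref{model}. The two intertwining identities force $\Gamma$ to carry the graph subspace $\{\Theta_{f,T,J}x\oplus \Delta_{\Theta_{f,T,J}}x\}$ onto its $T'$-counterpart, and consequently the model subspace ${\bf H}$ associated with $T$ onto the model subspace ${\bf H}'$ associated with $T'$.

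The main obstacle is that Theorem \ref{model}(ii) specifies ${\bf T}_i^*$ only implicitly, via the relation $(P_{\cN_{f,J}\otimes \cD_{f,T}}|_{{\bf H}}){\bf T}_i^*=(B_i^*\otimes I_{\cD_{f,T}})(P_{\cN_{f,J}\otimes \cD_{f,T}}|_{{\bf H}})$, where uniqueness rests on the injectivity of the restricted projection. To verify $\Gamma|_{{\bf H}}{\bf T}_i^*={\bf T}_i'^*\Gamma|_{{\bf H}}$, I would compose both sides with $P_{\cN_{f,J}\otimes \cD_{f,T'}}|_{{\bf H}'}$. The block-diagonal form of $\Gamma$ gives the identity $(P_{\cN_{f,J}\otimes \cD_{f,T'}}|_{{\bf H}'})\Gamma|_{{\bf H}}=(I_{\cN_{f,J}}\otimes \tau)(P_{\cN_{f,J}\otimes \cD_{f,T}}|_{{\bf H}})$, and the trivial commutation $(I\otimes \tau)(B_i^*\otimes I_{\cD_{f,T}})=(B_i^*\otimes I_{\cD_{f,T'}})(I\otimes \tau)$ allows me to chain these so that both sides of the desired intertwining agree after the $T'$-projection. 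Injectivity of $P_{\cN_{f,J}\otimes \cD_{f,T'}}|_{{\bf H}'}$ then yields the intertwining on ${\bf H}$, and combining with the unitary equivalences $T\simeq{\bf T}$ and $T'\simeq{\bf T}'$ from Theorem \ref{model} concludes the proof.
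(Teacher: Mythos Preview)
Your proposal is correct and follows essentially the same route as the paper's proof: the necessity direction is identical, and for sufficiency both you and the paper build the block-diagonal unitary $(I_{\cN_{f,J}}\otimes\tau)\oplus(I_{\cN_{f,J}}\otimes\tau_*)$, verify it carries the graph subspace to the graph subspace (hence ${\bf H}$ to ${\bf H}'$), and then use the projection intertwining together with the trivial commutation $(I\otimes\tau)(B_i^*\otimes I)=(B_i^*\otimes I)(I\otimes\tau)$ and the injectivity of $P_{\cN_{f,J}\otimes\cD_{f,T'}}|_{{\bf H}'}$ to deduce $(\Gamma|_{\bf H}){\bf T}_i^*={\bf T}_i'^*(\Gamma|_{\bf H})$.
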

\begin{proof}  Let $W:\cH\to
\cH'$ be a unitary operator such that $T_i=W^*T_i'W$ for any
$i=1,\ldots, n$.  Since $T\in \cC_f^{SOT}(\cH)$ or $T\in
\cC_f^{rad}(\cH)$ and similar relations hold  for $T'$, it is easy
to see that
$$
W\Delta_{f,T}=\Delta_{f,T'}W \quad \text{ and }\quad (\oplus_{i=1}^n
W)\Delta_{f,T^*}=\Delta_{f,T'^*}(\oplus_{i=1}^n W).
$$
We introduce  the unitary operators $\tau$ and $\tau'$ by setting
$$\tau:=W|_{\cD_{f,T}}:\cD_{f,T}\to \cD_{f,T'} \quad \text{ and }\quad
\tau':=(\oplus_{i=1}^n W)|_{\cD_{f,T^*}}:\cD_{f,T^*}\to
\cD_{f,T'^*}.
$$
It is easy to see that
$$
(I_{\cN_{f,J}}\otimes \tau)\Theta_{f,T,J}=\Theta_{f,T',J}(I_{\cN_{f,J}}\otimes
\tau').
$$

Conversely, assume that the constrained characteristic functions  of
$T$ and $T'$ coincide, i.e.,  there exist unitary operators
$\tau:\cD_{f,T}\to \cD_{f,T'}$ and $\tau_*:\cD_{f,T^*}\to
\cD_{f,{T'}^*}$ such that
\begin{equation*}
(I_{\cN_{f,J}}\otimes \tau)\Theta_{f,T,J}=\Theta_{f,T',J}(I_{\cN_{f,J}}\otimes
\tau_*).
\end{equation*}
As consequences, we obtain
$$
\Delta_{\Theta_{J,T}}=\left(I_{\cN_{f,J}}\otimes \tau_*\right)^*
\Delta_{\Theta_{f,T',J}}\left(I_{\cN_{f,J}}\otimes \tau_*\right)
$$
and
$$
\left(I_{\cN_{f,J}}\otimes
\tau_*\right)\overline{\Delta_{\Theta_{J,T}}(\cN_{f,J}\otimes \cD_{f,T^*})}=
\overline{\Delta_{\Theta_{J,T'}}(\cN_{f,J}\otimes \cD_{f,{T'}^*})}.
$$
 Now, we define the unitary operator $U:{\bf K}\to {\bf K}'$
by setting
$$U:=(I_{\cN_{f,J}}\otimes \tau)\oplus (I_{\cN_{f,J}}\otimes
\tau_*),
$$
where the Hilbert spaces ${\bf K} $ and ${\bf K}'$  were defined in
Theorem \ref{model}. We remark that the operator
$\Phi:\cN_{f,J}\otimes \cD_{f,T^*}\to {\bf K}$, defined in the proof
of the same theorem and the corresponding $\Phi'$ satisfy the
relations
\begin{equation}
\label{Uni1} U \Phi\left(I_{\cN_{f,J}}\otimes
\tau_*\right)^*=\Phi'\quad \text{ and } \quad
  \left(I_{\cN_{f,J}}\otimes \tau\right) P_{\cN_{f,J}\otimes
\cD_{f,T}}^{\bf K} U^*=P_{\cN_{f,J}\otimes \cD_{f,T'}}^{\bf K'},
\end{equation}
where $P_{\cN_{f,J}\otimes \cD_{f,T}}^{\bf K}$ is the orthogonal
projection of ${\bf K}$ onto $\cN_{f,J}\otimes \cD_{f,T}$. Note that
relation \eqref{Uni1} implies
\begin{equation*}
\begin{split}
U{\bf H}&=U {\bf K}\ominus U\Phi(\cN_{f,J}\otimes \cD_{f,T^*})\\
&={\bf K}'\ominus \Phi'(I_{\cN_{f,J}}\otimes \tau_*)(\cN_{f,J}\otimes \cD_{f,T^*})\\
&={\bf K}'\ominus \Phi' (\cN_{f,J}\otimes \cD_{f,{T'}^*})={\bf H}'.
\end{split}
\end{equation*}
Consequently, the operator $U|_{\bf H}:{\bf H}\to {\bf H}'$ is
unitary. Note also that
\begin{equation}
\label{intertw2} (B_i^*\otimes I_{\cD_{f,T'}})(I_{\cN_{f,J}}\otimes
\tau)= (I_{\cN_{f,J}}\otimes \tau)(B_i^*\otimes I_{\cD_{f,T}}).
\end{equation}
Let ${\bf T}:=({\bf T}_1,\ldots {\bf T}_n)$ and ${\bf T}':=({\bf
T}_1',\ldots {\bf T}_n')$ be the models provided by Theorem
\ref{model}  for the $n$-tuples $T$ and $T'$, respectively. Using
the relation \eqref{def}  for $T'$ and $T$,  and  relations
\eqref{Uni1}, \eqref{intertw2},  we obtain
\begin{equation*}
\begin{split}
P_{\cN_{f,J}\otimes \cD_{f,T'}}^{\bf K'}{{\bf T}_i'}^*Ux
&=  (B_i^*\otimes I_{\cD_{T'}}) P_{\cN_{f,J}\otimes \cD_{f,T}}^{\bf K}Ux\\
&=(B_i^*\otimes I_{\cD_{f,T'}})(I_{\cN_{f,J}}\otimes \tau) P_{\cN_{f,J}\otimes \cD_{f,T}}^{\bf K}x\\
&=(I_{\cN_{f,J}}\otimes \tau)(B_i^*\otimes I_{\cD_{f,T}})P_{\cN_{f,J}\otimes \cD_{f,T}}^{\bf K}x\\
&=(I_{\cN_{f,J}}\otimes \tau) P_{\cN_{f,J}\otimes \cD_{f,T}}^{\bf K}
{\bf T}_i^*x\\
&= P_{\cN_{f,J}\otimes \cD_{f,T'}}^{\bf K'}U {\bf T}_i^*x
\end{split}
\end{equation*}
for any $x\in {\bf H}$ and $i=1,\ldots, n$. Since
$P_{\cN_{f,J}\otimes \cD_{f,T'}}^{\bf K'}$ is a one-to-one operator
(see the proof of  Theorem \ref{model}), we deduce that
$$
\left(U|_{\bf H}\right) {\bf T}_i^*={{\bf T}_i'}^*\left(U|_{\bf
H}\right),\qquad i=1,\ldots,n.
$$
 According to Theorem \ref{model}, the $n$-tuples  $T$ and $T'$ are unitarily equivalent.
  The proof is complete.
\end{proof}

\bigskip

\section{ Dilation theory on noncommutative varieties }

In this section, we develop a dilation theory for $n$-tuples of
operators  in the noncommutative variety $$ \{(T_1,\ldots,T_n)\in
\BB_f(\cH): \   (q\circ f)(T_1,\ldots,T_n)=0,\  q\in \cP\},
$$
 where $\cP$ is a set of
homogeneous noncommutative polynomials.

Let $f=(f_1,\ldots, f_n)$ be an $n$-tuple of formal power  series
with the model property   and let $J$ be a WOT-closed two-sided
ideal of $H^\infty(\BB_f)$. We recall that the universal model
 $B=(B_1,\ldots, B_n) $  for the noncommutative variety
 $\cV_{f,J}^{cnc}$ is defined by
$B_i:=P_{\cN_{f,J}} M_{Z_i} |_{\cN_{f,J}}$,  for  $i=1,\ldots, n$,
  where
 $$
 \cN_{f,J}:= \HH^2(f)\ominus \cM_{f,J}\quad \text{ and }\quad
 \cM_{f,J}:=\overline{ J \HH^2(f)}.
 $$

\begin{theorem}\label{compact2}  Let $f=(f_1,\ldots, f_n)$ be  an  $n$-tuple of formal power
series with the model property and let $J$ be a WOT-closed two-sided
ideal of $H^\infty(\BB_f)$ such that $1\in\cN_{f,J}$. Then the
$C^*$-algebra  $C^*(B_1,\ldots, B_n)$ is irreducible.

If $f\in \cM^{||}$, then all the compact operators in $B(\cN_{f,J})$
are contained in the operator space
$$\overline{\text{\rm span}}\{B_\alpha B_\beta^*:\ \alpha,\beta\in \FF_n^+\}.
$$

\end{theorem}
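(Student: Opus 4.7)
The plan is to first identify the operator
$$Q_1 := I_{\cN_{f,J}} - \sum_{i=1}^n f_i(B) f_i(B)^*$$
as the rank-one orthogonal projection of $\cN_{f,J}$ onto $\CC\cdot 1$. Since $J$ is a two-sided ideal of $H^\infty(\BB_f)$, the subspace $\cM_{f,J}=\overline{J\HH^2(f)}$ is invariant under each $M_{f_i}$ (because $M_{f_i}\cdot J \subseteq J$), so $\cN_{f,J}$ is co-invariant under $M_{f_i}$ and therefore $f_i(B)^* = M_{f_i}^*|_{\cN_{f,J}}$. Combining this with the identity $\sum_i M_{f_i}M_{f_i}^* = I_{\HH^2(f)} - P_\CC$ recorded in the proof of Proposition \ref{compact}, I obtain
$$\sum_{i=1}^n f_i(B) f_i(B)^* = P_{\cN_{f,J}}\bigl(I_{\HH^2(f)} - P_\CC\bigr)|_{\cN_{f,J}} = I_{\cN_{f,J}} - P_{\cN_{f,J}} P_\CC|_{\cN_{f,J}},$$
and because $1\in\cN_{f,J}$, the operator $P_{\cN_{f,J}} P_\CC|_{\cN_{f,J}}$ is the rank-one projection onto $\CC\cdot 1$. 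I also record the useful identity $f_\alpha(B)\cdot 1 = P_{\cN_{f,J}}\, f_\alpha$ for all $\alpha\in\FF_n^+$, which is immediate from the compression formula and the co-invariance just noted; thus $\{f_\alpha(B)\cdot 1:\alpha\in\FF_n^+\}$ spans a dense subspace of $\cN_{f,J}=P_{\cN_{f,J}}\HH^2(f)$.

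For the irreducibility of $C^*(B_1,\ldots,B_n)$, let $T$ lie in its commutant, so $T$ commutes with every $B_i$ and every $B_i^*$. Since the model property of $f$ gives $f_i(B)$ as a (radial) SOT-limit of polynomials in $B_1,\ldots,B_n$, the commutation $TA_\lambda=A_\lambda T$ is preserved in the WOT-limit, and consequently $T$ commutes with each $f_i(B)$, each $f_i(B)^*$, and with $Q_1$. Hence $T\cdot 1 = T Q_1 \cdot 1 = Q_1 T\cdot 1 \in \CC\cdot 1$, say $T\cdot 1 = c\cdot 1$. Then $T\, f_\alpha(B)\cdot 1 = f_\alpha(B) \,T\cdot 1 = c\, f_\alpha(B)\cdot 1$ for every $\alpha\in \FF_n^+$, and by density of $\{f_\alpha(B)\cdot 1\}$ in $\cN_{f,J}$ this forces $T=cI$.

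For the statement about compact operators under the additional assumption $f\in\cM^{||}$, I will use that the $n$-tuple $(M_{Z_1},\ldots,M_{Z_n})$ is in the set of norm (or radial norm) convergence of $f$, so $M_{f_i}=f_i(M_Z)$ is a norm limit of polynomials in $M_{Z_1},\ldots,M_{Z_n}$; passing to compressions, $f_i(B)$ is a norm limit of polynomials in $B_1,\ldots, B_n$, hence lies in $\overline{\text{\rm span}}\{B_\alpha B_\beta^*\}$. It follows that $Q_1$ itself lies in $\overline{\text{\rm span}}\{B_\alpha B_\beta^*:\alpha,\beta\in\FF_n^+\}$. For any $\alpha,\beta\in\FF_n^+$, the operator $f_\alpha(B)\, Q_1\, f_\beta(B)^*$ is the rank-one operator
$$\xi\longmapsto \bigl\langle \xi,\, P_{\cN_{f,J}} f_\beta\bigr\rangle\, P_{\cN_{f,J}} f_\alpha,$$
and it stays in $\overline{\text{\rm span}}\{B_\alpha B_\beta^*\}$ because this norm-closed subspace is invariant under left multiplication by $B_\gamma$ and right multiplication by $B_\gamma^*$, hence by their norm limits. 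Since $\{P_{\cN_{f,J}} f_\alpha\}$ is dense in $\cN_{f,J}$, finite linear combinations of these particular rank-one operators approximate in norm every rank-one operator, hence every finite-rank operator, hence every compact operator on $\cN_{f,J}$.

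The main obstacle is the clean algebraic identification of $Q_1$ together with the co-invariance of $\cN_{f,J}$ under the $M_{f_i}$'s: once these are in place, both conclusions reduce to the standard $C^*$-algebraic routine of passing from a rank-one projection onto a cyclic vector to irreducibility and to the ideal of compact operators, exactly parallel to (but more delicate than) the unconstrained argument of Proposition \ref{compact}.
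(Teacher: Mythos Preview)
Your proof is correct and follows essentially the same approach as the paper. The only cosmetic differences are that for irreducibility you use the commutant formulation while the paper uses the equivalent reducing-subspace formulation, and for the dense spanning set in $\cN_{f,J}$ you use $f_\alpha(B)\cdot 1 = P_{\cN_{f,J}} f_\alpha$ while the paper uses $B_\alpha\cdot 1 = P_{\cN_{f,J}} Z_\alpha$; both rest on the same key identification of $Q_1=I-\sum_i f_i(B)f_i(B)^*$ with $P_\CC^{\cN_{f,J}}$ and the same rank-one computation $r(B)\,Q_1\,q(B)^*$ for the compact-operator part.
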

\begin{proof}
To prove the first part of the theorem, let $\cM\subseteq \cN_{f,J}$
be a nonzero subspace which is jointly reducing for $B_1,\ldots,
B_n$, and let $y=\sum_{\alpha\in \FF_n^+} a_\alpha f_\alpha$ be a
nonzero power series in  $\cM$. Then there is $\beta\in \FF_n^+$
such that $a_\beta\neq 0$. Since $f=(f_1,\ldots, f_n)$ is  an
$n$-tuple of formal power series
 with the model
 property, we have $M_{f_i}=f_i(M_{Z})$,
where  $M_Z:=(M_{Z_1},\ldots, M_{Z_n})$ is either in the convergence
set $\cC_f^{SOT}(\HH^2(f))$ or $\cC_f^{rad}(\HH^2(f))$.
Consequently,  since $1\in \cN_{f,J}$, we obtain
$$
a_\beta=P_\CC^{\cN_{f,J}}
[f(B)]_\beta^*y=\left(I_{\cN_{f,J}}-\sum_{i=1}^n
f_i(B)f_i(B)^*\right)[f(B)]_\beta^* y,
$$
where $B=(B_1,\ldots,B_n)$. Taking into account that $\cM$ is
reducing for $B_1,\ldots, B_n$ and $a_\beta\neq 0$, we deduce that
$1\in \cM$. Using again that $\cM$ is invariant under  $B_1,\ldots,
B_n$, we obtain $P_{\cN_{f,J}}\CC[Z_1,\ldots, Z_n]\subseteq \cM$.
Since $\CC[Z_1,\ldots, Z_n]$ is dense in $\HH^2(f)$, we conclude
that $\cM=\cN_{f,J}$, which shows that $C^*(B_1,\ldots,
 B_n)$  is irreducible.

Now, we prove the second part of the theorem. Since   $\cN_{f,J}$ is
an invariant subspace under each operator $M_{Z_i}^*$, \
$i=1,\ldots,n$, and $(M_{Z_1},\ldots,
 M_{Z_n})$
 is in  the set  of  norm-convergence (or radial norm-convergence)  for the $n$-tuple
 $f$, the operator
$f_i(B)$ is in
 $ \overline{\text{\rm span}} \{B_\alpha B_\beta^*:\
\alpha,\beta\in \FF_n^+\}$.  Taking into account that
$f=(f_1,\ldots, f_n)$ is   an  $n$-tuple of formal power series
 with the model
 property,  we have $M_{f_i}=f_i(M_{Z_1},\ldots, M_{Z_n})$. On the other hand, since $1\in \cN_{f,J}$ the
orthogonal projection of $\cN_{f,J}$ onto the constant power series
satisfies the equation
$$P_\CC^{\cN_{f,J}}=I_{\cN_{f,J}}-\sum_{i=1}^n f_i(B)f_i(B)^*.
 $$
 Therefore,  $P_\CC^{\cN_{f,J}}$ is also in  $ \overline{\text{\rm span}} \{B_\alpha B_\beta^*:\
\alpha,\beta\in \FF_n^+\}$. Let
$q(B):=\sum_{|\alpha|\leq m}a_\alpha [f(B)]_\alpha$ and let
 $\xi:=\sum_{\beta\in \FF_n^+} b_\beta f_\beta\in {\cN_{f,J}}$. Note
 \begin{equation*}
 \begin{split}
 P_\CC^{\cN_{f,J}} q(B)^*\xi&=P_\CC \sum_{|\alpha|\leq m}\overline{a}_\alpha M_{f_\alpha}^*\xi
 =\sum_{|\alpha|\leq m} \overline{a}_\alpha b_\alpha\\
 &=\left<\xi, \sum_{|\alpha|\leq m} a_\alpha f_\alpha\right>
 =\left<\xi, q(B)1\right>.
 \end{split}
 \end{equation*}
Consequently, if $r(B):=\sum_{|\gamma|\leq p}c_\gamma [f(B)]_\gamma$, then
\begin{equation}
\label{rP2} r(B)P_\CC q(B)^*\xi=\left<\xi,q(B)1\right>r(B)1,
\end{equation}
which shows that  $r(B)P_\CC^{\cN_{f,J}} q(B)^*$ is a rank one
operator acting on  $\cN_{f,J}$. Since the set of all vectors of the form $
\sum_{|\alpha|\leq m} a_\alpha[f(B)]_\alpha 1$, where $\ m\in \NN$,
$a_\alpha\in \CC$, is dense in $\cN_{f,J}$, and using relation
\eqref{rP2}, we deduce that all compact operators in $B(\cN_{f,J})$
are in $
 \overline{\text{\rm span}} \{B_\alpha B_\beta^*:\ \alpha,\beta\in
 \FF_n^+\}.
 $
This completes the proof.
\end{proof}

\begin{proposition}\label{eq-mult}  Under the hypotheses of
Theorem \ref{compact2}, if $\cH$, $\cK$ are Hilbert spaces, then the
$n$-tuples $(B_1\otimes I_\cH,\ldots, B_n\otimes I_\cH)$
 and $(B_1\otimes I_\cK,\ldots, B_n\otimes I_\cK)$are unitarily equivalent if and only if
  their multiplicities are equal, i.e., $\dim \cH=\dim \cK$.
\end{proposition}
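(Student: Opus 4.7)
The plan is straightforward multiplicity theory. The \emph{if} direction is trivial: given a unitary $V : \cH \to \cK$, the operator $I_{\cN_{f,J}} \otimes V$ intertwines the two $n$-tuples. So the whole content is the converse.

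For the converse, suppose $U : \cN_{f,J} \otimes \cH \to \cN_{f,J} \otimes \cK$ is a unitary satisfying $U(B_i \otimes I_\cH) = (B_i \otimes I_\cK) U$ for $i=1,\dots,n$. Taking polynomials and adjoints, and then norm limits, one gets $U(X \otimes I_\cH) = (X \otimes I_\cK) U$ for every $X \in C^*(B_1,\dots,B_n)$. The key is now to find a particular $X$ whose amplification has different ranks in the two representations, forcing $\dim\cH = \dim\cK$.

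The natural choice is the projection
\[
P := I_{\cN_{f,J}} - \sum_{i=1}^n f_i(B) f_i(B)^*.
\]
Since $f \in \cM^{||}$ and $\cN_{f,J}$ is co-invariant under $M_{Z_1},\dots,M_{Z_n}$, each $f_i(B)$ lies in $C^*(B_1,\dots,B_n)$ (this is exactly what was used in the proof of Theorem \ref{compact2}), so $P \in C^*(B_1,\dots,B_n)$. Because $1 \in \cN_{f,J}$, the same argument as in that theorem shows that $P = P_\CC^{\cN_{f,J}}$ is the rank-one orthogonal projection of $\cN_{f,J}$ onto the constants. Consequently $P \otimes I_\cH$ and $P \otimes I_\cK$ are projections of ranks $\dim\cH$ and $\dim\cK$ respectively, and the intertwining relation forces
\[
U (P \otimes I_\cH) U^* = P \otimes I_\cK.
\]
Unitarily equivalent projections have equal ranks, so $\dim\cH = \dim\cK$.

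There is no real obstacle; the only point requiring care is the verification that $P$ belongs to $C^*(B_1,\dots,B_n)$ and is of rank one, which is precisely where the two hypotheses of Theorem \ref{compact2} (namely $1 \in \cN_{f,J}$ and $f \in \cM^{||}$) enter. Once $P$ is identified as a rank-one projection inside the $C^*$-algebra, the rank argument applied to its amplifications closes the proof.
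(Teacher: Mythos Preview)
Your argument is correct but takes a different route from the paper's. The paper invokes the first conclusion of Theorem \ref{compact2}, irreducibility of $C^*(B_1,\ldots,B_n)$, and then uses the standard fact that an intertwiner between two amplifications of an irreducible representation must be an elementary tensor: $U=I_{\cN_{f,J}}\otimes W$ for some unitary $W\in B(\cH,\cK)$, whence $\dim\cH=\dim\cK$. You instead bypass irreducibility and use the second conclusion of Theorem \ref{compact2}: the existence of the rank-one projection $P_\CC^{\cN_{f,J}}$ inside $C^*(B_1,\ldots,B_n)$ lets you compare the ranks of its amplifications directly. Your approach is more elementary (no commutant/tensor-product machinery), while the paper's approach yields strictly more, namely the explicit form of $U$. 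Note also that your argument genuinely uses the hypothesis $f\in\cM^{||}$ to place $P$ in the $C^*$-algebra, whereas the paper's irreducibility argument only needs $1\in\cN_{f,J}$; since the proposition assumes all hypotheses of Theorem \ref{compact2}, this distinction is immaterial here.
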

\begin{proof} Let   $U:\cN_{f,J}\otimes \cH\to \cN_{f,J}\otimes \cK$ be a unitary  operator such that
$
U(B_i\otimes I_\cH)=(B_i\otimes I_{\cK})U$ for $ i=1,\ldots, n.
$
Since $U$ is unitary , we deduce that
$
U(B_i^*\otimes I_\cH)=(B_i^*\otimes I_{\cK})U$, $i=1,\ldots, n.
$
Since, according to Theorem \ref{compact}, the $C^*$-algebra $C^*(B_1,\ldots, B_n)$ is
 irreducible, we infer  that
$U=I_{\cN_{f,J}}\otimes W$ for some unitary operator $W\in
B(\cH,\cK)$. Therefore, $\dim \cH=\dim \cK$. The converse is clear.
 \end{proof}

\begin{theorem}\label{ccmap} Let $f=(f_1,\ldots, f_n)$ be an $n$-tuple of formal power
series  with the radial  approximation property,
  let $\cP\subset \CC[Z_1,\ldots,Z_n]$ be a set of homogeneous
  polynomials, and let $B=(B_1,\ldots, B_n)$ be the universal model
  associated with $f$ and     the WOT-closed two-sided ideal  $J_{\cP\circ f}$
   generated by $q(f(M_Z))$, $q\in\cP$, in the Hardy algebra
$H^\infty(\BB_f)$.
  If  the $n$-tuple  $T=(T_1,\ldots, T_n)\in \BB_f(\cH)$ has the property that
$$
(q\circ f)(T)=0,\qquad q\in\cP,
$$
then the linear map $\Psi_{f,T,\cP}:\overline{\text{\rm
span}}\{{B}_\alpha { B}_\beta:\ \alpha,\beta\in \FF_n^+\}\to B(\cH)
$ defined by
$$\Psi_{f,T,\cP}({ B}_\alpha { B}_\beta):=T_\alpha T_\beta^*,\qquad \alpha,\beta\in \FF_n^+,
$$
is completely contractive.
\end{theorem}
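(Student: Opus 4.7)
The plan is to realize $\Psi_{f,T,\cP}$ as a constrained noncommutative Poisson transform at $T$, using the radial approximation property to reduce to the pure case, where Corollary~\ref{vN-variety} applies directly.

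First, I would exploit the radial approximation property to place $T$ inside a \emph{pure} constrained variety built from a rescaled symbol. For each $r\in(\delta,1)$, the tuple $rf$ has the model property, and a direct formal-series calculation (using that the compositional inverse of $rf$ is $g^{(r)}(Y)=g(Y/r)$, together with $g\circ f=\mathrm{id}$) shows that every $T\in\BB_f(\cH)$ also belongs to $\BB_{rf}(\cH)$. Since $\|rf(T)\|\le r<1$, the limit condition for purity is automatic, so in fact $T\in\BB_{rf}^{pure}(\cH)$. Homogeneity of each $q\in\cP$ gives $(q\circ rf)(T)=r^{\deg q}(q\circ f)(T)=0$, so $T$ lies in the pure part $\cV_{rf,J^{(r)}}^{pure}(\cH)$, where $J^{(r)}:=J_{\cP\circ(rf)}\subset H^\infty(\BB_{rf})$.

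Second, I would invoke Corollary~\ref{vN-variety} (and the preceding Proposition~\ref{KJT}) in this rescaled setting. The constrained Poisson kernel $K_{rf,T,J^{(r)}}\colon\cH\to\cN_{rf,J^{(r)}}\otimes\cD_{rf,T}$ is then an isometry, yielding the factorization
\begin{equation*}
T_\alpha T_\beta^*=K_{rf,T,J^{(r)}}^*\bigl[\bigl(B^{(r)}_\alpha(B^{(r)}_\beta)^*\bigr)\otimes I_{\cD_{rf,T}}\bigr]K_{rf,T,J^{(r)}},
\end{equation*}
where $B^{(r)}=(B^{(r)}_1,\ldots,B^{(r)}_n)$ is the universal model for $\cV_{rf,J^{(r)}}$. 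Being a compression by an isometry of a $*$-representation, the assignment $B^{(r)}_\alpha(B^{(r)}_\beta)^*\mapsto T_\alpha T_\beta^*$ is completely contractive on $\overline{\text{\rm span}}\{B^{(r)}_\alpha(B^{(r)}_\beta)^*\}$.

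Third, I would pass to the limit $r\to1$ to transfer the estimate from $B^{(r)}$ to the universal model $B$ of $\cV_{f,J_{\cP\circ f}}$. The natural unitary $U_r\colon\HH^2(rf)\to\HH^2(f)$ sending $(rf)_\alpha\mapsto f_\alpha$ induces identifications under which the constrained spaces $\cN_{rf,J^{(r)}}$ correspond to $\cN_{f,J_{\cP\circ f}}$ (the homogeneity of the generators of $\cP$ matches the ideals up to a grade-by-grade scaling by powers of $r$), and the universal models $B^{(r)}_i$ converge to $B_i$ on each homogeneous component as $r\to1$. Taking the limit in the completely contractive estimate, applied at every matrix level, then yields the desired conclusion for $\Psi_{f,T,\cP}$. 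The main obstacle is precisely this last limiting step: one must ensure that the Poisson factorizations at level $r$ are uniformly bounded and that the identifications between the scaled and unscaled objects are compatible with the operator-space structure finely enough for complete contractivity to survive the passage $r\to1$.
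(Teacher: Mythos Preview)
Your strategy is the paper's: rescale to land in the pure variety $\cV_{rf,J_{\cP\circ rf}}^{pure}(\cH)$, apply Corollary~\ref{vN-variety} there, and then let $r\to 1$. The first two steps are fine and match the paper almost verbatim, including the homogeneity argument for $(q\circ rf)(T)=0$.

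The genuine gap is exactly where you flag it, and your proposed fix is not strong enough. ``Convergence on each homogeneous component'' (a form of SOT convergence) does not let you pass a norm inequality of the type
\[
\Bigl\|\bigl[\textstyle\sum a^{(ij)}_{\alpha\beta}T_\alpha T_\beta^*\bigr]_{k\times k}\Bigr\|
\le
\Bigl\|\bigl[\textstyle\sum a^{(ij)}_{\alpha\beta}B^{(r)}_\alpha (B^{(r)}_\beta)^*\bigr]_{k\times k}\Bigr\|
\]
to the limit; the right-hand side need not be lower semicontinuous under SOT limits. What is needed is \emph{norm} convergence of $B^{(r)}_i$ (after a fixed identification) to $B_i$.

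The paper obtains this as follows. Instead of using your unitary $U_r:\HH^2(rf)\to\HH^2(f)$, it passes all the way to the Fock space via $U^{(r)}:\HH^2(rf)\to F^2(H_n)$. The point of this detour is that on the Fock space the constraint subspace $\cN_{J_\cP}$ is literally the \emph{same} subspace for every $r\in(\delta,1]$, and under $U^{(r)}$ one has
\[
B_i^{(r)}\ \cong\ P_{\cN_{J_\cP}}\,\varphi_i\!\left(\tfrac{1}{r}S_1,\ldots,\tfrac{1}{r}S_n\right)\Big|_{\cN_{J_\cP}},
\]
where $\varphi_i=g_i$ is the inverse power series. Now the radial approximation property is used a second time, not merely to say $rf\in\cM$, but to conclude that each $g_i$ is free holomorphic on $[B(\cH)^n]_\gamma$ for some $\gamma>1$. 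This makes $r\mapsto \varphi_i(\tfrac{1}{r}S)$ continuous in the operator \emph{norm} on $(\delta,1]$, so $B_i^{(r)}\to B_i$ in norm (after the fixed identification), and the finite combinations $\sum a^{(ij)}_{\alpha\beta}B^{(r)}_\alpha(B^{(r)}_\beta)^*$ converge in norm as well. That is what justifies the limit in the completely contractive estimate. Your outline becomes a complete proof once you replace ``homogeneous component'' convergence by this norm-continuity argument.
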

\begin{proof}
Let $g=(g_1,\ldots,g_n)$ be the inverse of $f=(f_1,\ldots, f_n)$
with respect to the composition, and assume that
$g_i:=\sum_{\alpha\in\FF_n^+} a_\alpha^{(i)} Z_\alpha$,
$i=1,\ldots,n$. Since $f$ has the model property, the left
multiplication $M_{Z_i}:\HH^2(f)\to \HH^2(f)$  defined by
   $$
   M_{Z_i}\psi:=Z_i\psi, \qquad \psi\in \HH^2(f),
  $$
  is a bounded left multiplier of $\HH^2(f)$ and
$M_{Z_i}=U^{-1} \varphi_i(S_1,\ldots,S_n) U$, where
   $\varphi_i(S_1,\ldots,S_n)$ is   in  the noncommutative Hardy algebra $F_n^\infty$ and has the
    Fourier
    representation  $\sum_{\alpha\in \FF_n^+} a_\alpha^{(i)}
    S_\alpha$,
    and
     $U:\HH^2(f)\to F^2(H_n)$ is the unitary operator defined by
$U(f_\alpha):=e_\alpha$, $\alpha\in \FF_n^+$.

Since $f$ has the radial approximation property, there is $\delta\in
(0,1)$ such that $rf:=(rf_1,\ldots, rf_n)$ has the model property
for any $r\in (\delta, 1]$.  We remark the the Hilbert space
$\HH^2(rf)$ is in fact $\HH^2(f)$ with the inner product defined by
$\left<f_\alpha,
f_\beta\right>_{\HH^2(rf)}:=\frac{1}{r^{|\alpha|+|\beta|}}
\delta_{\alpha\beta}$, $\alpha,\beta\in \FF_n^+$. Denote
$(g_i)_{1/r}:=\sum_{\alpha \in \FF_n^+} a_\alpha^{(i)}
\frac{1}{r^{|\alpha|}} Z_\alpha$ and note that $Z_i=(g_i)_{1/r}\circ
(rf)$ for  $i=1,\ldots,n$. Since $rf$ has the model  property for $r\in
(\delta, 1]$,  we deduce that the multiplication
$M_{Z_i}^{(r)}:\HH^2(rf)\to \HH^2(rf)$  defined by
   $$
   M_{Z_i}^{(r)}\psi:=Z_i\psi, \qquad \psi\in \HH^2(rf),
  $$
  is a bounded left multiplier of $\HH^2(rf)$ and
$M_{Z_i}^{(r)}=(U^{(r)})^{-1}
\varphi_i(\frac{1}{r}S_1,\ldots,\frac{1}{r}S_n) U^{(r)}$, where
   $\varphi_i(\frac{1}{r}S_1,\ldots, \frac{1}{r}S_n)$ is
    in  the noncommutative Hardy algebra $F_n^\infty$ and has the
    Fourier
    representation  $\sum_{\alpha\in \FF_n^+} \frac{1}{r^{|\alpha|}}a_\alpha^{(i)}
    S_\alpha$,
    and
     $U^{(r)}:\HH^2(rf)\to F^2(H_n)$ is the unitary operator defined by
$U^{(r)}(f_\alpha):=\frac{1}{r^{|\alpha|}}e_\alpha$, $\alpha\in
\FF_n^+$.

For each $r\in (\delta, 1]$, let $J_{\cP\circ rf}$ be the WOT-closed
two-sided ideal of $H^\infty(\BB_{rf})$ generated by the operators
$q(rf(M_Z^{(r)}))$, $q\in \cP$. We introduce the subspace
$\cN_{rf,J_{\cP\circ rf}}:=\HH^2(rf)\ominus \cM_{rf,J_{\cP\circ
rf}}$, where $\cM_{rf,J_{\cP\circ rf}}=\overline{J_{\cP\circ rf}
\HH^2(rf)}$, and the operators  $B_i^{(r)}:=P_{\cN_{rf,J_{\cP\circ
rf}}} M_{Z_i}^{(r)}|_{\cN_{rf,J_{\cP\circ rf}}}$, $i=1,\ldots,n$. We
denote by $J_\cP$ the  WOT-closed two-sided ideal of $F_n^\infty$
generated by the operators $q(S_1,\ldots,S_n)$, $q\in \cP$. We also
introduce the subspace $\cN_{J_\cP}=F^2(H_n)\ominus \cM_{J_\cP}$,
where $\cM_{J_\cP}:=\overline{J F^2(H_n)}$.

Our next step is to show that $\psi=\sum_{\alpha\in \FF_n^+}
c_\alpha e_\alpha$ is in $\cN_{J_\cP}$ if and only $(U^{(r)})^{-1}
\psi=\sum_{\alpha\in \FF_n^+} c_\alpha r^{|\alpha|} f_\alpha$ is in
$\cN_{rf,J_{\cP\circ rf}}$. First, note that
$$
J_{\cP\circ rf}=J_\cP\circ rf:=\{\chi(rf(M_Z)):\ \chi\in J_\cP\}.
$$
Due to the definition of $\cN_\cP$, one can see that $\varphi\in
\cN_\cP$ if and only if $\left< \psi,
\chi(S_1,\ldots,S_n)1\right>_{F^2(H_n)}=0$ for any
$\chi(S_1,\ldots,S_n)=\sum_{\alpha\in \FF_n^+} a_\alpha S_\alpha\in
J_\cP$, which is equivalent to $\sum_{\alpha\in \FF_n^+}c_\alpha
\bar{a}_\alpha=0$. On the other hand, note that $\sum_{\alpha\in
\FF_n^+}c_\alpha r^{|\alpha|} f_\alpha$ is in $\cN_{rf,J_{\cP\circ
rf}}$ if and only if
$$
\left<\sum_{\alpha\in \FF_n^+}c_\alpha r^{|\alpha|} f_\alpha,
\chi(rf(M_Z))1\right>_{\HH^2(rf)}=0,\qquad  \chi\in J_\cP.
$$
Since $f$ has the model property, $M_{f_i}=f_i(M_Z)$ and,
consequently,   the relation above  is equivalent to
 $
\sum_{\alpha\in \FF_n^+}c_\alpha \bar{a}_\alpha=0
 $
 for any $\sum_{\alpha\in \FF_n^+} a_\alpha S_\alpha\in
J_\cP$,
 which proves our assertion.
 Now, it is easy to see that
 $$
 U^{(r)}(\cM_{rf,J_{\cP\circ rf}})=\cM_{J_\cP}\quad \text{\rm  and
 }\quad U^{(r)}(\cN_{rf,J_{\cP\circ rf}})=\cN_{J_\cP}.
 $$
Since $M_{Z_i}^{(r)}=(U^{(r)})^{-1}
\varphi_i(\frac{1}{r}S_1,\ldots,\frac{1}{r}S_n) U^{(r)}$,
$i=1,\ldots,n$, we deduce that
\begin{equation*}
\begin{split}
B_i^{(r)}&:=P_{\cN_{rf,J_{\cP\circ rf}}}
M_{Z_i}^{(r)}|_{\cN_{rf,J_{\cP\circ rf}}}\\
&= P_{\cN_{rf,J_{\cP\circ rf}}} (U^{(r)})^{-1} \left(
P_{\cN_{J_\cP}^\perp}+P_{\cN_{J_\cP}}\right)
\varphi_i\left(\frac{1}{r}S_1,\ldots,\frac{1}{r}S_n\right)
|_{\cN_{J_\cP}} \left(U^{(r)}|_{\cN_{rf,J_{\cP\circ rf}}}\right)\\
&=   (U^{(r)})^{-1} P_{\cN_{J_\cP}}
\varphi_i\left(\frac{1}{r}S_1,\ldots,\frac{1}{r}S_n\right)
|_{\cN_{J_\cP}} \left(U^{(r)}|_{\cN_{rf,J_{\cP\circ rf}}}\right)\\
&=   \left(U^{(r)}|_{\cN_{rf,J_{\cP\circ rf}}}\right)^{-1}
P_{\cN_{J_\cP}}
\varphi_i\left(\frac{1}{r}S_1,\ldots,\frac{1}{r}S_n\right)
|_{\cN_{J_\cP}} \left(U^{(r)}|_{\cN_{rf,J_{\cP\circ rf}}}\right),
\end{split}
\end{equation*}
where $U^{(r)}|_{\cN_{rf,J_{\cP\circ rf}}}: \cN_{rf,J_{\cP\circ
rf}}\to \cN_{J_\cP}$ is a unitary operator for each $r\in (\delta,
1]$.

Now, we assume that $T=(T_1,\ldots, T_n)\in \BB_f(\cH)$ has the
property that $ (q_j\circ f)(T)=0$ for  $j=1,\ldots,d$, and $0<r<1$.
Since the $H^\infty(\BB_f)$ functional calculus for the $n$-tuples
of operators in $\BB_f^{cnc}(\cH)$ is a homomorphism and $q\in\cP$ is a
homogenous
  polynomials, we have
  \begin{equation}
  \label{omo}
  (\varphi q(S_1,\ldots, S_n)) (rf_1(T),\ldots, rf_n(T))=r^{\text{deg}\,(q)}
   \varphi(rf_1(T),\ldots, rf_n(T))q(f_1(T),\ldots, f_n(T))=0
  \end{equation}
 for any $\varphi\in F_n^\infty$ and $q\in\cP$, where  deg\,$(q)$ denotes
 the degree
 of the polynomial $q$.
 On the other hand,   $J_{\cP\circ rf}$ is the WOT-closed
two-sided ideal of $H^\infty(\BB_{rf})$ generated by the operators
$q(rf(M_Z))$, $q\in \cP$, for each $r\in (\delta, 1]$. Since  the
$H^\infty(\BB_{rf})$-functional calculus
      for pure $n$-tuples  in $\BB_{rf}(\cH)$ is WOT-continuous, and
      $(T_1,\ldots, T_n)\in \BB_{rf}^{pure}(\cH)$,  relation
      \eqref{omo} implies $\psi(T_1,\ldots, T_n)=0$ for any
       $\psi\in J_{\cP\circ rf}$ and $r\in (\delta, 1)$. Therefore,
       $(T_1,\ldots,T_n)$ is in the noncommutative variety $\cV^{pure}_{rf,J_{\cP\circ
       rf}}(\cH)$.
       Applying Corollary \ref{vN-variety} to the $n$-tuple
       $(T_1,\ldots,T_n)$, we  deduce that there is  completely contractive linear map
       $\Phi:\overline{\text{\rm span}}\left\{
       B_\alpha^{(r)}{B_\beta^{(r)}}^*:\ \alpha,\beta\in
       \FF_n^+\right\} \to B(\cH)$
       uniquely defined by
       $\Phi\left(B_\alpha^{(r)}{B_\beta^{(r)}}^*\right):=T_\alpha
       T_\beta^*$ for all  $\alpha,\beta\in \FF_n^+$. Hence, and
       using
the fact that the $n$-tuple  $(B_1^{(r)}, \ldots,B_n^{(r)}) $ is
unitarily equivalent to $$\left(P_{\cN_{J_\cP}}
\varphi_1\left(\frac{1}{r}S_1,\ldots,\frac{1}{r}S_n\right)
|_{\cN_{J_\cP}}, \ldots, P_{\cN_{J_\cP}}
\varphi_n\left(\frac{1}{r}S_1,\ldots,\frac{1}{r}S_n\right)
|_{\cN_{J_\cP}}\right), $$  and $\cN_{J_\cP}$ is invariant under
$S_1^*,\ldots, S_n^*$, we obtain

\begin{equation*}
\begin{split}
\left\| \left[\sum_{|\alpha|,|\beta|\leq m} a_{\alpha
\beta}^{(ij)}T_\alpha T_\beta^*\right]_{k\times k} \right\| &\leq
\left\| \left[\sum_{|\alpha|,|\beta|\leq m}
a_{\alpha\beta}^{(ij)}B^{(r)}_\alpha
{B^{(r)}_\beta}^*\right]_{k\times k} \right\|\\
&\leq \left\| \left[\sum_{|\alpha|,|\beta|\leq m} a_{\alpha
\beta}^{(ij)}P_{\cN_{J_\cP}}
\varphi_\alpha\left(\frac{1}{r}S_1,\ldots,\frac{1}{r}S_n\right)
\varphi_\beta\left(\frac{1}{r}S_1,\ldots,\frac{1}{r}S_n\right)^*
|_{\cN_{J_\cP}}\right]_{k\times k} \right\|
\end{split}
\end{equation*}
for any $m,k\in \NN$, $a^{(ij)}_{\alpha\beta}\in \CC$, and $i,j\in
\{1,\ldots,k\}$. Since $f=(f_1,\ldots, f_n)$ has the radial
property,  $\varphi_1,\ldots, \varphi_n$ are free holomorphic
functions on a ball $[B(\cH)^n]_\gamma$ with $\gamma>1$.
Consequently, the map $(\delta, 1]\ni r\mapsto
\varphi_i\left(\frac{1}{r}S_1,\ldots,\frac{1}{r}S_n\right)\in
B(F^2(H_n))$ is continuous in the operator norm topology. Passing to
the limit as $r\to 1$ in the inequality above, we obtain
\begin{equation}
\label{ine-phi} \left\| \left[\sum_{|\alpha|,|\beta|\leq m}
a_{\alpha \beta}^{(ij)}T_\alpha T_\beta^*\right]_{k\times k}
\right\| \leq \left\| \left[\sum_{|\alpha|,|\beta|\leq m} a_{\alpha
\beta}^{(ij)}P_{\cN_{J_\cP}} \varphi_\alpha\left( S_1,\ldots,
S_n\right) \varphi_\beta\left( S_1,\ldots, S_n\right)^*
|_{\cN_{J_\cP}}\right]_{k\times k} \right\|.
\end{equation}
Hence, using  the fact (proved above)  that
$$
B_i:=P_{\cN_{f,J_{\cP\circ f}}} M_{Z_i} |_{\cN_{f,J_{\cP\circ f}}} =
\left(U^{(1)}|_{\cN_{f,J_{\cP\circ f}}}\right)^{-1} P_{\cN_{J_\cP}}
\varphi_i\left( S_1,\ldots, S_n\right) |_{\cN_{J_\cP}}
\left(U^{(1)}|_{\cN_{f,J_{\cP\circ f}}}\right)
$$
for each $i=1,\ldots,n$, we obtain
$$
\left\| \left[\sum_{|\alpha|,|\beta|\leq m} a_{\alpha
\beta}^{(ij)}T_\alpha T_\beta^*\right]_{k\times k} \right\| \leq
\left\| \left[\sum_{|\alpha|,|\beta|\leq m}
a_{\alpha\beta}^{(ij)}B_\alpha {B _\beta}^*\right]_{k\times k}
\right\|,
$$
which completes the proof.
\end{proof}

 Let $C^*(\cY)$ be the $C^*$-algebra generated by a set of
  operators
$\cY\subset B(\cK)$ and the identity. A  subspace $\cH\subseteq \cK$
is called $*$-cyclic  for $\cY$    if
$$\cK=\overline{\text{\rm span}}\left\{Xh:\ X\in C^*(\cS), \ h\in
\cH\right\}.
$$

\begin{theorem}\label{dil2} Let $f=(f_1,\ldots, f_n)$ be an $n$-tuple of power series
 in the set $\cM_{rad}\cap \cM^{||}$,
 let $\cP\subset \CC[Z_1,\ldots,Z_n]$ be  a set of homogeneous
 polynomials, and   let $B=(B_1,\ldots, B_n)$ be the universal model
  associated with $f$ and     the WOT-closed two-sided ideal  $J_{\cP\circ f}$
    in
$H^\infty(\BB_f)$.
  If  $\cH$ is a separable Hilbert space and $T=(T_1,\ldots, T_n)\in \BB_f(\cH)$ has the property that
$$
(q\circ f)(T)=0,\qquad q\in\cP,
$$
then there exists a separable Hilbert space $\cK_\pi$ and a
$*$-representation $\pi:C^*(B_1,\ldots, B_n)\to B(\cK_\pi)$ which
annihilates the compact operators and
$$
\sum_{i=1}^n f_i(\pi(B_1),\ldots, \pi(B_n))f_i(\pi(B_1),\ldots,
\pi(B_n))^*= I_{ \cK_\pi},
$$
such that
\begin{enumerate}
\item[(i)]
$\cH$ can be identified with a $*$-cyclic co-invariant subspace of
$\tilde\cK:=(\cN_{f,J_{\cP\circ f}}\otimes
\overline{\Delta_{f,T}\cH})\oplus \cK_\pi$ under the operators
$$
V_i:=\left[\begin{matrix} B_i\otimes
I_{\overline{\Delta_{f,T}\cH}}&0\\0&\pi(B_i)
\end{matrix}\right],\quad i=1,\ldots,n;
$$
\item[(ii)]
$ T_i^*=V_i^*|\cH,\quad i=1,\ldots, n. $
\item[(iii)] $V:=(V_1,\ldots,V_n)\in \BB_f(\widetilde \cK)$ and
$$
(q\circ f)(V)=0,\qquad q\in\cP.
$$
\end{enumerate}
  \end{theorem}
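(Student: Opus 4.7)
The strategy is to combine the completely contractive compression map supplied by Theorem \ref{ccmap} with Stinespring's dilation theorem and a Wold-type decomposition for $C^*(B_1,\ldots,B_n)$ analogous to the one established in Theorem \ref{wold}.

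First I would invoke Theorem \ref{ccmap} to obtain the unital completely contractive map
$$\Psi_{f,T,\cP}:\overline{\mathrm{span}}\{B_\alpha B_\beta^*:\alpha,\beta\in\FF_n^+\}\to B(\cH),\qquad B_\alpha B_\beta^*\mapsto T_\alpha T_\beta^*.$$
Since this self-adjoint operator system generates $C^*(B_1,\ldots,B_n)$ and $\cH$ is separable, Arveson's extension theorem followed by the minimal Stinespring construction yields a separable Hilbert space $\widetilde\cK\supseteq\cH$, a $*$-representation $\widetilde\pi:C^*(B_1,\ldots,B_n)\to B(\widetilde\cK)$, and an isometric embedding under which $\cH$ is $*$-cyclic for $\widetilde\pi$ and satisfies $\Psi_{f,T,\cP}(X)=P_\cH\widetilde\pi(X)|_\cH$. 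Setting $V_i:=\widetilde\pi(B_i)$, the intertwining relation $T_i^*=V_i^*|_\cH$ required by (ii) follows by testing $\Psi_{f,T,\cP}$ against the generators $B_\alpha B_\beta^*$, and $\cH$ is automatically co-invariant under each $V_i$.

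Next I would decompose $\widetilde\pi$ using Theorem \ref{compact2} and Proposition \ref{eq-mult}. Because $C^*(B_1,\ldots,B_n)$ is irreducible and contains all compact operators on $\cN_{f,J_{\cP\circ f}}$, standard $C^*$-representation theory splits
$$\widetilde\cK=\cK_0\oplus\cK_\pi,\qquad \widetilde\pi=\widetilde\pi_0\oplus\pi,$$
where $\cK_0$ is the closure of $\widetilde\pi(\text{compacts})\widetilde\cK$ and $\pi$ annihilates all compact operators, and $\widetilde\pi_0$ is unitarily equivalent to $X\mapsto X\otimes I_\cE$ on $\cN_{f,J_{\cP\circ f}}\otimes\cE$ for some Hilbert space $\cE$. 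To compute $\dim\cE$, observe that $P_\CC^{\cN_{f,J}}=I-\sum_i f_i(B)f_i(B)^*$ is a rank-one projection in $C^*(B_1,\ldots,B_n)$ (as in the proof of Theorem \ref{compact2}), so $\widetilde\pi_0(P_\CC^{\cN_{f,J}})$ has rank exactly $\dim\cE$; on the other hand,
$$P_\cH\widetilde\pi\Bigl(I-\textstyle\sum_i f_i(B)f_i(B)^*\Bigr)|_\cH=I_\cH-\sum_i f_i(T)f_i(T)^*=\Delta_{f,T}^2,$$
and the $*$-cyclicity of $\cH$ together with the fact that $\pi$ kills $P_\CC^{\cN_{f,J}}$ forces $\dim\cE=\dim\overline{\Delta_{f,T}\cH}$, yielding the claimed form of $\widetilde\cK$ and of each $V_i$ in (i).

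Finally I would verify (iii). Since $\pi$ annihilates the compact operator $P_\CC^{\cN_{f,J}}$, the Cuntz-type identity $\sum_i f_i(\pi(B))f_i(\pi(B))^*=I_{\cK_\pi}$ follows once one knows $\pi(f_i(B))=f_i(\pi(B))$, which is granted by $f\in\cM^{||}$ because then $f_i(B)$ is an operator-norm limit of polynomials in $B$ that commutes with any $*$-representation. The same functional calculus gives $\widetilde\pi(q(f(B)))=q(f(V))$ for every $q\in\cP$, and since $q(f(M_Z))\in J_{\cP\circ f}$ forces $q(f(B))=P_{\cN_{f,J}}q(f(M_Z))|_{\cN_{f,J}}=0$ by definition of $\cN_{f,J_{\cP\circ f}}$, one concludes $(q\circ f)(V)=0$. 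Membership $V\in\BB_f(\widetilde\cK)$ assembles from $B\in\BB_f(\cN_{f,J_{\cP\circ f}})$ on the first summand and the Cuntz identity on $\cK_\pi$, using $f\in\cM_{rad}\cap\cM^{||}$. The main obstacle will be pinning down the multiplicity $\dim\cE=\dim\overline{\Delta_{f,T}\cH}$ in the Stinespring/Wold decomposition, which requires the minimality of the dilation to prevent the rank of $\widetilde\pi_0(P_\CC^{\cN_{f,J}})$ from being inflated, and verifying that the functional calculi for $f_i$ and for $q\circ f$ commute with $\widetilde\pi$ in norm — this is precisely where the hypotheses $f\in\cM^{||}$ and $\cP$ homogeneous (as exploited in the proof of Theorem \ref{ccmap}) become indispensable.
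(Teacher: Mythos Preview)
Your proposal is correct and follows essentially the same route as the paper: Arveson extension of $\Psi_{f,T,\cP}$, minimal Stinespring dilation, then a Wold-type split of $\widetilde\pi$ using the compact operators in $C^*(B_1,\ldots,B_n)$ from Theorem \ref{compact2}. The one place where the paper is more explicit than your sketch is the multiplicity computation you flag as the ``main obstacle'': there one uses minimality to write $\operatorname{range}\widetilde\pi(P_\CC^{\cN_{f,J}})=\overline{\operatorname{span}}\{\widetilde\pi(P_\CC^{\cN_{f,J}})\widetilde\pi(B_\beta^*)h:\beta\in\FF_n^+,\,h\in\cH\}$ and then checks directly, via the multiplicativity $\Gamma_{f,T,\cP}(B_\alpha X)=\Gamma_{f,T,\cP}(B_\alpha)\Gamma_{f,T,\cP}(X)$, that $\widetilde\pi(P_\CC^{\cN_{f,J}})\widetilde\pi(B_\alpha^*)h\mapsto\Delta_{f,T}T_\alpha^*h$ extends to a unitary onto $\overline{\Delta_{f,T}\cH}$.
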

\begin{proof}

 Applying Arveson extension theorem
to the map $\Psi_{f,T,\cP}$ of Theorem \ref{ccmap}, we obtain a
unital completely positive linear map
$\Gamma_{f,T,\cP}:C^*(B_1,\ldots, B_n)\to B(\cH)$ such that
$\Gamma_{f,T,\cP}({ B}_\alpha { B}_\beta):=T_\alpha T_\beta^*$  for
$\alpha,\beta\in \FF_n^+$. Consider  $\tilde\pi:C^*(B_1,\ldots,
B_n)\to B(\tilde\cK)$  to be a minimal Stinespring dilation of
$\Gamma_{f,T,\cP}$, i.e.,
$$\Gamma_{f,T,\cP}(X)=P_{\cH} \tilde\pi(X)|_\cH,\qquad X\in C^*(B_1,\ldots, B_n),
$$
and $\tilde\cK=\overline{\text{\rm span}}\{\tilde\pi(X)h:\  X\in
C^*(B_1,\ldots, B_n), h\in \cH\}.$ It is easy to see  that, for each
$i=1,\ldots, n$,
\begin{equation*}
\begin{split}
\Gamma_{f,T,\cP}(B_i B_i^*)&=P_\cH\tilde\pi(B_i)\tilde\pi(B_i^*)|_\cH\\
&=P_\cH \tilde\pi(B_i)(P_\cH+P_{\cH^\perp})\tilde\pi(B_i^*)|_\cH\\
&=\Gamma_{f,T,\cP}(B_i B_i^*)+ (P_\cH
\tilde\pi(B_i)|_{\cH^\perp})(P_{\cH^\perp} \tilde\pi(B_i^*)|_\cH).
\end{split}
\end{equation*}
Consequently, we have $P_\cH \tilde\pi(B_i)|_{\cH^\perp}=0$ and
\begin{equation}\label{morph}
\begin{split}
\Gamma_{f,T,\cP}(B_\alpha X)&=P_\cH(\tilde\pi(B_\alpha) \tilde\pi(X))|_\cH\\
&=(P_\cH\tilde\pi(B_\alpha)|_\cH)(P_\cH\tilde\pi(X)
|_\cH)\\
&=\Gamma_{f,T,\cP}(B_\alpha) \Gamma_{f,T,\cP}(X)\end{split}
\end{equation}
for any $X\in C^*(B_1,\ldots, B_n)$ and $\alpha\in \FF_n^+$. Note
that the Hilbert space $\tilde\cK$ is separable, since $\cH$ has the
same property. Relation  $P_\cH \tilde\pi(B_i)|_{\cH^\perp}=0$ shows
that $\cH$ is an invariant subspace under each $\tilde\pi(B_i)^*$, \
$i=1,\ldots, n$. Therefore,
\begin{equation}\label{coiso}
\tilde\pi(B_i)^*|_\cH=\Gamma_{f,T,\cP}(B_i^*)=T_i^*,\qquad
i=1,\ldots, n.
\end{equation}

Taking into account that  the subspace $\cN_{f,J_{\cP\circ f}}$
contains the constants, we use   Theorem \ref{compact2} to conclude
that all the compact operators   in $B(\cN_{f,J_{\cP\circ f}})$ are
contained in $C^*(B_1,\ldots, B_n)$. We remark that one can obtain a
version of Theorem \ref{wold} in our new setting, in a similar
manner. Consequently, the representation $\tilde\pi$ decomposes into
a direct sum $\tilde\pi=\pi_0\oplus \pi$ on $\tilde \cK=\cK_0\oplus
\cK_\pi$, where $\pi_0$, $\pi$  are disjoint representations of
$C^*(B_1,\ldots, B_n)$ on the Hilbert spaces $\cK_0$ and $\cK_\pi$,
respectively, such that
\begin{equation}\label{sime}
\cK_0\simeq\cN_{f,J_{\cP\circ f}}\otimes \cG, \quad
\pi_0(X)=X\otimes I_\cG, \quad X\in C^*(B_1,\ldots, B_n),
\end{equation}
 for some Hilbert space $\cG$, and $\pi$ is a representation which
annihilates the compact operators.  Since $P_\CC^{\cN_{f,J_{\cP\circ
f}}}=I_{\cN_{f,J_\cP}}-\sum\limits_{i=1}^n f_i(B)f_i(B)^*$ is a rank
one projection in  the $C^*$-algebra $C^*(B_1,\ldots, B_n)$, we have
$\sum\limits_{i=1}^n f_i(\pi(B_1),\ldots,
\pi(B_n))f_i(\pi(B_1),\ldots, \pi(B_n))^*=I_{\cK_\pi}$ and $ \dim
\cG=\dim (\text{\rm range}\,\tilde\pi(P_\CC^{\cN_{f,J_{\cP\circ
f}}})).$ Using the minimality of the Stinespring representation
$\tilde\pi$,
 the proof of Theorem \ref{compact2},
 and the fact that $P_\CC^{\cN_{f,J_{\cP\circ f}}}B_\alpha=0$ if $|\alpha|\geq 1$,  we
 deduce that
\begin{equation*}\begin{split}
\text{\rm range}\,\tilde\pi(P_\CC^{\cN_{f,J_{\cP\circ f}}})&=
\overline{\text{\rm span}}\{\tilde\pi(P_\CC^{\cN_{f,J_{\cP\circ
f}}})\tilde\pi(X)h:\ X\in C^*(B_1,\ldots, B_n),
 h\in \cH\}\\
&= \overline{\text{\rm span}}\{\tilde\pi(P_\CC^{\cN_{f,J_{\cP\circ
f}}})\tilde\pi(Y)h:\ Y \text{ is compact in }
 B(\cN_{f,J_{\cP\circ f}}), \,h\in \cH\}\\
&= \overline{\text{\rm span}}\{\tilde\pi(P_\CC^{\cN_{f,J_{\cP\circ
f}}})\tilde\pi(B_\alpha P_\CC^{\cN_{f,J_{\cP\circ f}}}
B_\beta^*)h:\ \alpha,\beta\in \FF_n^+, h\in \cH\}\\
&= \overline{\text{\rm span}}\{\tilde\pi(P_\CC^{\cN_{f,J_{\cP\circ
f}}})\tilde\pi(B_\beta^*)h:\ \beta\in \FF_n^+, h\in \cH\}.
\end{split}
\end{equation*}
Now, due to  relation \eqref{morph}, we have
\begin{equation*}\begin{split}
\left<\tilde\pi(P_\CC^{\cN_{f,J_{\cP\circ
f}}})\tilde\pi(B_\alpha^*)h,
\tilde\pi(P_\CC^{\cN_{f,J_{\cP\circ
f}}})\tilde\pi(B_\beta^*)k\right>
&=
\left<h,\pi(B_\alpha)\pi(P_\CC^{\cN_{f,J_{\cP\circ f}}})\pi(B_\beta^*)h\right>\\
&=
\left<h,T_\alpha\left(I_\cH-\sum_{i=1}^n f_i(T)f_i(T)^*\right)T_\beta^*h\right>\\
&= \left<\Delta_{f,T}T_\alpha^*h,\Delta_{f,T}T_\beta^*k\right>
\end{split}
\end{equation*}
for any $h, k \in \cH$ and $\alpha,\beta\in \FF_n^+$. Consequently,
the map $\Lambda:\text{\rm
range}\,\tilde\pi(P_\CC^{\cN_{f,J_{\cP\circ f}}})\to
\overline{\Delta_{f,T}\cH}$  defined by
$$
\Lambda(\tilde\pi(P_\CC^{\cN_{f,J_{\cP\circ
f}}})\tilde\pi(B_\alpha^*)h):=\Delta_{f,T} T_\alpha^*,\quad h\in
\cH,
$$
can  be extended  by linearity and continuity to a unitary operator.
Hence,
$$
\dim[\text{\rm range}\,\pi(P_\CC^{\cN_{f,J_{\cP\circ f}}})]= \dim
\overline{\Delta_{f,T}\cH}=\dim \cG.
$$
Under the appropriate identification of $\cG$ with
$\overline{\Delta_{f,T}\cH}$ and using  relations \eqref{coiso} and
\eqref{sime},
  we  obtain the  required dilation.

  To prove item (iii), note that since
  $B_i:=P_{\cN_{f,J_{\cP\circ f}}} M_{Z_i} |_{\cN_{f,J_{\cP\circ f}}}$,   $i=1,\ldots,
  n$, we have $(q\circ f)(B_1,\ldots,B_n)=0$, $q\in\cP$.
  Taking into account that $q\in\cP$ is a polynomial and $f\in
  \cM^{||}$, the latter equality implies
  $(q\circ f)(\pi(B_1),\ldots,\pi(B_n))=0$.  Therefore, $(q\circ f)(V_1,\ldots,V_n)=0$ for
   $q\in\cP$.
   On the other hand, since $\sum_{i=1}^n
   f_i(B_1,\ldots,B_n)f_i(B_1,\ldots,B_n)^*\leq I$ and $f\in
   \cM^{||}$, we also have  $$\sum_{i=1}^n
   f_i(\pi(B_1),\ldots,\pi(B_n))f_i(\pi(B_1),\ldots,\pi(B_n))^*\leq
   I,$$
which proves that $(\pi(B_1),\ldots,\pi(B_n))\in \BB_f(\cK_\pi)$.
Consequently, $(V_1,\ldots, V_n)\in \BB_f(\widetilde \cK)$. This
completes the proof.
\end{proof}

We remark that if, in addition to the hypotheses of Theorem
$\ref{dil2}$,
\begin{equation*}
\overline{\text{\rm span}}\,\{B_\alpha B_\beta^*:\ \alpha,\beta\in \FF_n^+\}=C^*(B_1,\ldots, B_n),
\end{equation*}
then the map $\Gamma_{f,T,\cP}$ is unique and  the  dilation  is
minimal, i.e., $\tilde\cK=\bigvee\limits_{\alpha\in \FF_n^+}
V_\alpha \cH$. In this case, the minimal dilation of Theorem
\ref{dil2} is unique, due to the uniqueness  of the minimal
Stinespring representation.

\begin{corollary}Let $V:=(V_1,\ldots, V_n)$ be the dilation  of Theorem $\ref{dil2}$. Then,
\begin{enumerate}
\item[(i)]
 $V$ is a pure $n$-tuple if and only if $T$ is  pure;
\item[(ii)]
 $
f_1(V)f_1(V)^*+\cdots +f_n(V)f_n(V)^*=I
$ if  and only if

$$
f_1(T)f_1(T)^*+\cdots +f_n(T)f_n(T)^*=I
$$
\end{enumerate}
\end{corollary}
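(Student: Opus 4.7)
The plan is to exploit the block-diagonal form $V_i=(B_i\otimes I_{\overline{\Delta_{f,T}\cH}})\oplus\pi(B_i)$ on $\widetilde\cK=(\cN_{f,J_{\cP\circ f}}\otimes\overline{\Delta_{f,T}\cH})\oplus\cK_\pi$. Because $f\in\cM^{||}$, each $f_i(V)$ and hence every $[f(V)]_\alpha$ inherits the same block decomposition. Two facts drive the argument. (a)~The $n$-tuple $B=(B_1,\ldots,B_n)$ lies in $\BB_f^{pure}(\cN_{f,J_{\cP\circ f}})$: since $\cN_{f,J_{\cP\circ f}}$ is co-invariant under each $M_{f_i}=f_i(M_Z)$ (as $J_{\cP\circ f}$ is a two-sided ideal in $H^\infty(\BB_f)$), one has
\[
\sum_{|\alpha|=k}[f(B)]_\alpha[f(B)]_\alpha^*h=P_{\cN_{f,J_{\cP\circ f}}}\Bigl(\sum_{|\alpha|=k}M_{f_\alpha}M_{f_\alpha}^*\Bigr)h\to 0
\]
for every $h\in\cN_{f,J_{\cP\circ f}}$, using that $M_Z\in\BB_f^{pure}(\HH^2(f))$. (b)~The equation $\sum_i f_i(\pi(B))f_i(\pi(B))^*=I_{\cK_\pi}$ makes $[f_1(\pi(B)),\ldots,f_n(\pi(B))]$ a coisometric row, so $\sum_{|\alpha|=k}[f(\pi(B))]_\alpha[f(\pi(B))]_\alpha^*=I_{\cK_\pi}$ for every $k\geq 1$.

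For (ii), since $1\in\cN_{f,J_{\cP\circ f}}$, the argument of Theorem \ref{compact2} identifies $I_{\cN_{f,J_{\cP\circ f}}}-\sum_i f_i(B)f_i(B)^*$ with the rank-one projection $P_\CC$ onto the constants. A blockwise computation then gives
\[
\sum_i f_i(V)f_i(V)^*=\bigl[(I-P_\CC)\otimes I_{\overline{\Delta_{f,T}\cH}}\bigr]\oplus I_{\cK_\pi},
\]
so equality with $I_{\widetilde\cK}$ is equivalent to $P_\CC\otimes I_{\overline{\Delta_{f,T}\cH}}=0$, i.e.\ $\overline{\Delta_{f,T}\cH}=\{0\}$, which is precisely the condition $\sum_i f_i(T)f_i(T)^*=I_\cH$.

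For (i), the forward direction uses that $\cH$ is co-invariant under the $V_i$ with $T_i^*=V_i^*|_\cH$; Corollary \ref{conseq}(ii) upgrades this to $f_i(V)^*|_\cH=f_i(T)^*$, hence by iteration $[f(V)]_\alpha^*|_\cH=[f(T)]_\alpha^*$, and purity of $V$ descends to $T$. For the converse, facts (a) and (b) imply that the first block of $\sum_{|\alpha|=k}[f(V)]_\alpha[f(V)]_\alpha^*$ tends to $0$ in SOT while the second block equals $I_{\cK_\pi}$ for every $k$; consequently $V$ is pure if and only if $\cK_\pi=\{0\}$. Assuming $T$ pure, Proposition \ref{KJT} makes $K:=K_{f,T,J_{\cP\circ f}}:\cH\to\cN_{f,J_{\cP\circ f}}\otimes\overline{\Delta_{f,T}\cH}$ isometric, and Corollary \ref{vN-variety} yields $T_\alpha T_\beta^*=K^*(B_\alpha B_\beta^*\otimes I)K$. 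Thus $(X\mapsto X\otimes I_{\overline{\Delta_{f,T}\cH}},K)$ is a Stinespring dilation of a UCP extension of $\Psi_{f,T,\cP}$; its minimality will follow from the irreducibility of $C^*(B_1,\ldots,B_n)$ on $\cN_{f,J_{\cP\circ f}}$ (Theorem \ref{compact2}) combined with the identity $(P_\CC\otimes I)Kh=1\otimes\Delta_{f,T}h$, which shows that the span of the ``$0$-th Fourier coefficients'' of $K\cH$ is dense in $\overline{\Delta_{f,T}\cH}$. By uniqueness of the minimal Stinespring dilation, $\widetilde\cK$ is unitarily equivalent to $\cN_{f,J_{\cP\circ f}}\otimes\overline{\Delta_{f,T}\cH}$, which sits inside $\cK_0$ of the Wold decomposition, so $\cK_\pi=\{0\}$ and $V$ is pure. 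The principal technical step is this Stinespring minimality check, which is what converts purity of $T$ into the collapse $\cK_\pi=\{0\}$.
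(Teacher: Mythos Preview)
Your treatment of (ii) is correct and is essentially the paper's argument, only slightly more direct: you identify $I-\sum_i f_i(B)f_i(B)^*$ with $P_\CC$ immediately, whereas the paper iterates to all $k\ge 1$ and then invokes the purity of $B$ to force $\cK_0=\{0\}$. The forward direction of (i) is also fine, though your citation of Corollary~\ref{conseq}(ii) is off---that corollary refers to the Section~3 notion of dilation, which presupposes a $*$-representation of $C^*(M_{Z_1},\ldots,M_{Z_n})$ and is not asserted for the $V$ of Theorem~\ref{dil2}. The relation $f_i(V)^*|_\cH=f_i(T)^*$ that you need follows directly from $f\in\cM^{||}$ (so $f_i(V)$ is a norm limit of polynomials in $V$) together with the co-invariance of $\cH$; this is exactly what underlies the paper's first displayed compression identity.

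The genuine gap is in the converse of (i). Uniqueness of the minimal Stinespring dilation holds only for a \emph{fixed} unital completely positive map. The dilation in Theorem~\ref{dil2} is built from an Arveson extension $\Gamma_{f,T,\cP}$ of $\Psi_{f,T,\cP}$ to all of $C^*(B_1,\ldots,B_n)$, and nothing in your argument forces this particular extension to coincide with $X\mapsto K^*(X\otimes I)K$. Indeed, the remark immediately following Theorem~\ref{dil2} adds the hypothesis $\overline{\text{span}}\{B_\alpha B_\beta^*\}=C^*(B_1,\ldots,B_n)$ precisely to make $\Gamma_{f,T,\cP}$ unique; without it your two minimal Stinespring dilations may dilate different completely positive maps and cannot be compared.

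The paper bypasses this completely by using the $*$-cyclicity of $\cH$ in $\tilde\cK$, which is part of the conclusion of Theorem~\ref{dil2}(i). Your own ingredients (a), (b), and the co-invariance relation already give
\[
\text{SOT-}\lim_{k\to\infty}\sum_{|\alpha|=k}[f(T)]_\alpha[f(T)]_\alpha^*
= P_\cH\bigl(0\oplus I_{\cK_\pi}\bigr)\big|_\cH,
\]
so $T$ is pure if and only if $\cH\subseteq\cK_0$. Since $\cK_0=\cN_{f,J_{\cP\circ f}}\otimes\overline{\Delta_{f,T}\cH}$ is reducing for each $V_i$ and $\cH$ is $*$-cyclic for $(V_1,\ldots,V_n)$, this forces $\tilde\cK=\cK_0$, hence $\cK_\pi=\{0\}$ and $V$ is pure. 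No Stinespring comparison is needed.
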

\begin{proof}
Note that
$$
\sum_{|\alpha|=k}[f(T)]_\alpha [f(T)]_\alpha^*=P_\cH \left[\begin{matrix}
\sum\limits_{|\alpha|=k}[f(B)]_\alpha [f(B)]_\alpha^*\otimes
I_{\overline{\Delta_{f,T}\cH}}&0\\0& I_{\cK_\pi}\end{matrix}\right]|_\cH
$$
and, consequently,
$$
\text{\rm SOT-}\lim_{k\to\infty}\sum_{|\alpha|=k}[f(T)]_\alpha [f(T)]_\alpha^*=P_\cH
\left[\begin{matrix}
 0&0\\0& I_{\cK_\pi}\end{matrix}\right]|_\cH.
$$
Hence, we deduce  that $T$ is a pure $n$-tuple if and only if
$\cH\perp (0\oplus \cK_\pi)$, i.e.,  $\cH\subseteq
\cN_{f,J_{\cP\circ f}}\otimes\overline{\Delta_{f,T}\cH}$. Taking
into account that  $\cN_{f,J_{\cP\circ
f}}\otimes\overline{\Delta_{f,T}\cH}$ is reducing for each operator
$V_i$, \ $i=1,\ldots, n$, and $\tilde \cK$ is the smallest reducing
subspace  for the same operators, which contains $\cH$, we draw the
conclusion that $\tilde\cK=\cN_{f,J_{\cP\circ
f}}\otimes\overline{\Delta_T\cH}$. To prove item (ii), assume that
$\sum\limits_{i=1}^n f_i(V)f_i(V)^*=I_{\tilde\cK}$. Since
$$\sum_{|\alpha|=k} [f(V)]_\alpha [f(V)]_\alpha^*=
\left[\begin{matrix} \sum\limits_{|\alpha|=k}[f(B)]_\alpha
[f(B)]_\alpha^*\otimes I_{\overline{\Delta_{f,T}\cH}}&0\\0&
I_{\cK_\pi}\end{matrix}\right],
$$
we must have  $\sum\limits_{|\alpha|=k}[f(B)]_\alpha
[f(B)]_\alpha^*\otimes I_{\overline{\Delta_{f,T}\cH}}=I_{\cK_0}$ for
any $k=1,2,\ldots$. Since   $$\text{\rm
SOT-}\lim\limits_{k\to\infty} \sum\limits_{|\alpha|=k}[f(B)]_\alpha
[f(B)]_\alpha^*=0, $$
 we deduce that $\cK_0=\{0\}$. Now, using the
proof of Theorem \ref{dil2}, we concluded that $\cG=\{0\}$ and,
therefore,  $\Delta_{f,T}=0$.  The converse is straightforward. The
proof is complete.
\end{proof}

\bigskip

\section{Beurling type theorem and commutant lifting
in noncommutative varieties}

In this section, we provide  a Beurling type theorem characterizing
the invariant subspaces under the universal $n$-tuple   associated
with  a noncommutative variety $\cV_{f,J}^{pure}(\cH)$, and a
commutant lifting theorem for $n$-tuples of operators in
$\cV_{f,J}^{pure}(\cH)$.

\begin{theorem}\label{Beur} Let $f=(f_1,\ldots, f_n)$ be an $n$-tuple of power
series with the model property.
 Let $J\neq H^\infty(\BB_f)$ be a WOT-closed two-sided ideal of the
  Hardy algebra $H^\infty(\BB_f)$ and let $(B_1,\ldots, B_n)\in \cV_{f,J}^{cnc}(\cN_{f,J})$ be the corresponding universal model.
A subspace $\cM\subseteq \cN_{f,J}\otimes \cK$ is invariant under
each operator $B_1\otimes I_\cK,\ldots, B_n\otimes I_\cK$ if and
only if there exists a Hilbert space  $\cG$ and an  operator
$\Theta: \cN_{f,J}\otimes \cG\to \cN_{f,J}\otimes \cK$ with the
following properties:
\begin{enumerate}
\item[(i)]
$\Theta$  is a partial isometry and
$$\Theta(B_i\otimes I_\cG)=(B_i\otimes I_\cK)\Theta,\quad i=1,\ldots,n.
$$
\item[(ii)]
 $\cM=\Theta\left( \cN_{f,J}\otimes \cG\right)$.
\end{enumerate}
\end{theorem}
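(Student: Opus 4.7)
The plan is to handle the two directions separately; the $(\Leftarrow)$ direction is by inspection, while the $(\Rightarrow)$ direction will be obtained by applying the constrained Poisson kernel of Proposition \ref{KJT}.  If $\Theta$ satisfies (i) and (ii), then $\cM=\Theta(\cN_{f,J}\otimes\cG)$ is closed since a partial isometry has closed range, and for every $y\in\cN_{f,J}\otimes\cG$ the intertwining gives $(B_i\otimes I_\cK)\Theta(y)=\Theta(B_i\otimes I_\cG)y\in\Theta(\cN_{f,J}\otimes\cG)=\cM$, which settles the easy direction.

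For the converse, given an invariant $\cM\subseteq\cN_{f,J}\otimes\cK$, let $\iota:\cM\hookrightarrow\cN_{f,J}\otimes\cK$ denote the inclusion and set $T_i:=(B_i\otimes I_\cK)|_\cM$, which is well-defined by invariance and satisfies $\iota T_i=(B_i\otimes I_\cK)\iota$.  I would first verify that $T:=(T_1,\ldots,T_n)$ lies in $\cV_{f,J}^{pure}(\cM)$: invariance of $\cM$ under each $B_i\otimes I_\cK$ propagates through polynomials and then through the SOT/radial limits defining $f_i(B\otimes I_\cK)$ and $\psi(B\otimes I_\cK)$ for $\psi\in H^\infty(\BB_f)$, yielding $f_i(T)=f_i(B\otimes I_\cK)|_\cM$, $g(f(T))=T$, $\|f(T)\|\le 1$, $\psi(T)=\psi(B\otimes I_\cK)|_\cM=0$ for $\psi\in J$, and
$$
\sum_{|\alpha|=k}[f(T)]_\alpha[f(T)]_\alpha^*\le \Bigl(\sum_{|\alpha|=k}[f(B\otimes I_\cK)]_\alpha[f(B\otimes I_\cK)]_\alpha^*\Bigr)\bigg|_\cM\xrightarrow[k\to\infty]{\mathrm{SOT}} 0,
$$
the right-hand side tending to $0$ since $B$, and hence $B\otimes I_\cK$, is a pure element of $\cV_{f,J}$.

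Proposition \ref{KJT} then provides an isometry $K_{f,T,J}:\cM\to\cN_{f,J}\otimes\cD_{f,T}$ satisfying $K_{f,T,J}T_i^*=(B_i^*\otimes I_{\cD_{f,T}})K_{f,T,J}$, equivalently $K_{f,T,J}^*(B_i\otimes I_{\cD_{f,T}})=T_iK_{f,T,J}^*$.  Setting $\cG:=\cD_{f,T}$ and $\Theta:=\iota\,K_{f,T,J}^*:\cN_{f,J}\otimes\cG\to\cN_{f,J}\otimes\cK$, multi-analyticity is the short computation
$$
\Theta(B_i\otimes I_\cG)=\iota\,T_i\,K_{f,T,J}^*=(B_i\otimes I_\cK)\,\iota\,K_{f,T,J}^*=(B_i\otimes I_\cK)\Theta.
$$
The operator $\Theta$ is a partial isometry, being the composition of the isometric inclusion $\iota$ with the coisometry $K_{f,T,J}^*$; its initial space is $\mathrm{range}(K_{f,T,J})$ and its final space is $\iota(\cM)=\cM$, so $\Theta(\cN_{f,J}\otimes\cG)=\cM$, as required.

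The main obstacle is the verification $T\in\cV_{f,J}^{pure}(\cM)$: one must check that restriction to the invariant subspace $\cM$ is compatible with the power-series definitions of $f_i$, the identity $g(f(\cdot))=\mathrm{id}$, and the $H^\infty(\BB_f)$-functional calculus used to interpret $\psi(T)$ for $\psi\in J$.  Once this compatibility is in place, the definition $\Theta:=\iota\,K_{f,T,J}^*$ and all of its properties are forced by the intertwining satisfied by the constrained Poisson kernel.
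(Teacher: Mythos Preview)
Your proof is correct and takes a genuinely different route from the paper's.  The paper proceeds by \emph{lifting} to the full space: since $\cN_{f,J}\otimes\cK$ is coinvariant under $M_{Z_i}\otimes I_\cK$, the subspace
\[
\cE:=(\cM_{f,J}\otimes\cK)\oplus\cM=(\HH^2(f)\otimes\cK)\ominus\bigl[(\cN_{f,J}\otimes\cK)\ominus\cM\bigr]
\]
is invariant under each $M_{Z_i}\otimes I_\cK$, so the unconstrained Beurling theorem for $\BB_f$ (Theorem~5.2 of \cite{Po-multi}) produces an inner multi-analytic operator $\Psi:\HH^2(f)\otimes\cG\to\HH^2(f)\otimes\cK$ with $\cE=\Psi(\HH^2(f)\otimes\cG)$; the desired $\Theta$ is then the compression $P_{\cN_{f,J}\otimes\cK}\Psi|_{\cN_{f,J}\otimes\cG}$, and one checks $P_\cM=\Theta\Theta^*$.

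You instead stay inside $\cN_{f,J}$ throughout: restrict $B\otimes I_\cK$ to $\cM$, verify the resulting tuple $T$ is a pure element of $\cV_{f,J}$, and define $\Theta$ directly as the adjoint of its constrained Poisson kernel composed with the inclusion.  This avoids invoking the unconstrained Beurling theorem altogether and is self-contained once Proposition~\ref{KJT} (together with Corollary~\ref{vN-variety}) is in hand.  The trade-off is exactly the obstacle you flag: you must confirm that restriction to an invariant subspace is compatible with the SOT/radial limits defining $f_i(\cdot)$, the relation $g(f(\cdot))=\mathrm{id}$, and the $H^\infty(\BB_f)$-functional calculus for $\psi\in J$.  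These checks are routine (invariance passes to SOT limits of polynomials), but the paper's lift-and-compress argument sidesteps them entirely.  Your construction also identifies $\cG$ concretely as the defect space $\cD_{f,T}$, which the paper's approach does not.
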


\begin{proof}
First, note that the subspace $\cN_{f,J}\otimes K$ is invariant
under each operator $M_{Z_i}^*\otimes I_\cK$, $i=1,\ldots, n$, and
$$(M_{Z_i}^*\otimes I_\cK)|_{\cN_{f,J}\otimes \cK}=B_i^*\otimes I_\cK, \quad i=1,\ldots,n. $$
Since the subspace $[\cN_{f,J}\otimes \cK]\ominus \cM$  is invariant under
 $B_i^*\otimes I_\cK$, \ $i=1,\ldots, n$,  it is also invariant under
   each  operator $M_{Z_i}^*\otimes I_\cK$.
Consequently, the subspace
\begin{equation}\label{E}
\cE:=[\HH^2(f)\otimes \cK]\ominus\{[\cN_{f,J}\otimes \cK]\ominus \cM\}=
[\cM_{f,J}\otimes \cK]\oplus \cM
\end{equation}
is invariant under $M_{Z_i}\otimes I_\cK$, \ $i=1,\ldots, n$, where
$\cM_{f,J}:=\HH^2(f)\ominus \cN_{f,J}$. Using the Beurling type
characterization of the invariant subspaces under $M_{Z_1},\ldots,
M_{Z_n}$ (see Theorem 5.2 from \cite{Po-multi}),   we find a Hilbert
space $\cG$ and  an isometric  operator $\Psi: \HH^2(f)\otimes\cG\to
\HH^2(f)\otimes\cK$ such that $\Psi(M_{Z_i}\otimes
I_\cG)=(M_{Z_i}\otimes I_\cK)$ for $i=1,\ldots,n$ and
$$
\cE=\Psi[\HH^2(f)\otimes \cG].
$$
Since $\Psi$ is an isometry, we have $ P_\cE=\Psi\Psi^*, $ where
$P_\cE$ is the orthogonal projection of $\HH^2(f)\otimes \cK$ onto
$\cE$. Note that the subspace $\cN_{f,J}\otimes \cK$ is invariant
under $\Psi^*$. Setting $\Theta:=P_{\cN_{f,J}\otimes \cK}
\Psi|_{\cN_{f,J}\otimes \cG}$, we have
$$
P_{\cN_{f,J}\otimes \cK}P_\cE|_{\cN_{f,J}\otimes
\cK}=\Theta\Theta^*.
$$
 Hence, and using relation \eqref{E},  we deduce that
$ P_\cM=\Theta\Theta^* $, where $P_\cM$ is the orthogonal projection
of $\cN_{f,J}\otimes \cK$  onto $\cM$. Therefore $\Theta$ is a
partial isometry and $\cM=\Theta\left[ \cN_{f,J}\otimes \cG\right]$.
 The proof
is complete.
\end{proof}

  We recall that, due to Theorem \ref{model}, any $n$-tuple $(T_1,\ldots, T_n)$
   in the noncommutative variety $\cV_{f,J}^{pure}(\cH)$
  is unitarily equivalent
   to the compression of $[B_1\otimes I_\cK,\ldots, B_n\otimes I_\cK]$ to a co-invariant subspace $\cE$ under each operator $B_i\otimes I_\cK$, $i=1,\ldots, n$. Therefore, we have
$$T_i=P_\cE(B_i\otimes I_\cK)|_\cE,\qquad i=1,\ldots,n.
$$
The following result is  a commutant lifting theorem for $n$-tuples
of operators in the noncommutative variety $\cV_{f,J}^{pure}(\cH)$.

\begin{theorem}\label{CLT2}  Let $f=(f_1,\ldots, f_n)$ be an $n$-tuple of power
series with the model property. Let $J\neq H^\infty(\BB_f)$ be a
WOT-closed two-sided ideal of  the  Hardy algebra
$H^\infty(\BB_f)$  and let $(B_1,\ldots, B_n)$  and  be the
corresponding universal model acting on $\cN_{f,J}$.  For each
$j=1,2$, let $\cK_j$ be a Hilbert space and $\cE_j\subseteq
\cN_{f,J}\otimes \cK_j$ be a co-invariant subspace  under each
operator $B_1\otimes I_\cK,\ldots, B_n\otimes I_\cK$. If $X:\cE_1\to
\cE_2$ is a bounded operator such that
\begin{equation*}
X[P_{\cE_1}(B_i\otimes I_{\cK_1})|_{\cE_1}]=[P_{\cE_2}(B_i\otimes
I_{\cK_2})]|_{\cE_2}X,\qquad i=1,\ldots,n,
\end{equation*}
then there exists an operator $G:\cN_{f,J}\otimes\cK_1\to
\cN_{f,J}\otimes \cK_2$  with the following properties:
\begin{enumerate}
\item[(i)]
 $G(B_i\otimes I_{\cK_1})=(B_i\otimes I_{\cK_2})G$ for  $i=1,\ldots,n$;
\item[(ii)]
$G^*\cE_2\subseteq \cE_1$,
$
G^*|\cE_2=X^*$,  and
$\|G\|=\|X\|$.
\end{enumerate}
\end{theorem}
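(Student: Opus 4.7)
The strategy is to dilate the problem from the variety $\cN_{f,J}$ up to the ambient Hilbert space $\HH^2(f)$, apply the commutant lifting theorem for the universal model $(M_{Z_1},\ldots,M_{Z_n})$ of the noncommutative domain $\BB_f$, and then compress back to $\cN_{f,J}$ using the ideal structure of $J$.

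First, because $J$ is a two-sided WOT-closed ideal of $H^\infty(\BB_f)$ and left multiplication by each $M_{Z_i}$ carries $J$ into $J$, the subspace $\cM_{f,J}=\overline{J\,\HH^2(f)}$ is invariant under $M_{Z_i}$, so $\cN_{f,J}$ is co-invariant and $B_i^*=M_{Z_i}^*|_{\cN_{f,J}}$. Tensoring by $I_{\cK_j}$, $\cE_j\subseteq \cN_{f,J}\otimes\cK_j$ is co-invariant under $M_{Z_i}\otimes I_{\cK_j}$ and the compressions agree,
$$P_{\cE_j}(M_{Z_i}\otimes I_{\cK_j})|_{\cE_j}=P_{\cE_j}(B_i\otimes I_{\cK_j})|_{\cE_j},$$
so the hypothesized intertwining for $X$ holds equally with $M_{Z_i}\otimes I_{\cK_j}$ in place of $B_i\otimes I_{\cK_j}$. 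Invoking the commutant lifting theorem for $(M_{Z_1},\ldots,M_{Z_n})$ acting on $\HH^2(f)$ (from \cite{Po-multi}) produces a bounded operator $\tilde G:\HH^2(f)\otimes\cK_1\to \HH^2(f)\otimes\cK_2$ satisfying $\tilde G(M_{Z_i}\otimes I_{\cK_1})=(M_{Z_i}\otimes I_{\cK_2})\tilde G$, $\tilde G^*|_{\cE_2}=X^*$, and $\|\tilde G\|=\|X\|$.

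The essential step is to show that $\tilde G(\cM_{f,J}\otimes\cK_1)\subseteq \cM_{f,J}\otimes\cK_2$. Since $\tilde G$ is multi-analytic, a standard WOT-continuity argument (taking WOT limits of polynomials in $M_{Z_1},\ldots,M_{Z_n}$) extends the intertwining to $\tilde G(\varphi\otimes I_{\cK_1})=(\varphi\otimes I_{\cK_2})\tilde G$ for every $\varphi\in H^\infty(\BB_f)$. Specializing to $\psi\in J$, $h\in\HH^2(f)$, $k\in\cK_1$ gives
$$\tilde G(\psi h\otimes k)=(M_\psi\otimes I_{\cK_2})\tilde G(h\otimes k)\in M_\psi\bigl(\HH^2(f)\bigr)\otimes\cK_2\subseteq \cM_{f,J}\otimes\cK_2,$$
and the claim follows by linearity and continuity. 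Equivalently, $\tilde G^*(\cN_{f,J}\otimes\cK_2)\subseteq \cN_{f,J}\otimes\cK_1$.

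Now set $G:=P_{\cN_{f,J}\otimes\cK_2}\,\tilde G|_{\cN_{f,J}\otimes\cK_1}$; the preceding inclusion gives $G^*=\tilde G^*|_{\cN_{f,J}\otimes\cK_2}$, whence $G^*|_{\cE_2}=\tilde G^*|_{\cE_2}=X^*$, $G^*\cE_2\subseteq\cE_1$, and $\|G\|=\|X\|$. To verify $G(B_i\otimes I_{\cK_1})=(B_i\otimes I_{\cK_2})G$, for $u\in\cN_{f,J}\otimes\cK_1$ decompose $(M_{Z_i}\otimes I_{\cK_1})u=(B_i\otimes I_{\cK_1})u+w$ with $w\in\cM_{f,J}\otimes\cK_1$, apply $\tilde G$ and use both the intertwining for $\tilde G$ and $\tilde G(w)\in\cM_{f,J}\otimes\cK_2$; projecting onto $\cN_{f,J}\otimes\cK_2$ and exploiting that $M_{Z_i}\otimes I_{\cK_2}$ preserves $\cM_{f,J}\otimes\cK_2$ produces the desired equality. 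The main obstacle is the availability of a commutant lifting theorem for $(M_{Z_1},\ldots,M_{Z_n})$ on $\HH^2(f)$ under the mere model-property hypothesis (weaker than the $f\in\cM_{rad}\cap\cM^{||}$ needed in Theorem \ref{CLT}); if that stronger form is unavailable, one can route through the unitary equivalence $M_{f_i}\leftrightarrow S_i$ and apply the noncommutative commutant lifting theorem for the row isometry $(S_1,\ldots,S_n)$ on the Fock space, transferring the resulting multi-analytic lift back to $\tilde G$ via the identity $M_{Z_i}=g_i(M_{f_1},\ldots,M_{f_n})$ supplied by the model property.
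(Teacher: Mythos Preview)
Your proof is correct and follows essentially the same route as the paper: lift the problem from $\cN_{f,J}$ to $\HH^2(f)$, invoke the commutant lifting theorem for the universal model $(M_{Z_1},\ldots,M_{Z_n})$ from \cite{Po-multi} (Theorem~9.1 there), and compress the resulting multi-analytic operator back to $\cN_{f,J}$. The paper simply asserts the key inclusion $\tilde G^*(\cN_{f,J}\otimes\cK_2)\subseteq \cN_{f,J}\otimes\cK_1$, whereas you supply the argument via the WOT-continuous extension of the intertwining to all of $H^\infty(\BB_f)$ and the ideal property of $J$; this is a welcome clarification. Your closing worry about hypotheses is unfounded in this context: the commutant lifting result being invoked is not Theorem~\ref{CLT} of the present paper (which indeed needs $f\in\cM_{rad}\cap\cM^{||}$) but the version for \emph{pure} $n$-tuples in $\BB_f$ established in the companion paper \cite{Po-multi}, which requires only the model property and is precisely what your Fock-space detour would reproduce.
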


\begin{proof}
Note that the subspace  $\cN_{f,J}\otimes \cK_j$
is  invariant under  each operator $M_{Z_i}^*\otimes I_{\cK_j}$, \
$i=1,\ldots, n$,  and
$$
(M_{Z_i}^*\otimes I_{\cK_j})|_{\cN_{f,J}\otimes \cK_j}=B_i^*\otimes
I_{\cK_j},\quad  i=1,\ldots, n.
$$
Since $\cE_j\subseteq \cN_{f,J}\otimes \cK_j$ is invariant   under
$B_i^*\otimes I_{\cK_j}$ it is also invariant under $M_{Z_i}^*\otimes
I_{\cK_j}$  and
$$
(M_{Z_i}^*\otimes I_{\cK_j})|_{\cE_j}=  (B_i^*\otimes
I_{\cK_j})|_{\cE_j}, \quad i=1,\ldots, n.
$$
Consequently,  the intertwining relation  in the hypothesis  implies
\begin{equation*}
XP_{\cE_1}(M_{Z_i}\otimes I_{\cK_1})|_{\cE_1}=P_{\cE_2}(M_{Z_i}\otimes
I_{\cK_2})|_{\cE_2}X,\quad i=1,\ldots,n.
\end{equation*}
We remark that, for each $j=1,2$, the $n$-tuple $(M_{Z_1} \otimes
I_{\cK_j},\ldots, M_{Z_n} \otimes I_{\cK_j})$ is  a dilation of the
$n$-tuple
$$
[P_{\cE_j}(M_{Z_1}\otimes I_{\cK_j})|_{\cE_j},\ldots,
P_{\cE_j}(M_{Z_n}\otimes I_{\cK_j})|_{\cE_j}].
$$
 Applying Theorem 9.1 from \cite{Po-multi},
 we find an
operator  $\Phi: \HH^2(f)\otimes\cK_1\to \HH^2(f)\otimes\cK_2$ with
the following properties:
\begin{enumerate}
\item[(i)]
$\Phi(M_{Z_i}\otimes I_{\cK_1})=(M_{Z_i}\otimes I_{\cK_2})\Phi$ for
$i=1,\ldots,n$;
\item[(ii)]
$\Phi^*\cE_2\subseteq \cE_1$,
$
\Phi^*|\cE_2=X^*$,  and
$\|\Phi\|=\|X\|$.
\end{enumerate}
 Set $G:=P_{\cN_{f,J}\otimes \cK_2} \Phi|_{\cN_J\otimes \cK_1}$.
Since $\Phi^*(\cN_{f,J}\otimes \cK_2)\subseteq \cN_{f,J}\otimes
\cK_1$,  the subspace  $\cN_{f,J}\otimes \cK_j$ is  invariant under
each operator $M_{Z_1}^*\otimes I_{\cK_j},\ldots, M_{Z_n}^*\otimes
I_{\cK_j}$,  and $\cE_j\subseteq \cN_{f,J}\otimes \cK_j$,
 the relations above  imply  $G(B_i\otimes I_{\cK_1})=(B_i\otimes I_{\cK_2})G$ for  $i=1,\ldots,n$,
$
G^*\cE_2\subseteq \cE_1$,  and
$G^*|_{\cE_2}=X^*$.
Hence,  we deduce that
$
\|X\|\leq \|G\|\leq \|\Phi\|=\|X\|.
$
Therefore, $\|G\|=\|X\|$.
 The proof is complete.
\end{proof}

{\bf The commutative case.}  Let $f=(f_1,\ldots, f_n)$ be an
$n$-tuple of power series with the model property. Let  $J_c$ be the
$WOT$-closed two-sided ideal of the Hardy algebra $H^\infty(\BB_f)$
generated by the commutators
$$ M_{Z_i}M_{Z_j}-M_{Z_j}M_{Z_i},\qquad i,j=1,\ldots, n.
$$
According to \cite{Po-multi},  the subspace $\cN_{f,J_c}$ coincides
with  the symmetric Hardy  space associated with $\BB_f$,
$\HH^2_s(f)$,  which can be identified with the Hilbert space
$H^2(\BB_{f}^{<}(\CC))$ of holomorphic functions on $\BB_f^<(\CC)$,
namely, the reproducing kernel Hilbert space with reproducing kernel
$K_f:\BB_f^<(\CC)\times \BB_f^<(\CC)\to \CC$ defined by
$$
K_f(\mu,\lambda):=\frac{1}{1-\sum_{ i=1}^n   f_i(\mu)
\overline{f_i(\lambda)}},\qquad \lambda,\mu\in \BB_f^<(\CC).
$$
We recall that
$$\BB_f^<(\CC):=\{\lambda=(\lambda_1,\ldots,
\lambda_n)\in \CC^n : \  \lambda=g(f(\lambda)) \  \text{ and } \
\sum_{i=1}^n |f_i(\lambda)|^2<1\}=g({\bf B}_n),
$$
where  ${\bf B}_n:=\{(z_1,\ldots, z_n)\in \CC^n: \ \sum_{i=1}^n
|z_i|^2<1\}$ and $g=(g_1,\ldots, g_n)$ is the inverse of $f$ with
respect to the composition.
 The algebra $P_{\HH_s^2(f)} H^\infty(\BB_f)|_{\HH_s^2(f)}$  coincides with the
 $WOT$-closed algebra  generated by  the operators
  $L_i:=P_{\HH_s^2(f)} M_{Z_i}|_{\HH_s^2(f)}$, $i=1,\dots, n$, and
 can be identified
 with the algebra of  all multipliers of the  Hilbert space $H^2(\BB_f^<(\CC))$.
 Under this identification, the  operators $L_1,\ldots, L_n$ become
 the multiplication operators $M_{z_1},\ldots, M_{z_n}$ by the coordinate functions $z_1,\ldots, z_n$.

Under the above-mentioned identifications, if $T:=(T_1,\ldots,
T_n)\in \BB_f(\cH)$ is  such that
$$
T_iT_j=T_jT_i, \qquad i,j=1,\ldots, n,
$$
then
 the  characteristic
function of $T$  is the multiplier
$M_{\Theta_{f,J_c,T}}:\HH^2(\BB_f^<(\CC))\otimes \cD_{f,T^*}\to
\HH^2(\BB_f^<(\CC))\otimes \cD_{f,T} $ defined by    the
operator-valued analytic function
\begin{equation*}
\begin{split}
\Theta_{f,J_c,T}(z):=
  - f(T)+
 \Delta_{f,T}\left(I-\sum_{i=1}^n f_i(z) f_i(T)^*\right)^{-1}
\left[f_1(z)I_\cH,\ldots, f_n(z) I_\cH \right] \Delta_{f,T^*},\qquad
z\in \BB_f^<(\CC).
\end{split}
\end{equation*}
We remark that all   the results  of  the last three sections can be
written in this commutative setting.

\bigskip

       %


\begin{thebibliography}{99}




\bibitem{Arv}    {\sc W.B.~Arveson},
      {Subalgebras of $C^*$-algebras III: Multivariable operator theory,}
{\it Acta Math.} {\bf 181} (1998), 159--228.





 \bibitem{BT1} {\sc C.~Benhida, and D.~Timotin},
Characteristic functions for multicontractions and automorphisms
 of the unit ball, {\it  Integr. Equat. Oper.Th.} {\bf 57} (2007), no. 2, 153--166.







\bibitem{Be} {\sc A.~Beurling}
{ On two problems concerning linear transformations in Hilbert
space}, {\it  Acta Math.} {\bf 81}, (1948), 239--251.



\bibitem{BES1} {\sc  T.~Bhattacharyya, J.~Eschmeier, and J.~Sarkar},
Characteristic function of a   pure commuting  contractive tuple,
{\it Integral Equation Operator Theory} {\bf 53} (2005), no.1,
23--32.


\bibitem{B} {\sc J.~W.~Bunce},
 Models for n-tuples of noncommuting operators, {\it J. Funct. Anal.}
  {\bf 57}(1984), 21--30.












 \bibitem{F} {\sc  A.~E.~Frazho},
 Models for noncommuting operators, {\it J. Funct. Anal.}
  {\bf 48} (1982), 1--11.



\bibitem{MuSo1}  {\sc P.S.~Muhly and  B.~Solel},
 Tensor algebras over $C^*$-correspondences: representations,
dilations, and $C^*$-envelopes, {\it J. Funct. Anal.}
 {\bf 158} (1998),  389--457.


  \bibitem{MuSo2}  {\sc P.S.~Muhly and  B.~Solel},
Hardy algebras, $W^*$-correspondences and interpolation theory, {\it
Math. Ann.} {\bf 330} (2004),  353--415.

\bibitem{MuSo3}  {\sc P.S.~Muhly and  B.~Solel},
Canonical models for representations of Hardy algebras,
{\it Integral Equations Operator Theory} {\bf 53} (2005),  no. 3, 411–452.












\bibitem{Po-isometric} {\sc G.~Popescu}, Isometric dilations for infinite
sequences of noncommuting operators, {\it Trans. Amer. Math. Soc.}
{\bf 316} (1989), 523--536.


\bibitem{Po-charact} {\sc G.~Popescu}, Characteristic functions for infinite
sequences of noncommuting operators, {\it J. Operator Theory} {\bf
22} (1989), 51--71.


      \bibitem{Po-von} {\sc G.~Popescu},
{Von Neumann inequality for $(B(H)^n)_1$,}
      {\it Math.  Scand.} {\bf 68} (1991), 292--304.





      \bibitem{Po-funct} {\sc G.~Popescu},
      {Functional calculus for noncommuting operators,}
       {\it Michigan Math. J.} {\bf 42} (1995), 345--356.



      \bibitem{Po-analytic} {\sc G.~Popescu},
      {Multi-analytic operators on Fock spaces,}
      {\it Math. Ann.} {\bf 303} (1995), 31--46.



      \bibitem{Po-disc}  {\sc G.~Popescu},
      {Noncommutative disc algebras and their representations,}
       {\it Proc. Amer. Math. Soc.} {\bf 124} (1996),  2137--2148.



\bibitem{Po-interpo} {\sc G.~Popescu},
  Interpolation problems in several variables, {\it  J. Math Anal. Appl.}
 {\bf 227}  (1998),    227--250.



      \bibitem{Po-poisson} {\sc G.~Popescu},
     {Poisson transforms on some $C^*$-algebras generated by isometries,}
       {\it J. Funct. Anal.} {\bf 161} (1999),  27--61.




\bibitem{Po-curvature} {\sc  G.~Popescu},
  Curvature invariant for Hilbert modules over free semigroup algebras,
   {\it Adv. Math.}
 {\bf 158} (2001), 264--309.





\bibitem{Po-entropy} {\sc G.~Popescu},
Entropy and Multivariable Interpolation,
 {\it Mem. Amer. Math. Soc.}
{\bf 184} (2006), No.868, vi+83 pp.


\bibitem{Po-varieties} {\sc  G.~Popescu},
Operator theory on noncommutative varieties, {\it Indiana Univ.
Math.~J.} {\bf 55} (2) (2006), 389--442.


      \bibitem{Po-holomorphic} {\sc G.~Popescu},
      {Free holomorphic functions on the unit ball of $B(\cH)^n$},
      {\it J. Funct. Anal.}  {\bf 241} (2006), 268--333.





\bibitem{Po-varieties2} {\sc  G.~Popescu},
Operator theory on noncommutative varieties II, {\it Proc. Amer.
Math. Soc.} {\bf 135} (2007), no. 7,  2151--2164.





     \bibitem{Po-unitary} {\sc G.~Popescu},
      {Unitary invariants in multivariable operator theory},
      {\it Mem. Amer. Math. Soc.},  {\bf 200} (2009), No.941, vi+91 pp.



\bibitem{Po-automorphism} {\sc G.~Popescu},
{ Free holomorphic automorphisms of the unit ball of $B(\cH)^n$},
{\it J. Reine Angew. Math.} {\bf 638} (2010), 119-168.




\bibitem{Po-holomorphic2} {\sc G.~Popescu},
Free holomorphic functions on the unit ball of $B(H)^n$, II, {\it J.
Funct. Anal.}  {\bf 258} (2010), no. 5, 1513--1578.






\bibitem{Po-domains} {\sc G.~Popescu},
Operator theory on noncommutative domains, {\it Mem.  Amer. Math.
Soc.} {\bf 205} (2010), no. 964, vi+124.



\bibitem{Po-multi} {\sc G.~Popescu},
Free biholomorphic functions and operator model theory,  preprint.







\bibitem{SzF-book} {\sc B.~Sz.-Nagy and C.~Foia\c{s}}, {\em Harmonic
Analysis of Operators on Hilbert Space}, North Holland, New York
1970.





      \bibitem{von}  {\sc J.~von Neumann},
      {Eine Spectraltheorie f\"ur allgemeine Operatoren eines unit\"aren
      Raumes,}
      {\it Math. Nachr.} {\bf 4} (1951), 258--281.




       \end{thebibliography}
      \end{document}